\definecolor{tocolor}{rgb}{.1,.1,.1}
\definecolor{urlcolor}{rgb}{.2,.2,.6}
\definecolor{linkcolor}{rgb}{.1,.1,.5}
\definecolor{citecolor}{rgb}{.4,.2,.1}
\newcommandx{\thdef}[2]{
	\newaliascnt{#1}{theorem}  
	\newtheorem{#1}[#1]{#2}
	\aliascntresetthe{#1}  
	\newtheorem*{#1*}{#2}
	\expandafter\newcommand\expandafter{\csname #1autorefname\endcsname}{#2}
}
\newtheorem*{rep@theorem}{\rep@title}
\newcommand{\newreptheorem}[2]{%
\newenvironment{rep#1}[1]{%
 \def\rep@title{#2 \ref{##1}}%
 \begin{rep@theorem}}%
 {\end{rep@theorem}}}
\newtheorem{theorem}{Theorem}[section]
\theoremstyle{definition}
\theoremstyle{remark}
\theoremstyle{remark}
\newenvironment{example}
{\begin{ex}}%
{\hfill $\blacksquare$\end{ex}}
\newcommand{\spc}[1]{\mathsf{#1}} 
\newcommand{\shf}[1]{\mathcal{#1}} 
\newcommand{\rbrac}[1]{\left(#1\right)} 
\newcommandx{\fn}[2][2=]{#1\ifthenelse{\equal{#2}{}}{}{\!\rbrac{{#2}}}} 
\newcommandx{\id}[2][2=]{\fn{{\rm id}_{#1}}[#2]} 
\newcommand{\ext}[2][\bullet]{\spc{\Lambda}^{#1}{#2}} 
\newcommandx{\End}[2][1=]{\fn{\spc{End}_{#1}}[#2]} 
\newcommandx{\Hom}[2][1=]{\fn{\spc{Hom}_{#1}}[#2]} 
\newcommandx{\Aut}[2][1=]{\fn{\spc{Aut}_{#1}}[#2]} 
\newcommandx{\image}[1]{\fn{\spc{img}}[#1]} 
\renewcommandx{\ker}[1]{\fn{\spc{ker}}[#1]} 
\newcommandx{\rank}[1]{\fn{\mathrm{rank}}[#1]} 
\newcommandx{\ann}[1]{\fn{\spc{ann}}[#1]} 
\newcommandx{\hlgy}[3][1=\bullet,3=]{\spc{H}_{#1}^{#3}\!\rbrac{{#2}}} 
\newcommandx{\cohlgy}[3][1=\bullet,3=]{\spc{H}^{#1}_{#3}\!\rbrac{{#2}}} 
\newcommandx{\chow}[3][1=\bullet,3=]{\spc{A}^{#1}_{#3}\!\rbrac{{#2}}} 
\newcommandx{\Ext}[3][1=\bullet,3=]{\fn{\spc{Ext}^{#1}_{#3}}[{#2}]} 
\newcommandx{\Tor}[3][1=\bullet,3=]{\fn{\spc{Tor}^{#1}_{#3}}[{#2}]} 
\newcommandx{\Pic}[1]{\fn{\spc{Pic}}[{#1}]} 
\newcommandx{\chernalg}[2][1=\bullet]{\fn{\spc{Chern}^{#1}}[{#2}]} 
\newcommandx{\chern}[2][1=]{\fn{c_{#1}}[#2]} 
\newcommandx{\ch}[2][1=]{\fn{\mathrm{ch}_{#1}}[{#2}]} 
\newcommandx{\sKer}[2][1=]{ \fn{ \shf{K}er_{#1}}[{#2}] } 
\newcommandx{\sHom}[2][1=]{ \fn{ \shf{H}om_{#1}}[{#2}] } 
\newcommandx{\sEnd}[2][1=]{ \fn{ \shf{E}nd_{#1}}[{#2}] } 
\newcommandx{\sExt}[3][1=\bullet,3=]{\fn{\shf{E}xt^{#1}_{#3}}[{#2}]} 
\newcommandx{\sTor}[3][1=\bullet,3=]{\fn{\shf{T}or^{#1}_{#3}}[{#2}]} 
\newcommandx{\forms}[2][1=\bullet]{\Omega^{#1}_{#2}} 
\newcommandx{\can}[1][1=]{\omega_{#1}} 
\newcommandx{\acan}[1][1=]{\omega_{#1}^{-1}} 
\newcommandx{\tshf}[1]{\shf{T}_{#1}} 
\newcommandx{\mvect}[2][1=\bullet]{ \ext[#1]{\tshf{#2}} }
\newcommandx{\der}[2][1=\bullet]{\mathscr{X}^{#1}_{#2}} 
\newcommandx{\sJet}[3][1=,2=]{\shf{J}^{#1}_{#2}#3} 
\newcommandx{\tb}[2][1=]{\spc{T}_{\!#1}{#2}} 
\newcommandx{\ctb}[2][1=]{\spc{T}_{\!#1}^*{#2}} 
\newcommandx{\lie}[2][2=]{\fn{\mathscr{L}_{#1}}[#2]} 
\newcommandx{\hook}[2][2=]{\fn{i_{#1}}[#2]} 
\newcommand{\Bl}[2]{\fn{{\mathrm{Bl}_{#2}}}[{#1}]}
\newcommand{\Rep}[1]{\fn{\cat{R}ep}[#1]}
\newcommand{\thickbar}{\mathpalette\@thickbar}
\newcommand{\@thickbar}[2]{{#1\mkern1.5mu\vbox{
  \sbox\z@{$#1\mkern-1mu#2\mkern-1mu$}%
  \sbox\tw@{$#1\overline{#2}$}%
  \dimen@=\dimexpr\ht\tw@-\ht\z@-.6\p@\relax
  \hrule\@height.4\p@ 
  \vskip1\p@
  \hrule\@height.4\p@ 
  \vskip\dimen@
  \box\z@}\mkern1.5mu}
}
\newcommand{\Res}{\mathrm{Res}}
\def\Rep{\text{Rep}}
\def\ker{\text{ker}}
\def\im{\text{im}}
\def\End{\text{End}}
\def\log{\text{log}}
\newcommand{\bb}[1]{\mathbb{#1}}
\renewcommand{\cal}[1]{\mathcal{#1}}
\numberwithin{equation}{section}
\newtheoremstyle{parag}
  {\topsep}   
  {\topsep}   
  {}  
  {}       
  {\bfseries} 
  {.}         
  { } 
  {}          
\theoremstyle{parag}
\def\@cite#1#2{{\normalfont[{#1\if@tempswa , #2\fi}]}}
\renewcommand{\Pic}{\mathrm{Pic}}
\def\rep{\text{Rep}}
\def\Aut{\text{Aut}}
\def\Bl{\text{Bl}}
\begin{document}

\title{\vspace{-4em} \huge Lie groupoids and logarithmic connections}
\date{}

\author{
Francis Bischoff\thanks{Exeter College and Mathematical Institute, University of Oxford; {\tt francis.bischoff@maths.ox.ac.uk }}
}
\maketitle

\abstract{
Using tools from the theory of Lie groupoids, we study the category of logarithmic flat connections on principal $G$-bundles, where $G$ is a complex reductive structure group. Flat connections on the affine line with a logarithmic singularity at the origin are equivalent to representations of a groupoid associated to the exponentiated action of $\mathbb{C}$. We show that such representations admit a canonical Jordan-Chevalley decomposition and use this to give a functorial classification. Flat connections on a complex manifold with logarithmic singularities along a hypersurface are equivalent to representations of a twisted fundamental groupoid. Using a Morita equivalence, whose construction is inspired by Deligne's notion of paths with tangential basepoints, we prove a van Kampen type theorem for this groupoid. This allows us to show that the category of representations of the twisted fundamental groupoid can be localized to the normal bundle of the hypersurface. As a result, we obtain a functorial Riemann-Hilbert correspondence for logarithmic connections in terms of generalized monodromy data. 
 
}
\renewcommand{\contentsname}{}
\setcounter{tocdepth}{1}
\tableofcontents

\section*{Introduction}
This paper is about the classification of flat connections on principal $G$-bundles with logarithmic singularities, where $G$ is a connected complex reductive group. These connections are the differential geometric generalizations of linear ordinary differential equations with Fuchsian singularities. In their most simple incarnation, these are differential equations of the form 
\[
z \frac{ds}{dz} = A(z) s(z).
\]
In this equation, $A(z)$ is a matrix of holomorphic functions on the affine line, or more generally, a family of holomorphically varying elements of a Lie algebra. Two such equations are considered to be equivalent when they are related by a holomorphic gauge transformation. There has been much work in the past devoted to establishing standard normal forms for these differential equations and in describing their classification under holomorphic gauge transformations \cite{hukuhara1937proprietes, alllevelt1961hypergeometric, turrittin1955convergent, gantmacher1959theory, babbitt1983formal, kleptsyn2004analytic, boalch2011riemann}. In the setting of complex vector bundles with logarithmic connections, there are classification results due to Deligne \cite[Appendix C]{esnault1986logarithmic} and Simpson \cite{simpson1990harmonic}, as well as Ogus \cite{ogus2003logarithmic} in the context of logarithmic geometry. If we allow the gauge transformations to be meromorphic at the singularity locus, then we are in the setting of connections with regular singularities, and these have been much studied. Of particular importance is the work of Deligne \cite{deligne2006equations} (see also \cite{babbitt1983formal}). In this paper, we extend existing results by giving a completely functorial classification of logarithmic connections on principal $G$-bundles over arbitrary complex manifolds in terms of generalized monodromy data. Part of the novelty of our approach is that it draws heavily from the theory of Lie groupoids. This allows us to give new proofs of known results which completely circumvent the use of power series, and which do not rely on results in analysis beyond the classic existence and uniqueness theorem for first order ordinary differential equations.

The starting point for our approach is the perspective, explained in \cite{gualtieri2018stokes}, that \emph{singular} flat connections can be fruitfully studied as \emph{smooth} flat algebroid connections for an appropriately chosen Lie algebroid. Such a Lie algebroid may be integrated to a Lie groupoid, which is a space on which the sections of the Lie algebroid are realized as vector fields tangent to a foliation. If we then pull back a singular connection to the groupoid it becomes a smooth flat connection defined along the foliation. It may therefore be integrated using the basic existence and uniqueness results for ordinary differential equations. The flat sections thus obtained are smooth and single-valued, and give rise to a representation of the Lie groupoid. This representation may in turn be used to recover the flat sections of the original singular connection, but these sections are typically singular and multivalued. In this way, the groupoid provides the natural domain of definition for the solutions to singular flat connections. From this vantage point, the fact that the solutions to singular differential equations are singular and multivalued is simply an indication that we are working on the wrong space. 

The passage from flat Lie algebroid connections to Lie groupoid representations, a special case of Lie's second theorem for Lie groupoids \cite{mackenzie2000integration, moerdijk2002integrability}, gives an equivalence of categories. This can be seen as the first step in establishing a Riemann-Hilbert correspondence theorem. Indeed, in the case of the tangent Lie algebroid to a manifold $M$, we get the equivalence between smooth flat connections and representations of the fundamental groupoid $\Pi(M)$, obtained by taking the parallel transport. The classical Riemann-Hilbert correspondence then follows from the observation that $\Pi(M)$ and $\pi_{1}(M,x)$, the fundamental group based at the point $x \in M$, are Morita equivalent, and so have equivalent categories of representations. In this paper, we are interested in flat connections on a complex manifold $X$ with logarithmic singularities along a smooth hypersurface $D$. The relevant Lie algebroid in this case is the log tangent algebroid $T_{X}(- \log D)$, which integrates to the twisted fundamental groupoid $\Pi(X,D)$. Our study of logarithmic connections therefore comes down to a study of the representations of this groupoid. 

In Section \ref{Firstsection} of the paper, we study flat connections on the affine line which have a logarithmic singularity at the origin. The twisted fundamental groupoid, in this case, is simply given by an action groupoid $\mathbb{C} \ltimes \mathbb{A}$. In Theorem \ref{RH}, we construct an explicit pair of inverse functors between the category of $\mathbb{C} \ltimes \mathbb{A}$ representations and a category of generalized monodromy data $F((\bb{C}, 0), G)$, which we describe below. The objects of this category are tuples $(M, K_{1}, \nu_{0}, K_{0}, A)$, where $K_{1}$ and $K_{0}$ are right $G$-torsors, $M \in Aut_{G}(K_{1})$ is a monodromy element, $A \in \frak{aut}_{G}(K_{0})$ is a residue element, and $\nu_{0} \subset \spc{Hom}_{G}(K_{0},K_{1})$ is a torsor for a unipotent group which is determined by $A$ and which is interpreted as a space of regularized parallel transport maps. These data are furthermore required to satisfy a compatibility condition, which is described in Section \ref{Definingthecat}. We think of the objects in this category as in the following diagram. 
\begin{center}
\begin{tikzpicture}
\node (A) at (-1,0) {$K_{1}$};
\node (B) at (1,0) {$K_{0}$};

\draw[->, thick] (B) to [out = 135, in=45] node [above] {$\nu_{0}$} (A);
\path (B) edge [->, thick, out=45,in=-45,looseness=8] node[right] {$A$} (B);
\path (A) edge [->, thick, out=135,in=225,looseness=8] node[left] {$M$} (A);
\end{tikzpicture}
\end{center}
If we decategorify Theorem \ref{RH}, then we recover the description of the isomorphism classes of ordinary differential equations with Fuschian singularities established in \cite{babbitt1983formal, kleptsyn2004analytic}. Furthermore, we can substitute $F((\bb{C},0), G)$ with an equivalent category $E((\bb{C},0), G)$, where $\nu_{0}$ is replaced by a torsor for the unipotent radical of a parabolic subgroup of $Aut_{G}(K_{0})$. Looking at the isomorphism classes of objects in this category then leads to the classification result of \cite[Theorem A]{boalch2011riemann}. 

We will now briefly outline the proof of Theorem \ref{RH}, and note the differences with some of the existing approaches. A representation $\Phi$ of $\bb{C} \ltimes \bb{A}$ has a canonical linear approximation $L(\Phi)$, which corresponds to a trivial connection of the form 
\[
\nabla = d - R\frac{dz}{z},
\]
for constant $R$. Since such trivial connections are fully characterized by their residues, it is tempting to approach the classification problem by searching for a \emph{linearization} isomorphism between $\Phi$ and $L(\Phi)$. However, the phenomenon of resonance obstructs certain representations from admitting linearizations. There are several existing methods for getting around this problem, which involve abandoning linearizations in favor of less canonical normal forms. When the structure group is $GL(n, \bb{C})$, it is possible to use a meromorphic gauge transformation to trivialize the connection so that the connection $1$-form is constant and satisfies a normalization condition on its eigenvalues. This leads to a classification of logarithmic connections given in terms of the data of a vector space $V$ equipped with a filtration $F$ and a compatible monodromy matrix $M$. The filtration $F$ was first described by Levelt \cite{levelt1961hypergeometric} and it encodes the asymptotic behavior of the meromorphic gauge transformation. This approach to classification fails to generalize to the case of an arbitrary reductive structure group. As explained in \cite{babbitt1983formal, boalch2011riemann}, this is because the exponential map may not be surjective. Note also that the trivial connection arising in this approach is in general not isomorphic to the linear approximation of the original connection. 

Another approach to classification is to deal directly with the more complicated normal forms which exist for logarithmic connections. Using a holomorphic gauge transformation, it is possible to trivialize a logarithmic connection so that it is in the Levelt normal form
\[
A(z) = S + \sum_{i \geq 0} z^{i}N_{i},
\]
where $S$ is semisimple, and $N_{i}$ are nilpotent and satisfy $[S,N_{i}] = iN_{i}$. The Levelt normal form of a given connection is not unique, and the classification results of \cite{babbitt1983formal, kleptsyn2004analytic} are obtained by analyzing the holomorphic equivalence of different Levelt normal forms. The approach of \cite{boalch2011riemann} also works with the Levelt normal form, but gives a classification in terms of a monodromy element $M$ contained in a parabolic subgroup $P$, generalizing the classification using the Levelt filtration. 

In this paper, we do not abandon linearizations, but rather circumvent the problem posed by resonance by using a functorial Jordan-Chevalley decomposition for representations. Namely, in Theorem \ref{JCdecomp} we show that a given representation $\Phi$ factors into a representation with semisimple monodromy $\Phi_{s}$ and a unipotent automorphism $U$. This overcomes the difficulty posed by resonance because representations with semisimple monodromy always admit linearizations. Indeed, by using an argument based on linearizing $\mathbb{C}^{*}$ actions, we show in Proposition \ref{sslinearize} that a representation is linearizable if and only if its monodromy is conjugate to the exponential of its residue. As a result, we obtain Theorem \ref{RH}, in which a representation $\Phi$ is classified by the data of its monodromy $M$, its residue $A$, and the space $\nu_{0}$ of strict linearizations of its semisimple component $\Phi_{s}$. This provides a much cleaner approach to classification, and because it does not rely on arbitrary choices, it is completely functorial. As a corollary, we also obtain a novel proof of the Levelt normal form, and consequently of the fact that logarithmic singularities are regular. 

In Section \ref{Globalsection} of the paper, we turn to the general case of flat connections on a complex manifold $X$ with logarithmic singularities along a smooth hypersurface $D$. The main tool here is Morita equivalence, which allows us to decompose the twisted fundamental groupoid $\Pi(X,D)$ into simple building blocks. These building blocks consist of the fundamental group of the complement $X \setminus D$, and certain action groupoids defined on the fibers of the normal bundle to $D$. In this way, the problem of classifying logarithmic connections is largely reduced to the $1$-dimensional case, and hence we can readily apply the results from Section \ref{Firstsection}. 

In Theorem \ref{FinalRH}, we prove an equivalence between the category of flat connections on $X$ with logarithmic singularities along $D$, and a category $F( (X,D), G)$ of generalized monodromy data. The category $F( (X,D), G)$ depends on the choice of a base point $x_{0}$ in $X \setminus D$ and a collection of paths $p_{i}$ going from $x_{0}$ to the connected components of $D$. The details of the definition are given in Section  \ref{RHsection}. Very roughly, an object in this category consists of the data of a residue element for each component of $D$, a space of regularized parallel transport maps for each path $p_{i}$, and a representation of $\pi_{1}(X \setminus D, x_{0})$. We stress here that the equivalence of Theorem \ref{FinalRH} is given by a pair of inverse functors, although we do not write them out explicitly. 

The groupoids constructed in Section \ref{Globalsection}, and the Morita equivalences relating them, are interesting in their own right and may be interpreted in terms of Deligne's idea of paths with tangential basepoints \cite{deligne1989groupe}. Furthermore, the results on Morita equivalence may be understood both as an instance of Zung's linearization theorem \cite{zung2006proper, weinstein2002linearization, crainic2013linearization} appropriately generalized to Lie groupoids in the holomorphic category, and as a generalization of the van Kampen theorem with several basepoints \cite{brown1967groupoids, brown1984van}. 

\vspace{.05in}

\noindent \textbf{Acknowledgements.} 
I began this project at the start of my Ph.D. studies in 2015 under the supervision of M. Gualtieri, and several ideas were worked out during a visit to Paris in 2016 where I had the opportunity to talk with P. Boalch. In the end, it took the confinement imposed by the coronavirus pandemic to force me to finalize the details. Many of the ideas of this project were developed in collaboration with M. Gualtieri, in particular the work in the final section. I would like to thank M. Gualtieri, P. Boalch, and B. Pym for many useful discussions. I am supported by an NSERC postdoctoral fellowship.

\section{Lie groupoids and differential equations}
In this section, we explain how to recast the notion of flat logarithmic connections into the language of Lie groupoids. For an introduction to Lie groupoids and Lie algebroids, we recommend \cite{mackenzie2005general, crainic2011lectures, MR2012261}. 

Let $G$ be a complex Lie group, with Lie algebra $\frak{g}$. The Atiyah algebroid of a principal $G$-bundle $p : P \to M$ is the Lie algebroid $At(P) = TP/G$ of $G$-invariant vector fields on $P$. It is a Lie algebroid over $M$ and it sits in the following short exact sequence 
\begin{equation} \label{Atiyah sequence}
0 \to Ad(P) \to At(P) \to TM \to 0,
\end{equation}
where $Ad(P) = P \times_{G} \frak{g}$ is the adjoint bundle, and the projection $dp: At(P) \to TM$ is the anchor map. Trivialising the bundle $P \cong M \times G$ gives rise to a splitting of sequence \ref{Atiyah sequence}. This gives a concrete description of the Atiyah algebroid as $At(P) \cong TM \oplus \frak{g}$, with bracket
\[
[(V, a), (U, b)] = ([V, U], V(b) - U(a) + [a,b]),
\]
where $V, U$ are vector fields, and $a, b$ are sections of $\frak{g} \times M$. 

One of the Lie groupoids integrating the Atiyah algebroid is the gauge groupoid $\cal{G}(P)$. This is the groupoid over $M$ whose space of arrows is given by $(P \times P)/G$, interpreted as the space of $G$-equivariant maps between the fibers of $P$. A trivialization of $P$ induces an isomorphism $\cal{G}(P) \cong M \times G \times M$, for which the target and source maps are given, respectively, by 
\[
t(x, g, y) = x, \qquad s(x, g, y) = y,
\]
and the groupoid multiplication is given by 
\[
(x, g, y) \ast (y, h, z) = (x, gh, z). 
\]

A $G$-representation of a Lie algebroid $A \to M$ is a Lie algebroid homomorphism $\nabla : A \to At(P)$ into the Atiyah algebroid of a principal $G$-bundle $P$. These form a category, which we denote $\text{Rep}(A, G)$. For example, when the Lie algebroid is the tangent bundle $TM$, representations coincide with smooth flat connections over $M$. 

Let $X$ be a complex manifold with a smooth hypersurface $D$. The logarithmic tangent bundle $T_{X}(- \log D)$ is the Lie algebroid whose local sections are vector fields on $X$ which are tangent to $D$. In the case of the affine line $\bb{A}$ with $D$ given by the origin, the logarithmic tangent bundle $T_{\bb{A}}(-\log \  0)$ has a global non-vanishing section, whose image under the anchor map is the Euler vector field $z \partial_{z}$. 

A flat connection on $X$ with logarithmic singularities along $D$ is defined to be a representation of $T_{X}(- \log D)$. More precisely, this is a principal bundle $P$ and a homomorphism of Lie algebroids
\[
\nabla: T_{X}(- \log D) \to At(P). 
\]
This agrees with the standard definition, as given in \cite[Chapter 2]{deligne2006equations}. Logarithmic flat connections are a generalization of linear ordinary differential equations with Fuchsian singularities. Indeed, a logarithmic connection on the trivial bundle over the affine line is given by a morphism ${\nabla : T_{\bb{A}}(- \log \ 0) \to T\bb{A} \oplus \frak{g}}$, which is specified by its value on a global section as follows
\[
\nabla(z \partial_{z}) = (z \partial_{z}, -A(z)). 
\]
This corresponds to the following ordinary differential equation
\begin{equation} \label{LogODE}
z \frac{ds}{dz} = A(z)s(z),
\end{equation}
which has a Fuchsian singularity at the origin. In this equation, $s$ is a $G$-valued function, which is a solution to the differential equation and a flat section of the logarithmic connection. Changing this equation using a gauge transformation is equivalent to changing the trivialization of the principal bundle. 

A $G$-representation of a Lie groupoid $\cal{G} \rightrightarrows M$ is a homomorphism $\Phi : \cal{G} \to \cal{G}(P)$ into the gauge groupoid of a principal $G$-bundle $P$. These form a category, which we denote $\text{Rep}(\cal{G}, G)$. Since the gauge groupoid of the trivial bundle is isomorphic to $M \times G \times M$, a representation of $\cal{G}$ on the trivial bundle is determined by a homomorphism $\phi: \cal{G} \to G$. 

Let $A$ be the Lie algebroid of $\cal{G}$. By differentiating a representation $\Phi$ of $\cal{G}$, we obtain a representation of $A$
\[
d\Phi : A \to At(P).
\]
Conversely, if $\cal{G}$ has connected and simply connected source fibers, then Lie's second theorem \cite{mackenzie2000integration, moerdijk2002integrability} implies that a representation of $A$ integrates in a unique way to a representation of $\cal{G}$. It is worth noting that the analysis required for constructing this integration is limited to the classical existence and uniqueness result for first order ordinary differential equations (see \cite[Theorem 3.8]{gualtieri2018stokes}). This defines an equivalence between the categories of representations 
\[
\text{Rep}(\cal{G}, G) \cong \text{Rep}(A, G).
\]

For example, the source simply connected Lie groupoid that integrates the tangent bundle $TM$ of a manifold is the fundamental groupoid $\Pi(M)$, consisting of homotopy classes of paths in $M$. Hence, in this case, we recover the well-known fact that a flat connection on a principal bundle $P$ is equivalent to a representation of the fundamental groupoid 
\[
\Phi : \Pi(M) \to \cal{G}(P). 
\]
Indeed, the $G$-equivariant map $\Phi(\gamma) : P_{\gamma(0)} \to P_{\gamma(1)}$ associated to a path $\gamma$ in $M$ is given by the parallel transport of the connection along this path. 

The logarithmic tangent bundle $T_{X}(- \log D)$ of a pair $(X,D)$ admits an integrating Lie groupoid by results of \cite{crainic2003integrability, debord2001holonomy}. The unique integration with connected and simply connected source fibers is the \emph{twisted fundamental groupoid} and is denoted $\Pi(X,D)$. In this case, Lie's second theorem gives an equivalence between the category of flat connections on $X$ with logarithmic singularities along $D$, and the category of representations of $\Pi(X,D)$
\[
\text{Rep}(T_{X}(- \log D), G) \cong \text{Rep}(\Pi(X,D), G).
\]
The upshot of this result is that the study of logarithmic connections is reduced to the representation theory of the twisted fundamental groupoid. 

The twisted fundamental groupoid $\Pi(\bb{A}, 0)$ for the pair consisting of the affine line and the origin is given by the action groupoid $\bb{C} \ltimes \bb{A}$, associated to the multiplicative action of $\bb{C}$ on $\bb{A}$. The space of arrows is the product $\bb{C} \times \bb{A}$, the target and source maps are defined, respectively, to be 
\[
t(\lambda, z) = e^{\lambda}z, \qquad s(\lambda, z) = z,
\]
and the multiplication is given by 
\[
(\lambda_{1}, e^{\lambda_{2}} z) \ast (\lambda_{2}, z) = (\lambda_{1} + \lambda_{2}, z). 
\]

For a general pair $(X,D)$, the space of arrows of the twisted fundamental groupoid may fail to be Hausdorff. This happens when the fundamental group of a tubular neighborhood of $D$ in $X \setminus D$ does not embed into the fundamental group of $X \setminus D$. By the results of \cite{del2020hausdorff}, there is a maximal Hausdorff Lie groupoid integrating $T_{X}(-\log D)$, and any representation factors through this groupoid. This means that we could choose to study representations using only Hausdorff Lie groupoids. However, since this does not lead to any simplifications, in this paper we focus solely on representations of $\Pi(X,D)$. 

\section{Refresher on reductive groups}
In this section, we review a portion of the theory of connected complex reductive groups, mainly following the reference \cite{humphreys2012linear} (see also \cite{borel2012linear}). 

An endomorphism $M$ of a vector space $V$ is \emph{semisimple} if it can be diagonalized, \emph{nilpotent} if $M^{k} = 0$ for some positive integer $k$, and \emph{unipotent} if $M - 1$ is nilpotent. These concepts carry over to the elements of a connected complex reductive group $G$ and its Lie algebra $\frak{g}$, and can be defined via faithful representations of $G$. An element $g \in G$ admits a unique multiplicative Jordan-Chevalley (JC) decomposition $g = su$, where $s$ is semisimple, $u$ is unipotent and $su = us$. An element $x \in \frak{g}$ admits a unique additive JC decomposition $x = s + n$, where $s$ is semisimple, $n$ is nilpotent and $[s,n] = 0$. The JC decomposition is preserved by morphisms of reductive groups, the exponential map $\exp : \frak{g} \to G$ sends the additive JC decomposition to the multiplicative JC decomposition, and the exponential map defines a bijection between the set of nilpotent elements of $\frak{g}$ and the set of unipotent elements of $G$. 

There is a decomposition of semisimple elements of $\frak{g}$ into real and imaginary parts, corresponding to the decomposition of their eigenvalues in representations that are induced by representations of $G$. Combining this with the additive JC decomposition, an arbitrary element $x \in \frak{g}$ admits a unique decomposition $x = a + ib + n$, where $a$ and $b$ are real semisimple, $n$ is nilpotent, and all mutually commute.

Let $T \subset G$ be a maximal torus, with Lie algebra $\frak{t}$. Its adjoint action on $\frak{g}$ induces the root space decomposition 
\[
\frak{g} = \frak{t} \oplus \bigoplus_{\alpha \in \Phi} \frak{g}_{\alpha},
\]
where $\Phi \subset \frak{t}^{*}$ is the set of roots, and 
\[
\frak{g}_{\alpha} = \{ v \in \frak{g} \ | \ Ad(t)(v) = \alpha(t)v, \text{ for all }t \in T \}.
\]
Furthermore, $\frak{g}_{\alpha}$ is a $1$-dimensional Lie algebra which integrates to a $1$-dimensional unipotent subgroup $U_{\alpha} \subset G$. Choosing a Borel subgroup $T \subset B \subset G$ determines a decomposition of the roots $\Phi = \Phi^{+} \cup \Phi^{-}$ and fixes a base of simple roots $\Delta \subseteq \Phi^{+}$. 

A parabolic subgroup of $G$ is a subgroup that contains a Borel. The parabolics which contain $B$ are called the \emph{standard} parabolics. There are finitely many of these and they are indexed by the subsets of $\Delta$. Furthermore, every parabolic subgroup of $G$ is conjugate to a unique standard parabolic. Let $P_{I}$ denote the standard parabolic associated to the subset $I \subseteq \Delta$. It is generated by $T$, the positive root spaces $U_{\alpha}$ for $\alpha \in \Phi^{+}$, and the negative root spaces $U_{\alpha}$ for $\alpha$ in the $\bb{Z}_{\geq 0}$-span of $-I$. The unipotent radical of $P_{I}$ is the normal subgroup $U_{I}$ generated by the positive root spaces which are not in the $\bb{Z}$-span of $I$. The quotient $L_{I} = P_{I}/U_{I}$ is a reductive group whose root system $\Phi(I)$ has a base given by $I$. A Levi subgroup is a lift of $L_{I}$ to $P_{I}$, and it determines a Levi decomposition $P_{I} = L_{I} \ltimes U_{I}$. All Levi subgroups of $P_{I}$ are conjugate, and there is a canonical choice generated by $T$ and the root spaces in the $\bb{Z}$-span of $I$. 

Following \cite[Section 2]{boalch2011riemann} and \cite[page 55]{mumford1994geometric}, a real semisimple element $a \in \frak{g}$ defines a parabolic subgroup 
\[
P(a) := \{ g \in G \ | \ \lim_{z \to 0}z^{a}gz^{-a} \text{ exists in $G$ along any ray} \},
\]
where $z^{a} = \exp( \log(z) a)$. If $a \in \frak{t}$, and $(\alpha, a) \geq 0$ for all $\alpha \in \Phi^{+}$, then $P(a)$ is the standard parabolic associated to the subset $I(a) \subseteq \Delta$, defined by 
\[
I(a) = \{ \alpha \in \Delta \ | \ (\alpha, a) = 0 \}. 
\]
The unipotent radical of $P(a)$ is given by 
\[
U(a) = \{ g \in G \ | \ \lim_{z \to 0}z^{a}gz^{-a} = 1 \text{ along any ray} \},
\]
and a choice of Levi subgroup is given by $C_{G}(a)$, the centraliser of $a$ in $G$. Therefore $P(a) = C_{G}(a) \ltimes U(a)$. Note that the quotient map $\chi : P(a) \to C_{G}(a)$ can be defined by taking a limit  
\[
\chi(g) = \lim_{z \to 0} z^{a} g z^{-a}
\]
along any ray.

\section{Logarithmic connections: Local theory} \label{Firstsection}
In this section, we study the representation theory of $\Pi(\bb{A}, 0) \cong \bb{C} \ltimes \bb{A}$. By Lie's second theorem, these representations are equivalent to logarithmic flat connections on the affine line, and hence to linear ordinary differential equations of the form \ref{LogODE}. In this section, the structure group $G$ is a connected complex reductive group. 

We start by explaining how the invariants of a logarithmic connection arise out of the corresponding $\bb{C} \ltimes \bb{A}$ representation. There are two natural Lie groupoid homomorphisms 
\begin{align*}
\iota : \ &\bb{C} \to \bb{C} \ltimes \bb{A}, \qquad p : \bb{C} \ltimes \bb{A} \to \bb{C} \\
& \lambda \mapsto (\lambda, 0), \qquad  \qquad (\lambda, z) \mapsto \lambda,
\end{align*}
with the property that $p \circ \iota = id_{\bb{C}}$. These induce pullback functors between the categories of representations 
\[
\iota^{*} : \text{Rep}(\bb{C} \ltimes \bb{A}, G) \to \text{Rep}(\bb{C}, G), \qquad p^{*} : \text{Rep}(\bb{C}, G) \to \text{Rep}(\bb{C} \ltimes \bb{A}, G).
\]
Given a representation $\Phi$ of $\bb{C} \ltimes \bb{A}$ on a principal bundle $P$, the pullback $\iota^{*}(\Phi)$ has the form 
\[
\iota^{*}(\Phi) : \bb{C} \to \Aut_{G}(P_{0}), \qquad \lambda \to \exp(\lambda A_{0}),
\]
for a unique Lie algebra element $A_{0} \in \frak{aut}_{G}(P_{0})$. For a representation which corresponds to a differential equation of the form \ref{LogODE}, we have $A_{0} = A(0)$. Hence, $A_{0}$ is called the \emph{residue} of the representation, and is denoted $\Res(\Phi)$. 

The representations in the image of $p^{*}$ are said to be \emph{trivial}, because they are defined on trivial bundles $K \times \bb{A}$, for $G$-torsors $K$, and have the form 
\[
\phi(\lambda, z) = \exp(\lambda A),
\]
for $A \in \frak{aut}_{G}(K)$. These correspond to differential equations of the form \ref{LogODE} for which the connection $1$-form $A(z)$ is constant. 

The \emph{monodromy} of a representation $(P, \Phi)$ at a non-zero point $z \in \bb{A}$ is the $G$-equivariant automorphism $M(z) = \Phi(2 \pi i, z) : P_{z} \to P_{z}$. This represents the failure of a solution to the corresponding differential equation \ref{LogODE} to be single-valued. If we let $z$ vary over the affine line then we get a holomorphic automorphism $M$ of $(P, \Phi)$. At the point $z = 0$, it is related to the residue via the exponential map
\[
M(0) = \Phi(2\pi i, 0) = \exp(2 \pi i \Res(\Phi)). 
\]
If the bundle $P$ is trivial, then the monodromy maps $M(z)$ for different non-zero values of $z$ are all conjugate. The automorphism $M(0)$ may fail to be conjugate to these maps, but it always lies in the closure of their conjugacy class. 

Combining the functors $\iota^{*}$ and $p^{*}$ we get a \emph{linear approximation} functor 
\[
L := p^{*} \circ \iota^{*} : \rep{(\mathbb{C} \ltimes \mathbb{A}, G)} \to \rep{(\mathbb{C} \ltimes \mathbb{A}, G)}.
\]
This takes an arbitrary representation of $\mathbb{C} \ltimes \mathbb{A}$ and outputs the trivial representation determined by its residue. We highlight the following important definition. 
\begin{definition}
A \emph{linearization} of a representation $\Phi$ of $\mathbb{C} \ltimes \mathbb{A}$ is given by an isomorphism of representations 
\[
T : L(\Phi) \to \Phi. 
\]
A linearization $T$ is \emph{strict} if $i^{*}(T) = id_{i^*(\Phi)}$. 
\end{definition}
\begin{remark}
The notion of a strict linearization is well-defined because $i^{*}L = i^*$. Furthermore, $L^2 = L$, so if $T$ is a linearization of a representation, then $T \circ L(T^{-1})$ defines a strict linearization. Hence, linearizations exist if and only if strict ones exist. 
\end{remark}
\begin{remark}
Let $T$ be a linearization of a representation $(P, \Phi)$. For every point $z \in \bb{A}$, this defines a $G$-equivariant map $T_{z} : P_{0} \to P_{z}$. Hence, a linearization may be interpreted as a regularized parallel transport out of the singularity. 
\end{remark}

Consider a linearizable representation $(P, \Phi)$. It's linear approximation is given by ${(P_{0} \times \bb{A}, L(\Phi))}$, where $L(\Phi)(\lambda, z) = \exp( \lambda A)$, and where $A \in \frak{aut}_{G}(P_{0})$ is its residue. The space of linearizations, denoted $\nu(\Phi)$, is a right torsor for the group $\Aut(L(\Phi))$ of automorphisms of $L(\Phi)$. The space of strict linearizations, denoted $\nu_{0}(\Phi)$, is a right torsor for the subgroup $\Aut_{0}(L(\Phi))$ consisting of automorphisms of $L(\Phi)$ which are the identity at the point $0 \in \mathbb{A}$. 

The most important linearizable representations in this paper are those with semisimple monodromy, and it will be useful to have a description of the automorphism group of $L(\Phi)$ in this case. For simplicity of presentation, we choose a trivialization of $P$, and write $L(\Phi)(\lambda, z) = \exp(\lambda S) : \mathbb{C} \ltimes \mathbb{A} \to G$, where $S \in \frak{g}$ is semisimple. Let $S = a + ib$ be the decomposition into real and imaginary parts. An automorphism of $L(\Phi)$ is given by a holomorphic map $g : \bb{A} \to G$ which satisfies 
\[
g(e^{\lambda}z) L(\Phi)(\lambda, z) = L(\phi)(\lambda, z)g(z), 
\]
for all $(\lambda, z) \in \bb{C} \ltimes \bb{A}$. As a result of this equation, an automorphism is determined by its value at a single non-zero point of $\bb{A}$. Hence, we can embed the automorphism group into $G$ via the homomorphism 
\[
\Aut(L(\Phi)) \to G, \qquad g \mapsto g(1). 
\]
We can also restrict an automorphism to the point $0 \in \bb{A}$ to obtain an automorphism of the representation $i^{*}(\Phi)$. This gives rise to the following short exact sequence 
\begin{equation} \label{autexactseq}
1 \to \Aut_{0}(L(\Phi)) \to \Aut(L(\Phi)) \to \Aut(i^{*}(\Phi)) \to 1,
\end{equation}
which has a splitting given by the pullback functor $p^{*}$. 

\begin{proposition}\label{autofrep}
The automorphism groups of $L(\Phi)$ and $\iota^{*}(\Phi)$ are given by 
\[
\Aut_{0}(L(\Phi)) = U(a) \cap C_{G}(\exp(2 \pi i S)), \ \Aut(L(\Phi)) = P(a) \cap C_{G}(\exp(2 \pi i S)), \ \Aut(\iota^{*}(\Phi)) = C_{G}(S), 
\]
where $P(a)$ is the parabolic subgroup of $G$ determined by the real semisimple element $a \in \frak{g}$, $U(a)$ is it's unipotent radical, $C_{G}(S)$ is the centralizer of $S$, and $C_{G}(\exp(2 \pi i S))$ is the centralizer of $\exp(2 \pi i S)$, which is reductive but possibly disconnected. These three automorphism groups are connected. Furthermore, $\Aut(L(\Phi))$ is the parabolic subgroup of $C_{G}(\exp(2 \pi i S))$ determined by $a$, $\Aut_{0}(L(\Phi))$ is its unipotent radical, and $\Aut(\iota^{*}(\Phi))$ is embedded as a Levi subgroup via $p^{*}$.

Given the choice of a maximal torus of $G$ whose Lie algebra contains $S$, we have the identification 
\[
\Aut_{0}(L(\Phi)) = U_{\bb{N}}(S),
\]
where $U_{\bb{N}}(S)$ is the subgroup of $G$ generated by the root spaces for roots that pair with $S$ to give a positive integer. As a result, we have the identification 
\[
\Aut(L(\Phi)) = C_{G}(S) \ltimes U_{\bb{N}}(S). 
\]
\end{proposition}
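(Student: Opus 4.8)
The plan is to compute the three automorphism groups directly from the defining equation for an automorphism of $L(\Phi)$, and then extract the structural statements (connectedness, parabolic/unipotent radical/Levi) from the explicit descriptions. So first I would take $g : \bb{A} \to G$ holomorphic satisfying $g(e^{\lambda}z)\exp(\lambda S) = \exp(\lambda S) g(z)$ for all $(\lambda, z)$, and analyze what this forces. Setting $z = 0$ shows $g(0) \in C_{G}(\exp(\lambda S))$ for all $\lambda$, hence $g(0) \in C_{G}(S)$ after differentiating in $\lambda$; this handles $\Aut(\iota^{*}(\Phi)) = C_{G}(S)$, which is connected since $S$ is semisimple and $G$ is connected reductive (centralizers of tori, or more precisely of semisimple elements, in connected reductive groups are connected). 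For the full group, evaluating at a nonzero point, say $z = 1$, and using the equation to propagate, we get that $g$ is determined by $h := g(1)$ via $g(z) = z^{S} h z^{-S}$ wherever this makes sense (here $z^{S} = \exp(\log(z) S)$); the constraint that $g$ extends holomorphically across $z = 0$ is exactly the condition that $\lim_{z \to 0} z^{S} h z^{-S}$ exists. Writing $S = a + ib$ and noting that $z^{ib}$ stays bounded (it is $\exp(i \log|z| \cdot(\text{something}))$ up to the argument — more carefully, along a ray $z = re^{i\theta}$, $z^{ib}$ has bounded orbit), the existence of the limit reduces to $\lim_{z \to 0} z^{a} h z^{-a}$ existing, i.e. $h \in P(a)$. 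Additionally, applying the same equation with $\lambda = 2\pi i$ gives $g(z) M(z) = M(z) g(z)$ where $M(z) = \exp(2\pi i S)$ is the (constant, in this trivialization) monodromy, forcing $h \in C_{G}(\exp(2\pi i S))$. This gives $\Aut(L(\Phi)) \hookrightarrow P(a) \cap C_{G}(\exp(2\pi i S))$, and conversely any such $h$ produces a valid automorphism, so we get equality; the strict condition $g(0) = 1$ translates, via $g(0) = \chi(h) = \lim_{z\to 0} z^{a} h z^{-a}$, into $h \in U(a)$, yielding $\Aut_{0}(L(\Phi)) = U(a) \cap C_{G}(\exp(2\pi i S))$.

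Next I would derive the structural claims. Since $\exp(2\pi i S)$ is semisimple, $H := C_{G}(\exp(2\pi i S))$ is reductive, possibly disconnected; but $S \in \frak{h} = \mathrm{Lie}(H)$ (as $S$ commutes with $\exp(2\pi i S)$), and $a = \Re(S) \in \frak{h}$ as well. The element $a$ is real semisimple in $\frak{h}$, so it defines a parabolic $P_{H}(a) \subseteq H$ with unipotent radical $U_{H}(a)$ and Levi $C_{H}(a)$. I claim $P_{H}(a) = P(a) \cap H$ and $U_{H}(a) = U(a) \cap H$: this follows because the defining limit conditions for $P(a), U(a)$ only involve conjugation by $z^{a}$, which preserves $H$ (as $a \in \frak{h}$), so intersecting with $H$ is the same as running the construction internally to $H$. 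Hence $\Aut(L(\Phi)) = P_{H}(a)$ is a parabolic of $H$ and $\Aut_{0}(L(\Phi)) = U_{H}(a)$ its unipotent radical; in particular both are connected. The Levi is $C_{H}(a) = C_{G}(\exp(2\pi i S)) \cap C_{G}(a)$, and I would identify this with $C_{G}(S)$: since $S = a + ib$ with $a, b$ commuting real semisimple elements, $C_{G}(S) = C_{G}(a) \cap C_{G}(b)$, and one checks $C_{G}(b) \cap C_{G}(a) = C_{G}(\exp(2\pi i S)) \cap C_{G}(a)$ — the nontrivial inclusion being that an element commuting with $a$ and with $\exp(2\pi i(a+ib)) = \exp(2\pi i a)\exp(-2\pi b)$... hmm, actually since it commutes with $a$ it commutes with $\exp(2\pi i a)$ and hence with $\exp(-2\pi b \cdot i)$ wait — let me just say: commuting with $a$ and with $\exp(2\pi i S)$ forces commuting with $\exp(2\pi i S)\exp(-2\pi i a) = \exp(2\pi b)$ (using $[a,b]=0$), and since $b$ is real semisimple $\exp(2\pi b)$ has the same centralizer as $b$. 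This gives $C_{H}(a) = C_{G}(S)$, matching $\Aut(\iota^{*}(\Phi))$, and the splitting is $p^{*}$ by construction since $p^{*}$ produces constant-in-$z$ automorphisms $g(z) = h$ which satisfy $g(0) = h$.

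Finally, for the root-space description: fix a maximal torus $T$ with $S \in \frak{t}$. The condition that $h = \exp(\sum x_{\alpha})$ (in root coordinates, up to the torus part) lies in $U(a) \cap C_{G}(\exp(2\pi i S))$ amounts to: $(\alpha, a) > 0$ (so that $z^{a} U_{\alpha} z^{-a} \to 1$) and $\alpha(\exp(2\pi i S)) = 1$, i.e. $(\alpha, S) \in \ZZ$. Combined, $(\alpha, S) = (\alpha, a) + i(\alpha, b)$ being a positive integer (real part $> 0$, and integrality forces the imaginary part $(\alpha, b) = 0$, so $(\alpha, S) = (\alpha,a) \in \ZZ_{>0}$). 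That is exactly the defining condition for the root spaces generating $U_{\bb{N}}(S)$, giving $\Aut_{0}(L(\Phi)) = U_{\bb{N}}(S)$, and then $\Aut(L(\Phi)) = C_{G}(S) \ltimes U_{\bb{N}}(S)$ follows from the Levi decomposition already established. The main obstacle I anticipate is the careful justification of the boundedness/limit manipulations involving $z^{ib}$ — showing that the existence of $\lim_{z\to 0} z^{S} h z^{-S}$ along rays is equivalent to $h \in P(a) \cap C_{G}(\exp(2\pi i S))$, and in the strict case to $h \in U(a) \cap C_{G}(\exp(2\pi i S))$ — since one must control the oscillatory factor $z^{ib}$ and ensure holomorphic (not merely continuous) extension across the origin; the cleanest route is probably to diagonalize the adjoint action of $S$ and argue component-by-component on root spaces and on $\frak{t}$, reducing everything to the scalar statement about $z^{c}$ with $c \in \CC$.
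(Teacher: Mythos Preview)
Your proposal is correct and follows essentially the same approach as the paper: derive the constraints on $g(1)$ from the defining equation, identify the limit condition at $z=0$ with membership in $P(a)$, and read off the root-space description. The one place the paper is slicker is precisely your anticipated obstacle with $z^{ib}$: the paper first sets $\lambda = 2\pi i$ to get $g(1) \in C_G(\exp(2\pi i S))$, then notes that $C_G(\exp(2\pi i S)) = C_G(\exp(2\pi i a)) \cap C_G(b)$ (from the factorization $\exp(2\pi i S) = \exp(2\pi i a)\exp(-2\pi b)$, whose commuting factors have eigenvalues on the unit circle and on the positive reals respectively). Since $g(1)$ then commutes with $b$, the factor $z^{ib}$ commutes through and cancels, giving $g(z) = z^{S} g(1) z^{-S} = z^{a} g(1) z^{-a}$ exactly---no boundedness or oscillation estimate is needed. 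Beyond this, your treatment of the structural claims (parabolic of $H$, unipotent radical, Levi identification $C_H(a) = C_G(S)$, connectedness) is more detailed than the paper, which simply asserts $P(a)\cap C_G(\exp(2\pi i S)) = C_G(S)\ltimes U_{\bb{N}}(S)$ and leaves the rest to the reader.
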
 
\begin{remark}
The group $U_{\bb{N}}(S)$ was also considered in \cite[Section 8.5]{babbitt1983formal} in relation to their classification of logarithmic connections.
\end{remark}
\begin{proof}
An element $g \in \Aut(L(\Phi))$ satisfies 
\begin{equation} \label{defgauge}
g(e^{\lambda}z) = \exp(\lambda S) g(z) \exp(\lambda S)^{-1}.
\end{equation}
Setting $\lambda = 2 \pi i$ we see that $g(1) \in C_{G}(\exp(2 \pi i S)) = C_{G}(\exp(2 \pi i a)) \cap C_{G}(b)$. Setting $\lambda = \log(z)$ and $z = 1$, we get 
\[
g(z) = z^{a} g(1) z^{-a},
\]
for $z \neq 0$. Therefore $\lim_{z \to 0}z^{a} g(1) z^{-a} = g(0)$, and so $g(1) \in P(a)$. Conversely, let \[g_{1} \in P(a) \cap C_{G}(\exp(2 \pi i S)) = C_{G}(S) \ltimes U_{\bb{N}}(S).\] It has a decomposition as $g_{1} = c \prod_{j}\exp(n_{j})$, where $c \in C_{G}(S)$ and $n_{j} \in \frak{g_{\alpha_{j}}}$, for a root $\alpha_{j}$ satisfying $\alpha_{j}(S) \in \bb{N}$. We can therefore define an automorphism of $L(\Phi)$ by setting 
\[
g(z) = z^{S} g_{1} z^{-S} = c \prod_{j} \exp(z^{\alpha_{j}(S)} n_{j}).
\]
The rest of the proof is straightforward and is left to the reader.
\end{proof}


Certain representations cannot be linearized. We can see this by observing that for trivial representations, the exponential of the residue is conjugate to the monodromy. This is a property that is preserved by gauge transformations, and hence it holds for all linearizable representations. Therefore, representations that do not satisfy this property cannot be linearized. This turns out to be the only obstruction to linearizability, as the following proposition shows. 

\begin{proposition} \label{sslinearize}
A representation $\Phi$ on a principal bundle $P$ is linearizable if and only if its monodromy $\Phi(2 \pi i , 1)$ is conjugate to $\exp( 2 \pi i \Res(\Phi))$. In particular, since the conjugacy classes of semisimple elements are closed, representations with semisimple monodromy can always be linearized.  
\end{proposition}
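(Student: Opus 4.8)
The plan is to establish the two implications separately and then read off the ``in particular'' clause.

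\textit{Necessity.} Suppose $T\colon L(\Phi)\to\Phi$ is a linearization. Recall that $L(\Phi)$ is the trivial representation on $P_{0}\times\bb{A}$ given by $L(\Phi)(\lambda,z)=\exp(\lambda A_{0})$ with $A_{0}=\Res(\Phi)\in\mathfrak{aut}_{G}(P_{0})$. Evaluating the intertwining identity $T_{t(\lambda,z)}\circ L(\Phi)(\lambda,z)=\Phi(\lambda,z)\circ T_{z}$ at the loop $(\lambda,z)=(2\pi i,1)$, where $t(2\pi i,1)=1$, yields $T_{1}\circ\exp(2\pi i A_{0})=\Phi(2\pi i,1)\circ T_{1}$ as maps $P_{0}\to P_{1}$. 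Hence $\Phi(2\pi i,1)=T_{1}\exp(2\pi i A_{0})T_{1}^{-1}$ is conjugate, via the isomorphism $T_{1}$, to $\exp(2\pi i\Res(\Phi))$; in a trivialization this is conjugacy in $G$.

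\textit{Sufficiency: reduction and construction over $\bb{A}^{*}$.} Assume now $\Phi(2\pi i,1)$ is conjugate to $\exp(2\pi i\Res(\Phi))$. Since $\bb{A}=\bb{C}$ is Stein and contractible and $G$ is connected, $P$ is holomorphically trivial; fixing a trivialization, $\Phi$ is a homomorphism $\phi\colon\bb{C}\ltimes\bb{A}\to G$ with $A_{0}:=\Res(\Phi)\in\mathfrak{g}$, and the hypothesis says $\phi(2\pi i,1)=h\exp(2\pi i A_{0})h^{-1}$. Conjugating $\phi$ by the constant $h$ — a gauge transformation which affects neither linearizability nor the hypothesis — we may assume $\phi(2\pi i,1)=\exp(2\pi i A_{0})$. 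Using $\phi(\lambda,0)=\exp(\lambda A_{0})$ and the cocycle identity, define $g\colon\bb{A}^{*}\to G$ by $g(w)=\phi(\lambda,1)\exp(-\lambda A_{0})$ for any $\lambda$ with $e^{\lambda}=w$. Since $\exp(2\pi i A_{0})$ commutes with every $\exp(\lambda A_{0})$ and $\phi(2\pi i,1)=\exp(2\pi i A_{0})$, this value is independent of $\lambda$; so $g$ is well defined and holomorphic on $\bb{A}^{*}$, and a direct manipulation of the cocycle identity shows $g(e^{\lambda}z)\exp(\lambda A_{0})=\phi(\lambda,z)g(z)$ for $z\neq 0$, i.e.\ $g$ is a linearization of $\Phi$ over $\bb{A}^{*}$. (Equivalently: over the punctured line $\Phi$ and $L(\Phi)$ are local systems with the same monodromy class, so they are isomorphic there.)

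\textit{The crux: crossing the origin.} It remains to replace $g$ by a translate $g\cdot\sigma$, $\sigma\in\Aut(L(\Phi)|_{\bb{A}^{*}})\cong C_{G}(\exp(2\pi i A_{0}))$, which extends holomorphically and invertibly across $0$; the extension is then a linearization of $\Phi$ over all of $\bb{A}$. Resonance is the potential obstruction here, and the monodromy hypothesis is what neutralises it. The plan is to argue via linearization of $\bb{C}^{*}$-actions: a representation of $\bb{C}\ltimes\bb{A}$ on a trivial bundle is the same datum as a holomorphic $\bb{C}$-action on $G\times\bb{A}$ commuting with right translation and covering the multiplication action on $\bb{A}$ through $\bb{C}\to\bb{C}^{*}=\bb{C}/2\pi i\ZZ$, and the obstruction to this action descending to a $\bb{C}^{*}$-action is exactly the monodromy $M(z)=\phi(2\pi i,z)$, which equals $g(z)\exp(2\pi i A_{0})g(z)^{-1}$ for $z\neq 0$ and $\exp(2\pi i A_{0})$ at $z=0$. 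Under the hypothesis this obstruction can be trivialized on a neighbourhood of the origin, producing there a genuine holomorphic $\bb{C}^{*}$-action with $0$ a fixed point; the standard local linearization theorem for holomorphic $\bb{C}^{*}$-actions — organized, in the style of Proposition \ref{autofrep}, by the parabolic $P(a)$ of the real semisimple part $a$ of $A_{0}$, whose unipotent radical $U(a)$ contracts and whose Levi $C_{G}(a)$ is fixed under the scaling flow — then gives a linearization on a disc $\{|z|<\varepsilon\}$. Since restriction induces an isomorphism $\Aut(L(\Phi)|_{\bb{A}^{*}})\xrightarrow{\ \sim\ }\Aut(L(\Phi)|_{\{0<|z|<\varepsilon\}})$, this local linearization and $g$ differ over the overlapping annulus by a unique automorphism of $L(\Phi)$, which one extends to $\bb{A}^{*}$ and uses to correct $g$; the corrected $g$ then matches the local model near $0$ and the two glue to a global linearization.

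\textit{The ``in particular'' clause, and the main difficulty.} If $\Phi(2\pi i,1)$ is semisimple, its conjugacy class in $G$ is closed; on a trivial bundle the $M(z)$ for $z\neq 0$ are mutually conjugate and $M(0)=\exp(2\pi i\Res(\Phi))$ lies in the closure of that class, so closedness forces $\exp(2\pi i\Res(\Phi))$ to be conjugate to $\Phi(2\pi i,1)$, and the sufficiency direction applies. I expect the only genuine obstacle to be the step across the origin: the rest is formal, whereas that step concentrates the interaction of resonance, the monodromy hypothesis, and the non-properness of the isotropy group $\bb{C}$ of $\Pi(\bb{A},0)$ at the singular point, and hinges on substituting a $\bb{C}^{*}$-equivariant local linearization for the non-existent $\bb{C}$-equivariant one.
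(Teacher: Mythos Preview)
Your overall strategy matches the paper's: necessity is immediate from the intertwining identity, and sufficiency is to be obtained by producing a holomorphic $\bb{C}^*$-action near the origin and linearizing it at a fixed point. However, your crux paragraph glosses over the central construction, and taken literally it does not work. The monodromy $M(z)=\phi(2\pi i,z)$ cannot be gauged to the identity, so the $\bb{C}$-action you describe on $G\times\bb{A}$ never descends to a $\bb{C}^*$-action on that space. What the paper actually does is: (i) since by hypothesis $M$ maps all of $\bb{A}$ into the single conjugacy class $\mathcal{C}\ni e^{2\pi i A_0}$, which is an embedded submanifold of $G$, one can choose a holomorphic lift $h\colon D(0,r)\to G$ with $h(0)=1$ and $h(z)e^{2\pi i A_0}h(z)^{-1}=M(z)$; (ii) the gauged homomorphism $\psi(\lambda,z)=h(e^\lambda z)^{-1}\phi(\lambda,z)h(z)$ then has \emph{constant} monodromy $e^{2\pi i A_0}$ and, crucially, takes values in the centralizer $C_{G}(e^{2\pi i A_0})$; (iii) the twisted action $(\lambda,z)\ast(z,g)=(e^{\lambda}z,\,\psi(\lambda,z)\,g\,e^{-\lambda A_0})$ on $D(0,r)\times C_{G}(e^{2\pi i A_0})$ now \emph{does} descend to $\bb{C}^*$, with fixed point $(0,1)$. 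Bochner linearization then applies; splitting the resulting short exact sequence $0\to\mathfrak{c}\to T_{(0,1)}\to\bb{C}\to 0$ of $\bb{C}^*$-representations yields an invariant section $k$, and $hk$ is the desired local linearization. The right-twist by $e^{-\lambda A_0}$ together with the passage to the centralizer are precisely the missing ingredients in your sketch; your invocation of the parabolic $P(a)$ and its unipotent radical is not how the paper argues this step and does not by itself produce the needed $\bb{C}^*$-action.

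A minor organizational remark: your detour through a linearization on $\bb{A}^*$ followed by annulus gluing is correct but superfluous. As the paper observes, once a linearization $g$ exists on any disc about $0$, the intertwining equation $g(e^\lambda z)=\phi(\lambda,z)\,g(z)\,e^{-\lambda A_0}$ extends it to all of $\bb{A}$ automatically --- which is in effect the same formula you used to define $g$ on $\bb{A}^*$ in the first place. Your treatment of necessity and of the ``in particular'' clause is fine.
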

\begin{proof}
One direction follows from the discussion above. Hence it remains for us to show that a representation $\Phi$ can be linearized if its monodromy is conjugate to the exponential of its residue. To simplify, we trivialize the $G$-bundle, so that the representation is determined by a groupoid homomorphism $\phi : \mathbb{C} \ltimes \mathbb{A} \to G$. The residue is then a Lie algebra element $A \in \mathfrak{g}$ such that $\phi(\lambda, 0) = \exp(\lambda A)$, and a linearization is given by a holomorphic map $g : \mathbb{A} \to G$ such that 
\begin{equation} \label{goal}
g(e^{\lambda} z) e^{\lambda A} g(z)^{-1} = \phi(\lambda, z),
\end{equation}
for all $(\lambda, z)$. Note that it suffices to find $g$ in a neighbourhood of the point $0 \in \mathbb{A}$, since equation \ref{goal} will automatically extend $g$ to the entire affine line. 

Let $M(z) = \phi(2\pi i, z)$ be the monodromy, and let $\mathcal{C} \subseteq G$ be the conjugacy class of $\exp(2 \pi i A)$, which is an embedded submanifold of $G$. By assumption, $M : \mathbb{A} \to \mathcal{C}$. It is therefore possible to find a disc $D(0,r)$ of radius $r$ around $0 \in \mathbb{A}$, and a holomorphic map $h : D(0,r) \to G$ such that $h(0) = 1$ and $h(z)e^{2 \pi i A}h(z)^{-1} = \phi(2\pi i, z)$. As a result, the formula 
\[
\psi(\lambda, z) = h(e^{\lambda} z)^{-1} \phi(\lambda, z) h(z),
\]
defines a homomorphism $\psi : (\mathbb{C} \ltimes \mathbb{A})|_{D(0,r)} \to G$ such that $\psi(2 \pi i, z) = e^{2\pi i A}$ and $\psi(\lambda, 0) = e^{\lambda A}$. In fact, because of the equation $(\lambda, z)  (2 \pi i , z) = (2 \pi i, e^{\lambda}z)  (\lambda, z)$ in the groupoid, the image of $\psi$ is contained in $C_{G}(e^{2 \pi i A})$, the centralizer of $e^{2 \pi i A}$ in $G$. This allows us to define an action of $(\mathbb{C} \ltimes \mathbb{A})|_{D(0,r)}$ on $D(0,r) \times C_{G}(e^{2 \pi i A})$ via the formula 
\[
(\lambda, z) \ast (z, g) = (e^{\lambda} z, \psi(\lambda, z) g e^{-\lambda A}).
\]
But now note that $(2 \pi i, z) \ast (z,g) = (z, g)$, implying that this action descends to an action of $(\mathbb{C}^{*} \ltimes \mathbb{A})|_{D(0,r)}$. The point $(0,1) \in D(0,r) \times C_{G}(e^{2 \pi i A})$ is a fixed point of the action. We can hence use a Bochner linearization to find an equivariant identification between a neighbourhood of this fixed point and a neighbourhood of the origin in $T_{(0,1)}(D(0,r) \times C_{G}(e^{2 \pi i A}))$. This identification can furthermore be chosen to be compatible with the projection to $D(0,r)$. 

The $\mathbb{C}^{*}$-action on $T_{(0,1)}(D(0,r) \times C_{G}(e^{2 \pi i A}))$ is linear, and the Lie algebra of $C_{G}(e^{2 \pi i A})$, which we denote $\mathfrak{c}$, is an invariant subspace. Therefore, we have a short exact sequence of representations
\[
0 \to \mathfrak{c} \to T_{(0,1)}(D(0,r) \times C_{G}(e^{2 \pi i A})) \to \mathbb{C} \to 0,
\]
where $\mathbb{C} = T_{0}D(0,r)$ carries the standard weight $1$ action. Since $\mathbb{C}^{*}$-representations are completely reducible, we can split this sequence to produce a $\mathbb{C}^{*}$-invariant subspace. Using the equivariant identification, and shrinking $r$ as necessary, we therefore obtain a holomorphic map $k : D(0, r) \to C_{G}(e^{2 \pi i A})$ which satisfies 
\[
(\lambda, z) \ast (z, k(z)) = (e^{\lambda}z, k(e^{\lambda}z)),
\]
for $(\lambda, z) \in (\mathbb{C} \ltimes \mathbb{A})|_{D(0,r)}$. The holomorphic map $hk : D(0,r) \to G$ provides the desired linearization.

\end{proof}

\subsection{Jordan-Chevalley decomposition} \label{SecJCdec}
In this section, we explain that representations of $\bb{C} \ltimes \bb{A}$ admit a Jordan-Chevalley decomposition. In a sense, this is an integrated version of the result of \cite{levelt1975jordan} (generalized to arbitrary $G$ in \cite{babbitt1983formal}). We start by showing that the ordinary multiplicative JC decomposition can be applied to the monodromy of a representation. 

\begin{lemma}
Let $(P, \Phi)$ be a representation of $\bb{C} \ltimes \bb{A}$, with monodromy $M$, and let $M_{s}(z)$ and $M_{u}(z)$ denote the semisimple and unipotent components of $M(z)$, respectively. Then $M_{s}$ and $M_{u}$ define holomorphic automorphisms of $(P, \Phi)$. 
\end{lemma}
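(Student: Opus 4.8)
The plan is to exploit the fact that, for a representation of $\bb{C} \ltimes \bb{A}$, the monodromy at different non-zero points is \emph{conjugate}, which forces the spectrum of $M$ to be constant; this is precisely what makes the pointwise Jordan--Chevalley decomposition vary holomorphically. First I would recall that $M$ is an automorphism of $(P,\Phi)$ (the relation $(\lambda,z)(2\pi i,z) = (2\pi i,e^{\lambda}z)(\lambda,z)$ gives $\Phi(\lambda,z)\circ M(z) = M(e^{\lambda}z)\circ\Phi(\lambda,z)$), so $M(e^{\lambda}z) = \Phi(\lambda,z)M(z)\Phi(\lambda,z)^{-1}$. Since the base is $\bb{A}$, I would trivialize $P$ (it is holomorphically trivial over $\bb{A}$), so that $\Phi$ is a homomorphism $\phi:\bb{C}\ltimes\bb{A}\to G$ and $M(z) = \phi(2\pi i,z)\in G$, with $M(e^\lambda z)$ literally conjugate to $M(z)$ in $G$. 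Fixing a faithful representation $\rho: G \hookrightarrow GL(V)$, this implies that the characteristic polynomial of $\rho(M(z))$ is independent of $z$ on $\bb{A}\minus\{0\}$ (as $\lambda$ ranges over $\bb{C}$, $e^{\lambda}z$ ranges over all of $\bb{C}^{*}$), hence on all of $\bb{A}$ by continuity. Write it as $\prod_{i=1}^{k}(t-\lambda_{i})^{m_{i}}$ with the $\lambda_{i}$ distinct and nonzero.

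Next I would establish holomorphicity of the decomposition in $GL(V)$ by a resolvent/contour-integral argument. For each $i$ choose a small circle $\gamma_{i}$ around $\lambda_{i}$ of radius less than $\tfrac12\min_{j\neq i}|\lambda_{i}-\lambda_{j}|$; because the spectrum of $\rho(M(z))$ equals $\{\lambda_{1},\dots,\lambda_{k}\}$ for \emph{every} $z$, the operator $tI-\rho(M(z))$ is invertible for all $t\in\gamma_{i}$ and all $z$, so
\[
\Pi_{i}(z) \;=\; \frac{1}{2\pi i}\oint_{\gamma_{i}}\bigl(tI-\rho(M(z))\bigr)^{-1}\,dt
\]
is holomorphic in $z$ and equals the spectral projection onto the generalized $\lambda_{i}$-eigenspace. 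Then $M_{s}^{\rho}(z) := \sum_{i}\lambda_{i}\Pi_{i}(z)$ is holomorphic, invertible, semisimple, commutes with $\rho(M(z))$, and $M_{u}^{\rho}(z) := M_{s}^{\rho}(z)^{-1}\rho(M(z))$ is holomorphic and unipotent; this is the multiplicative Jordan--Chevalley decomposition of $\rho(M(z))$ in $GL(V)$.

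Finally I would descend from $GL(V)$ to $G$ and verify the intertwining property. By the standard theory recalled in the refresher section, the Jordan--Chevalley decomposition of $\rho(M(z))\in\rho(G)$ has its components in $\rho(G)$ and agrees with the group-theoretic one in $G$; since $\rho$ is a closed embedding, the resulting maps $z\mapsto M_{s}(z)$ and $z\mapsto M_{u}(z)$ into $G$ are holomorphic, commute, multiply to $M$, and are respectively semisimple and unipotent at each point. To see they are automorphisms of $\Phi$, apply the conjugation $M(e^{\lambda}z)=\phi(\lambda,z)M(z)\phi(\lambda,z)^{-1}$ together with the naturality of the Jordan--Chevalley decomposition under the algebraic-group automorphism $\mathrm{Int}(\phi(\lambda,z))$ of $G$: this yields $M_{s}(e^{\lambda}z)=\phi(\lambda,z)M_{s}(z)\phi(\lambda,z)^{-1}$ and likewise for $M_{u}$, which is exactly the statement that $M_{s}$ and $M_{u}$ intertwine $\Phi$ with itself. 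As the pointwise decomposition is canonical it does not depend on the chosen trivialization, so $M_{s}$ and $M_{u}$ are well-defined holomorphic automorphisms of $(P,\Phi)$.

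The main obstacle is the holomorphicity of $M_{s}$ and $M_{u}$: without extra input the Jordan--Chevalley parts of a holomorphic family need not be holomorphic (eigenvalue collisions destroy it). The key is the first step — that the spectrum of the monodromy is locally, hence globally, constant — which is a genuine feature of representations of $\bb{C}\ltimes\bb{A}$ and is what allows the spectral projections $\Pi_{i}(z)$ to be defined by contour integrals over \emph{fixed} loops, making them manifestly holomorphic. Once this is in place the remaining steps are routine.
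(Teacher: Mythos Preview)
Your proof is correct and follows essentially the same strategy as the paper: reduce to $GL(N,\bb{C})$ via a trivialization and faithful representation, use the conjugacy relation $M(e^{\lambda}z)=\phi(\lambda,z)M(z)\phi(\lambda,z)^{-1}$ to force the spectrum of $M$ to be constant, and then exploit this to make the semisimple part manifestly holomorphic. The only difference is in the mechanism for that last step: the paper observes that with constant conjugacy class there is a single polynomial $p$ (without constant term) with $M_{s}(z)=p(M(z))$ for $z\neq 0$, and then invokes closedness of semisimple conjugacy classes to extend across $z=0$; you instead use the Riesz spectral projections $\Pi_{i}(z)=\tfrac{1}{2\pi i}\oint_{\gamma_{i}}(tI-\rho(M(z)))^{-1}\,dt$ over fixed contours, which handles $z=0$ in one stroke since the characteristic polynomial is continuous and hence constant on all of $\bb{A}$. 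Both devices are classical and essentially interchangeable here; your version is arguably a touch cleaner at the origin, while the paper's polynomial formula makes the algebraic nature of $M_{s}$ (and the descent to $G$) slightly more transparent.
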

\begin{proof}
We start by trivializing the principal bundle $P$ and choosing a faithful representation of $G$, so that we are working with a representation of the form $\phi : \bb{C} \ltimes \bb{A} \to GL(N, \bb{C})$. This is valid since the JC decomposition is preserved by morphisms of reductive groups. 

The monodromy is an automorphism of $\phi$, and so satisfies 
\begin{equation} \label{monodromyeqproof}
M(e^{\lambda}z) = \phi(\lambda, z) M(z) \phi(\lambda, z)^{-1},
\end{equation}
for all $(\lambda, z) \in \mathbb{C} \ltimes \bb{A}$. Since conjugation preserves the $JC$ decomposition, Equation \ref{monodromyeqproof} implies that $M_{s}(z)$ and $M_{u}(z)$ vary holomorphically over $\bb{A} \setminus \{ 0\}$, and that their conjugacy classes remain constant over this locus. Hence, there exists a polynomial $p$, without constant term, such that $M_{s}(z) = p(M(z))$, for $z \neq 0$ \cite[Section 15]{humphreys2012linear}. In fact, this equality remains valid at $z = 0$ because semisimple elements have closed conjugacy classes. Furthermore, $p(M(z))$ is clearly holomorphic over $\bb{A}$, and so $M_{s}$ is likewise holomorphic. Then, $M_{u}(z) = M_{s}(z)^{-1}M(z)$ is also holomorphic at $0$. Finally, Equation \ref{monodromyeqproof} and the uniqueness of the JC decomposition imply that $M_{s}$ and $M_{u}$ define automorphisms of $\phi$.  
\end{proof}

We now recall the notion of groupoid $1$-cocycles, which we will use to deform representations. A $1$-cocycle for $\bb{C} \ltimes \bb{A}$, valued in a representation $(P, \Phi)$, is defined to be a holomorphic section $\sigma$ of the bundle of groups $t^{*}\Aut_{G}(P)$ over $\bb{C} \ltimes \bb{A}$, which satisfies the following cocycle condition 
\[
\sigma(\mu, e^{\lambda}z) \Phi(\mu, e^{\lambda}z) \sigma(\lambda, z) = \sigma(\mu + \lambda, z) \Phi(\mu, e^{\lambda}z),
\]
for all $(\mu, \lambda, z) \in \bb{C} \times \bb{C} \times \bb{A}$. Given a representation $(P, \Phi)$, and a $1$-cocycle $\sigma$, then $(P, \sigma \circ \Phi)$ defines a new representation. 

\begin{proposition} \label{UntwistingcocycleProp}
Let $\Phi$ be a representation of $\bb{C} \ltimes \bb{A}$ on a principal bundle $P$. Then the following formula defines a holomorphic groupoid $1$-cocycle
\[
\sigma_{\Phi}(\lambda, z) = \exp(\frac{-\lambda}{2 \pi i } \log(M_{u}(e^{\lambda}z))). 
\]
Furthermore, $\sigma_{\Phi}$ consists of unipotent automorphisms of $P$.
\end{proposition}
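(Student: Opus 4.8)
The plan is to identify $\sigma_{\Phi}$ with the exponential of a holomorphic nilpotent section and then reduce the cocycle identity to the automorphism property of $M_{u}$ recorded in the previous lemma. By that lemma $M_{u}$ is a holomorphic unipotent automorphism of $(P, \Phi)$, so $N := \log M_{u}$ is a well-defined holomorphic section over $\bb{A}$ of the bundle of Lie algebras $z \mapsto \frak{aut}_{G}(P_{z})$ taking values in nilpotent elements; here one uses that $\exp$ restricts to a biholomorphism from nilpotents to unipotents and that this, like the $JC$ decomposition itself, is preserved by morphisms of reductive groups, so after trivializing $P$ and choosing a faithful representation (as in the proof of the lemma) it is the matrix logarithm, which is polynomial on unipotents. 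With this notation $\sigma_{\Phi}(\lambda, z) = \exp\!\big(\tfrac{-\lambda}{2\pi i}\, N(e^{\lambda}z)\big)$. Holomorphicity is then immediate, being a composite of the target map $(\lambda, z) \mapsto e^{\lambda}z$, the section $N$, multiplication by the holomorphic function $\tfrac{-\lambda}{2\pi i}$, and $\exp$; moreover $N(e^{\lambda}z) \in \frak{aut}_{G}(P_{e^{\lambda}z})$, so its exponential lies in $\Aut_{G}(P_{e^{\lambda}z}) = (t^{*}\Aut_{G}(P))_{(\lambda, z)}$, which is the correct fibre. The last assertion of the proposition is also immediate: $\tfrac{-\lambda}{2\pi i}\,N(e^{\lambda}z)$ is a scalar multiple of a nilpotent element, hence nilpotent, so its exponential is unipotent.

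For the cocycle condition, the key input is that $M_{u}$ is an automorphism: evaluated on the arrow $(\lambda, z)$, with $t(\lambda, z) = e^{\lambda}z$ and $s(\lambda, z) = z$, this says $M_{u}(e^{\lambda}z) = \Phi(\lambda, z)\, M_{u}(z)\, \Phi(\lambda, z)^{-1}$, and since conjugation commutes with the logarithm we obtain
\[
N(e^{\lambda}z) = \Phi(\lambda, z)\, N(z)\, \Phi(\lambda, z)^{-1}.
\]
Writing $w = e^{\lambda}z$, so $e^{\mu}w = e^{\mu + \lambda}z$, the left-hand side of the cocycle identity becomes
\[
\sigma_{\Phi}(\mu, w)\, \Phi(\mu, w)\, \sigma_{\Phi}(\lambda, z) = \exp\!\Big(\tfrac{-\mu}{2\pi i}\, N(e^{\mu+\lambda}z)\Big)\, \Phi(\mu, w)\, \exp\!\Big(\tfrac{-\lambda}{2\pi i}\, N(w)\Big).
\]
Applying $\Phi(\mu, w)\,\exp(X)\,\Phi(\mu, w)^{-1} = \exp\!\big(\Phi(\mu, w)\, X\, \Phi(\mu, w)^{-1}\big)$ with $X = \tfrac{-\lambda}{2\pi i}\,N(w)$, and then the displayed identity with $(\lambda, z)$ replaced by $(\mu, w)$, the middle exponential is conjugated into $\exp\!\big(\tfrac{-\lambda}{2\pi i}\, N(e^{\mu+\lambda}z)\big)$, and the expression equals
\[
\exp\!\Big(\tfrac{-\mu}{2\pi i}\, N(e^{\mu+\lambda}z)\Big)\, \exp\!\Big(\tfrac{-\lambda}{2\pi i}\, N(e^{\mu+\lambda}z)\Big)\, \Phi(\mu, w).
\]
The two exponentials are of commuting (indeed proportional) nilpotent elements, so they combine to $\exp\!\big(\tfrac{-(\mu+\lambda)}{2\pi i}\, N(e^{\mu+\lambda}z)\big) = \sigma_{\Phi}(\mu + \lambda, z)$, and we arrive at $\sigma_{\Phi}(\mu + \lambda, z)\, \Phi(\mu, e^{\lambda}z)$, which is the right-hand side.

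The step I expect to be the only real point of friction is the bookkeeping of fibres: one must check that in each manipulation the maps being composed or conjugated land in the appropriate fibre of $\Aut_{G}(P)$ over $\bb{A}$, so that the identities $\exp(gXg^{-1}) = g\exp(X)g^{-1}$ and $\log(gug^{-1}) = g\log(u)g^{-1}$ are legitimately being applied; after trivializing $P$ and fixing a faithful representation this is transparent and the whole argument is a few lines of matrix algebra. No analysis beyond the holomorphicity already established in the lemma is required.
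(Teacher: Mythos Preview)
Your proof is correct and follows essentially the same route as the paper's: both reduce holomorphicity to the fact that the logarithm is polynomial on unipotent matrices (after trivializing $P$ and passing to a faithful representation), and both derive the cocycle identity by conjugating $\log M_{u}$ via the automorphism equation $M_{u}(e^{\lambda}z) = \Phi(\lambda,z)\,M_{u}(z)\,\Phi(\lambda,z)^{-1}$ and then combining the two commuting exponentials. Your introduction of the notation $N = \log M_{u}$ and your explicit remark on the unipotency of $\sigma_{\Phi}$ make the write-up slightly cleaner, but the argument is the same.
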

\begin{proof}
To see that $\sigma_{\Phi}$ is holomorphic, we first choose a trivialization of $P$ and a faithful representation of $G$, so that $M_{u}$ is represented by a holomorphic family of unipotent matrices. The holomorphicity of $\sigma_{\Phi}$ then follows from the fact that the matrix logarithm is well-defined for unipotent matrices and is given by a polynomial.

To check that $\sigma_{\Phi}$ satisfies the cocycle condition, we start with the fact that $M_{u}$ defines an automorphism of $\Phi$, which is encoded by the equation
\[
M_{u}(e^{\lambda}z) = \Phi(\lambda, z) M_{u}(z) \Phi(\lambda, z)^{-1}.
\]
Then using the fact that both the exponential and the logarithm respect this equation, we get 
\[
\Phi(\mu, e^{\lambda}z) \sigma_{\Phi}(\lambda, z) \Phi(\mu, e^{\lambda}z)^{-1} =  \exp(\frac{-\lambda}{2 \pi i } \log M_{u}(e^{\mu + \lambda}z)),
\]
so that 
\begin{align*}
    \sigma_{\Phi}(\mu, e^{\lambda}z) \Phi(\mu, e^{\lambda}z) \sigma_{\Phi}(\lambda, z)  \Phi(\mu, e^{\lambda}z)^{-1} &= \exp(\frac{-\mu}{2 \pi i } \log M_{u}(e^{\mu + \lambda}z))\exp(\frac{-\lambda}{2 \pi i } \log M_{u}(e^{\mu + \lambda}z)) \\
    &= \sigma_{\Phi}(\mu + \lambda, z).
\end{align*}
\end{proof}

Proposition \ref{UntwistingcocycleProp} leads to a Jordan-Chevalley decomposition for representations of $\bb{C} \ltimes \bb{A}$. Indeed, given a representation $\Phi$, the resulting representation $\Phi_{s} := \sigma_{\Phi} \circ \Phi$ has semisimple monodromy given by $M_{s}$, and the unipotent map $M_{u}$ is an automorphism of $\Phi_{s}$. This decomposition extends both the multiplicative Jordan-Chevalley decomposition of the monodromy, as well as the additive Jordan-Chevalley decomposition of the residue. We can make this JC decomposition functorial by introducing a category $\mathcal{JC}$ whose objects are tuples $(P, \Psi, U)$, where $\Psi$ is a representation of $\bb{C} \ltimes \bb{A}$ with semisimple monodromy on a principal bundle $P$, and $U$ is a unipotent automorphism of $(P, \Psi)$. 

\begin{theorem}\label{JCdecomp}
There is an isomorphism of categories $D : \rep{(\mathbb{C} \ltimes \mathbb{A}, G)} \to \mathcal{JC}$ defined on objects by sending a representation $(P, \Phi)$ to the tuple $(P, \sigma_{\Phi} \circ \Phi, M_{u})$, and defined on morphisms to be the identity. The inverse functor $D^{-1}$ sends a tuple $(P, \Psi, U)$ to the representation $(P, \tau_{U} \circ \Psi)$, where $\tau_{U}$ is the $1$-cocycle defined by the equation
\[
\tau_{U}(\lambda, z) = \exp(\frac{\lambda}{2 \pi i} \log( U(e^{\lambda} z))). 
\]
\end{theorem}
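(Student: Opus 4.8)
The plan is to establish four things: (i) $D$ sends each representation to an object of $\mathcal{JC}$; (ii) $D^{-1}$ sends each object of $\mathcal{JC}$ to a representation of $\mathbb{C} \ltimes \mathbb{A}$; (iii) both assignments are functorial; and (iv) they are mutually inverse, so that $D$ is an isomorphism of categories. Because each functor is declared to act as the identity on morphisms, step (iii) reduces to checking that a morphism of representations is simultaneously a morphism in $\mathcal{JC}$, and conversely, after which compatibility with composition is automatic; and step (iv), granting (iii), comes down to a pair of short computations on objects.

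For (i), I would evaluate the cocycle of Proposition \ref{UntwistingcocycleProp} at $\lambda = 2\pi i$, obtaining $\sigma_{\Phi}(2\pi i, z) = \exp(-\log M_{u}(z)) = M_{u}(z)^{-1}$, so the monodromy of $\sigma_{\Phi} \circ \Phi$ at $z$ is $M_{u}(z)^{-1}M(z) = M_{s}(z)$, which is semisimple. Since $\sigma_{\Phi}(\gamma)$ is an exponential of a scalar multiple of $\log M_{u}(t(\gamma))$ it commutes with $M_{u}(t(\gamma))$, and combining this with the fact (from the lemma preceding Proposition \ref{UntwistingcocycleProp}) that $M_{u}$ is an automorphism of $\Phi$, one checks that $M_{u}$ remains an automorphism of $\sigma_{\Phi} \circ \Phi$; it is unipotent by construction. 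Thus $(P, \sigma_{\Phi} \circ \Phi, M_{u}) \in \mathcal{JC}$. For (ii), I would first verify that $\tau_{U}$ is a holomorphic $1$-cocycle valued in $(P, \Psi)$ consisting of unipotent automorphisms --- this is the verbatim analogue of Proposition \ref{UntwistingcocycleProp} with the sign reversed, using that $U$ is a unipotent automorphism of $\Psi$, so that $\log U$ is polynomial and $U(e^{\lambda}z) = \Psi(\lambda,z)U(z)\Psi(\lambda,z)^{-1}$. Then $\tau_{U} \circ \Psi$ is a representation.

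For (iii), note that a $G$-equivariant bundle map over the identity is an isomorphism, so a morphism $f$ of representations intertwines the monodromies, and hence, by uniqueness of the Jordan--Chevalley decomposition, also their semisimple and unipotent parts; since $\sigma_{\Phi}$ and $\tau_{U}$ are fixed expressions in $M_{u}$ and $U$ respectively, $f$ then intertwines these cocycles and therefore the twisted representations --- this handles both $D$ and $D^{-1}$ on morphisms. For (iv): $D^{-1} \circ D = \mathrm{id}$ follows from $\tau_{M_{u}}(\lambda,z)\,\sigma_{\Phi}(\lambda,z) = \mathrm{id}$, since these two factors are the exponentials of $\pm\tfrac{\lambda}{2\pi i}\log M_{u}(e^{\lambda}z)$. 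For $D \circ D^{-1} = \mathrm{id}$, put $\Phi := \tau_{U} \circ \Psi$; evaluating at $\lambda = 2\pi i$ gives $\tau_{U}(2\pi i,z) = U(z)$, so the monodromy of $\Phi$ is $U(z)M^{\Psi}(z)$, and since $U$ is an automorphism of $\Psi$ one gets $M^{\Psi}(z)U(z)M^{\Psi}(z)^{-1} = U(z)$, i.e. $U(z)$ commutes with the semisimple element $M^{\Psi}(z)$; uniqueness of Jordan--Chevalley then identifies the unipotent part of the monodromy of $\Phi$ with $U$ (recovering the third component), and $\sigma_{\Phi} = \tau_{U}^{-1}$ gives $\sigma_{\Phi} \circ \Phi = \Psi$.

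All of these computations are short; the one point that needs care is bookkeeping --- consistently distinguishing the unipotent monodromy part of the original $\Phi$ (carried along as the third slot of the tuple) from the unipotent monodromy part of whichever representation is currently in hand (which is trivial for $\sigma_{\Phi}\circ\Phi$, as its monodromy is semisimple), and keeping track of the commutation relations these entail. With Proposition \ref{UntwistingcocycleProp} and the holomorphicity lemma preceding it in hand, nothing deeper is needed.
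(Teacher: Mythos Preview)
Your proposal is correct and follows exactly the approach the paper has in mind; in fact the paper gives no formal proof of this theorem, treating it as an immediate consequence of Proposition \ref{UntwistingcocycleProp} and the preceding lemma on holomorphicity of the Jordan--Chevalley components of the monodromy. Your four-step outline fills in precisely the computations the paper suppresses, and each verification you sketch (the evaluation at $\lambda=2\pi i$, the commutation of $\sigma_\Phi$ with $M_u$ at the target, the uniqueness of the JC decomposition forcing $M_u^\Phi = U$ in the check of $D\circ D^{-1}$) is correct.
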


Given a representation $(P, \Phi)$, we call $\Aut_{0}(L(\Phi_{s}))$ the \emph{resonance} group of $\Phi$. A representation $\Phi$ is said to be \emph{resonant} when its resonance group is non-trivial. By Proposition \ref{autofrep}, this occurs when the semisimple part of the residue of $\Phi$ pairs with a root to give a positive integer. When the structure group is $GL(n, \bb{C})$, this is equivalent to the condition that two eigenvalues of the residue differ by an integer. 

The phenomenon of resonance is the main source of subtlety in the classification of logarithmic connections. Indeed, the failure of linearizability for some representations is often attributed to resonance. However, we have seen in Proposition \ref{sslinearize} that a representation fails to be linearizable precisely when its monodromy is not conjugate to the exponential of its residue. In fact, the failure of linearizability is due to the interaction between the resonance group and the unipotent part of the monodromy. 

To see this, consider an object $(P, \Phi_{s}, U)$ of $\cal{JC}$ corresponding to a representation $\Phi$, and consider the following short exact sequence of groups 
\[
1 \to \Aut_{0}(L(\Phi_{s})) \to \Aut(L(\Phi_{s})) \to \Aut(\iota^{*}(\Phi_{s})) \to 1.
\]
By Proposition \ref{autofrep}, this sequence is isomorphic to the short exact sequence associated to a parabolic subgroup. This sequence has a canonical splitting given by the functor $p^{*}$, and the image of this splitting is a Levi subgroup. The linear approximation of $\Phi$ can be expressed in the category $\cal{JC}$ as $(P_{0} \times \bb{A}, L(\Phi_{s}), \iota^{*}(U))$, where $\iota^{*}(U)$ is viewed as an element of the Levi subgroup. A linearization of $\Phi$ is therefore equivalent to a linearization $T$ of $\Phi_{s}$, such that $T^{-1}UT = \iota^{*}(U)$. By first choosing an arbitrary linearization of $\Phi_{s}$, we can view $U$ as an element of $\Aut(L(\Phi_{s}))$ which projects to $i^{*}(U)$. Then, a linearization is equivalent to an element of $\Aut(L(\Phi_{s}))$ which conjugates $U$ to the element $\iota^{*}(U)$ in the Levi subgroup. It is clear that a linearization exists if either $U = 1$ or the resonance group is trivial. In general, if the resonance group is non-trivial, then $\iota^{*}(U)$ and $U$ may fail to be conjugate, in which case a linearization does not exist. 

%

\subsection{Classification} \label{classification}
In this section, we prove a functorial classification result for $\bb{C} \ltimes \bb{A}$ representations. Our approach to classification makes use of the JC decomposition of Theorem \ref{JCdecomp}. Namely, we first apply the functor $D$ from Theorem \ref{JCdecomp} to extract a representation $\Phi_{s} = \sigma_{\Phi} \circ \Phi$ with semisimple monodromy. Then, we take the space of strict linearizations 
\[
\nu_{0}(\Phi_{s}) \subset \Hom{L(\Phi_{s}), \Phi_{s}}  \subseteq \spc{Hom}_{G}(P_{0},P_{1}),
\]
which is non-empty by Proposition \ref{sslinearize}, and which is a torsor for the resonance group $\Aut_{0}(L(\Phi_{s}))$. The data that we use to classify the representation consists of $(M(\Phi), \nu_{0}(\Phi_{s}), \Res(\Phi))$, where $M(\Phi) = \Phi(2 \pi i, 1)$ is the monodromy at the point $1$. 
\subsubsection{The category of generalized monodromy data} \label{Definingthecat}
We define a category $F((\bb{C},0), G)$ of generalized monodromy data, which we will use to classify representations of $\bb{C} \ltimes \bb{A}$. To make sense of the definition, we start with a few simple observations. Let $K_{1}$ and $K_{0}$ be right $G$-torsors. The space of $G$-equivariant maps $\spc{Hom}_{G}(K_{0},K_{1})$ is then a bi-torsor for $\Aut_{G}(K_{1})$ acting on the left and $\Aut_{G}(K_{0})$ acting on the right. Let $A \in \frak{aut}_{G}(K_{0})$, which admits a JC decomposition $A = S + N$, for $S$ semisimple and $N$ nilpotent. The element $S$ defines two subgroups 
\[
U_{\bb{N}}(S) \subset C_{\Aut_{G}(K_{0})}(S) \ltimes U_{\bb{N}}(S) \subseteq \Aut_{G}(K_{0}).
\]
These groups were identified in Proposition \ref{autofrep} with the automorphism groups of the trivial representation $\exp(\lambda S)$ on $K_{0} \times \bb{A}$. In particular, $U_{\bb{N}}(S)$ is the resonance group of $\exp(\lambda S)$. Next, let  \[ \nu_{0} \subset \spc{Hom}_{G}(K_{0},K_{1}) \] be a reduction of structure to $U_{\bb{N}}(S)$, which is acting on the right. This extends to a right $C_{\Aut_{G}(K_{0})}(S) \ltimes U_{\bb{N}}(S)$-torsor denoted $\nu \subseteq \spc{Hom}_{G}(K_{0},K_{1})$. By taking the stabilizers of $\nu_{0}$ and $\nu$ for the left action, we obtain subgroups $\text{St}(\nu_{0}) \subset \text{St}(\nu)$ of $\Aut_{G}(K_{1})$. The groups $\text{St}(\nu_{0})$ and $\text{St}(\nu)$ are non-canonically isomorphic to $U_{\bb{N}}(S)$ and $C_{\Aut_{G}(K_{0})}(S) \ltimes U_{\bb{N}}(S)$, respectively. Therefore, $\text{St}(\nu_{0})$ is normal in $\text{St}(\nu)$, and there is a canonical short exact sequence 
\begin{equation} \label{canonicalprojection}
1 \to \text{St}(\nu_{0}) \to \text{St}(\nu) \xrightarrow{\chi} C_{\Aut_{G}(K_{0})}(S) \to 1.
\end{equation}
The objects of $F((\bb{C},0), G)$ are defined to be tuples $(M, K_{1}, \nu_{0}, K_{0}, A)$, where $K_{1}, K_{0}, \nu_{0}$ and $A$ are as above, and $M \in St(\nu)$ satisfies 
\[
\chi(M) = \exp(2 \pi i A). 
\]
The morphisms in $F((\bb{C},0),G)$ are pairs of maps between the underlying $G$-torsors that preserve all the structure.

\subsubsection{Equivalence of categories}
The preceding discussion, combined with Proposition \ref{sslinearize} and Proposition \ref{autofrep}, implies that we have a well-defined functor 
\begin{align} \label{Leveltfunc}
\mathcal{L} : \Rep(\mathbb{C} \ltimes \mathbb{A}, G) \to F((\mathbb{C},0), G), \qquad (P, \Phi) \mapsto (M(\Phi),P_{1}, \nu_{0}(\Phi_{s}),P_{0}, \Res\Phi).
\end{align}
In the opposite direction, we define a functor 
\begin{equation} \label{Reesfunc}
\mathcal{R} : F((\mathbb{C},0), G) \to \Rep(\mathbb{C} \ltimes \mathbb{A}, G),
\end{equation}
which will provide an equivalence of categories. Again, we use the JC decomposition of Theorem \ref{JCdecomp}. Given an object $(M, K_{1}, \nu_{0}, K_{0}, A)$ of $F((\mathbb{C},0), G)$, we first construct a representation $\Phi_{s}$ with semisimple monodromy. The group $U_{\bb{N}}(S)$ is identified with the resonance group of the trivial representation $\exp(\lambda S)$, and therefore it acts on the trivial bundle $K_{0} \times \bb{A}$. We use this action to construct a principal bundle 
\[
\nu_{0} \otimes K_{0} = (\nu_{0} \times K_{0} \times \bb{A})/U_{\bb{N}}(S).
\]
The representation $\Phi_{s}$ is defined on this bundle via the formula
\[
\Phi_{s}(\lambda, z)(h, k, z) = (h, \exp(\lambda S)(k), e^{\lambda}z),
\]
for $(h,k,z) \in \nu_{0} \times K_{0} \times \bb{A}$. It is clear that $\Phi_{s}$ has semisimple monodromy. Next, using the unipotent component $M_{u}$ of $M$ and the nilpotent component $N$ of $A$, we define an automorphism of $\Phi_{s}$ 
\[
U : (h, k, z) \mapsto (M_{u} h \exp(-2\pi i N), \exp(2\pi i N)(k), z). 
\]
We therefore have an object of $\cal{JC}$, and so we define $\cal{R}(M, K_{1}, \nu_{0}, K_{0}, A) = D^{-1}(\nu_{0} \otimes K_{0}, \Phi_{s}, U)$. Given a morphism in $F((\bb{C},0), G)$
\[
(T_{1}, T_{0}) : (M, K_{1}, \nu_{0}, K_{0}, A) \to (M', K'_{1}, \nu'_{0}, K'_{0}, A'), 
\]
we define the morphism $\cal{R}(T_{1}, T_{0}): \nu_{0} \otimes K_{0} \to \nu'_{0} \otimes K'_{0}$ via the formula 
\[
(h, k, z) \mapsto (T_{1}hT_{0}^{-1}, T_{0}(k), z). 
\]
This defines the functor $\mathcal{R}$. 

\begin{theorem} \label{RH}
The pair of functors $(\mathcal{L}, \mathcal{R})$ forms an adjoint equivalence of categories 
\[
\Rep(\bb{C} \ltimes \bb{A}, G) \cong F((\bb{C},0), G). 
\]
\end{theorem}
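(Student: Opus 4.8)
The plan is to establish the equivalence by showing that both composites $\mathcal{R}\circ\mathcal{L}$ and $\mathcal{L}\circ\mathcal{R}$ are naturally isomorphic to the identity, and then to adjust one of the resulting natural isomorphisms so that the triangle identities hold. The first step is to transport everything along the isomorphism of categories $D\colon\Rep(\bb{C}\ltimes\bb{A},G)\xrightarrow{\ \sim\ }\mathcal{JC}$ from Theorem~\ref{JCdecomp}. By construction $\mathcal{R}=D^{-1}\circ\widetilde{\mathcal{R}}$, where $\widetilde{\mathcal{R}}\colon F((\bb{C},0),G)\to\mathcal{JC}$ sends $(M,K_{1},\nu_{0},K_{0},A)$ to $(\nu_{0}\otimes K_{0},\Phi_{s},U)$; and $\mathcal{L}(P,\Phi)$ depends only on the last two components $(\Phi_{s},M_{u})$ of $D(P,\Phi)$, since $M(\Phi)=M_{u}(1)\,M(\Phi_{s})$ and $\Res(\Phi)=\Res(\Phi_{s})+N$ with $\exp(2\pi i N)=M_{u}(0)$. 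So it suffices to prove that $\widetilde{\mathcal{R}}$ and $\widetilde{\mathcal{L}}:=\mathcal{L}\circ D^{-1}$ are mutually inverse equivalences between $\mathcal{JC}$ and $F((\bb{C},0),G)$. This splits the verification into a ``semisimple monodromy'' layer and a ``unipotent automorphism'' layer.

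For the semisimple layer, restrict to objects $(P,\Psi,\mathrm{id})$ of $\mathcal{JC}$ and show that $\Psi\mapsto(P_{1},\nu_{0}(\Psi),P_{0},\Res(\Psi))$ and $(K_{1},\nu_{0},K_{0},S)\mapsto\nu_{0}\otimes K_{0}$ are inverse to one another. Since $\Psi$ has semisimple monodromy, its residue is semisimple (its exponential is the semisimple automorphism $M_{\Psi}(0)$, so the nilpotent part of $\Res(\Psi)$ must vanish), so write $\Res(\Psi)=S$. By Proposition~\ref{sslinearize}, $\Psi$ is then linearizable and $L(\Psi)$ is the trivial representation $\exp(\lambda S)$ on $P_{0}\times\bb{A}$; by Proposition~\ref{autofrep}, $\Aut_{0}(L(\Psi))=U_{\bb{N}}(S)$, so $\nu_{0}(\Psi)\subset\spc{Hom}_{G}(P_{0},P_{1})$ is exactly a reduction of structure to $U_{\bb{N}}(S)$. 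Any element of $\nu_{0}(\Psi)$ is a strict linearization $L(\Psi)\xrightarrow{\ \sim\ }\Psi$, through which $\Psi$ is recovered as the associated bundle $\nu_{0}(\Psi)\otimes P_{0}$ with precisely the formula defining $\Phi_{s}$. Conversely, starting from $(K_{1},\nu_{0},K_{0},S)$, one reads off from the construction of $\nu_{0}\otimes K_{0}$ that its fiber over $1$ is canonically $K_{1}$, its fiber over $0$ is canonically $K_{0}$ (the resonance group acts trivially there), its residue is $S$, and its space of strict linearizations is canonically $\nu_{0}$. All these identifications are canonical, hence natural.

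Now reinstate the unipotent datum. For an object $(P,\Psi,U)$ of $\mathcal{JC}$, the representation $\Phi=D^{-1}(P,\Psi,U)=\tau_{U}\circ\Psi$ has monodromy $M(\Phi)=U(1)\,M(\Psi)$, which lies in $\mathrm{St}(\nu(\Psi))$ once $\nu_{0}(\Psi)$ is used to identify the relevant automorphism groups, and residue $\Res(\Phi)=S+N$ with $\exp(2\pi i N)=U(0)$; restricting the monodromy to $z=0$ gives $\chi(M(\Phi))=\exp(2\pi i\,\Res(\Phi))$, which is the required compatibility in $F((\bb{C},0),G)$. Conversely, for $\widetilde{\mathcal{R}}$, the multiplicative JC decomposition $M=M_{s}M_{u}$ of an element $M\in\mathrm{St}(\nu)$ with $\chi(M)=\exp(2\pi i A)=\exp(2\pi i S)\exp(2\pi i N)$ forces $\chi(M_{u})=\exp(2\pi i N)$, and the formulas for $\Phi_{s}$ and $U$ in the definition of $\mathcal{R}$ precisely re-encode the data $(S,\nu_{0},K_{0})$ and the pair $(M_{u},N)$; one checks that $\Phi_{s}$ has semisimple monodromy with residue $S$ and that $U$ is a well-defined unipotent automorphism of it, so $\widetilde{\mathcal{R}}$ does land in $\mathcal{JC}$. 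Carrying through the canonical identifications of the previous paragraph shows $\widetilde{\mathcal{L}}\circ\widetilde{\mathcal{R}}\cong\mathrm{id}$ and $\widetilde{\mathcal{R}}\circ\widetilde{\mathcal{L}}\cong\mathrm{id}$, and that both are compatible with the morphisms $(T_{1},T_{0})$, which after the reductions act by $(h,k,z)\mapsto(T_{1}hT_{0}^{-1},T_{0}(k),z)$ together with conjugation on the $\mathrm{St}$-data. This yields natural isomorphisms $\mathrm{id}\Rightarrow\mathcal{R}\mathcal{L}$ and $\mathcal{L}\mathcal{R}\Rightarrow\mathrm{id}$; replacing the latter by the unique natural isomorphism making the triangle identities hold (always possible when both are isomorphisms) upgrades the equivalence to an \emph{adjoint} equivalence.

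The step I expect to be the main obstacle is the bookkeeping in the third paragraph: confirming that $\mathcal{R}(M,K_{1},\nu_{0},K_{0},A)$ genuinely has $P_{1}\cong K_{1}$, $P_{0}\cong K_{0}$, residue $A$, and unipotent monodromy component exactly $M_{u}$. This requires checking that the two presentations $\nu_{0}\otimes_{U_{\bb{N}}(S)}K_{0}$ and $\nu\otimes_{C_{\Aut_{G}(K_{0})}(S)\ltimes U_{\bb{N}}(S)}K_{0}$ of the associated bundle agree, and that the map $\chi$, the stabilizer identifications $\mathrm{St}(\nu_{0})\cong U_{\bb{N}}(S)$ and $\mathrm{St}(\nu)\cong C_{\Aut_{G}(K_{0})}(S)\ltimes U_{\bb{N}}(S)$, and the multiplicative and additive JC splittings all fit together coherently and functorially, rather than up to an uncontrolled automorphism.
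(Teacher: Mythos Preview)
Your proposal is correct and follows essentially the same approach as the paper: both construct the natural isomorphisms $\mathcal{R}\mathcal{L}\cong 1$ and $\mathcal{L}\mathcal{R}\cong 1$ via the evaluation map coming from $\nu_{0}(\Phi_{s})\subset\Hom{L(P),P}$ and the canonical identifications $(\nu_{0}\otimes K_{0})|_{0}\cong K_{0}$ (since $U_{\bb{N}}(S)$ acts trivially over $0$) and $(\nu_{0}\otimes K_{0})|_{1}\cong K_{1}$ (via evaluation). Your explicit passage through $\mathcal{JC}$ and the separation into semisimple and unipotent layers is a useful elaboration of what the paper only sketches, and your closing remark about adjusting one of the natural isomorphisms to secure the triangle identities fills in a detail the paper leaves implicit.
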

\begin{proof}
We will sketch the constructions of the natural isomorphisms $\mathcal{L} \mathcal{R} \cong 1$ and $\mathcal{R} \mathcal{L} \cong 1$. The remainder of the proof is then straightforward. First we construct the natural isomorphism $\eta : \cal{R} \cal{L} \to 1$. Given a representation $(P, \Phi)$, the representation $\cal{R} \cal{L}(P, \Phi)$ lives on the bundle 
\[
(\nu_{0}(\Phi_{s}) \times L(P) )/ \Aut_{0}(L(\Phi_{s})).
\]
Hence, using the fact that $\nu_{0}(\Phi_{s}) \subset \Hom{L(P), P}$, we define $\eta_{(P, \Phi)}$ to be the evaluation map to $P$. Second, we construct the natural isomorphism $\epsilon : \cal{L} \cal{R} \to 1$. The two $G$-torsors in the object $\cal{L}\cal{R}(M, K_{1}, \nu_{0}, K_{0}, A)$ are $(\nu_{0} \otimes K_{0})|_{1}$ and $(\nu_{0} \otimes K_{0})|_{0}$. Because the group $U_{\bb{N}}(S)$ acts trivially on the fibre above $0$ we have 
\[
(\nu_{0} \otimes K_{0})|_{0} = (\nu_{0}/U_{\bb{N}}(S)) \times K_{0} \cong K_{0}. 
\]
And because $\nu_{0} \subset \spc{Hom}_{G}(K_{0},K_{1})$, we get an evaluation map 
\[
(\nu_{0} \otimes K_{0})|_{1} \to K_{1}. 
\]
These two morphisms make up the component $\epsilon_{(M, K_{1}, \nu_{0}, K_{0}, A)}$.
\end{proof}

At the cost of a non-canonical equivalence, we can give a more explicit presentation of $F((\bb{C},0), G)$ by taking the full subcategory generated by the objects whose underlying $G$-torsors are trivial. Suppressing the trivial $G$-torsors from the notation, the set of objects of this subcategory is given by the set $\cal{B}$ of tuples $(M, hU_{\bb{N}}(S), A = S + N)$, where $hU_{\bb{N}}(S)$ is a left coset, and such that $M \in h (C_{G}(S) \ltimes U_{\bb{N}}(S)) h^{-1}$, and $\chi(h^{-1} M h) = \exp(2 \pi i A)$. The morphisms in this category with a fixed source object are given by $G \times G$, where a pair $(k,g)$ corresponds to the morphism $(T_{1}, T_{0}) = (kg, g)$. The pair $(k,g)$ acts on an object as follows: \[(k,g) \ast (M, hU_{\bb{N}}(S), A) = ( (kg)M(kg)^{-1}, k(ghg^{-1})U_{\bb{N}}(Ad(g)(S)), Ad(g)(A)).\] Therefore we have the equivalence
\[
F((\bb{C},0), G) \cong (G \rtimes G) \ltimes \cal{B},
\]
where the right-hand side is an action groupoid, and $G \rtimes G$ is the semi-direct product for the conjugation action of $G$ on itself. 

Now consider the full subcategory where the residue is fixed to be $A \in \frak{g}$. The objects of this sub-category can be taken to have the form $(M, U_{\bb{N}}(S), A = S + N)$, where only the element $M$ is allowed to vary. This set of objects is parametrised by $T = \{ M \in C_{G}(S) \ltimes U_{\bb{N}}(S) \ | \ \chi(M) = \exp(2 \pi i A) \}$. Therefore, the subcategory is equivalent to the action groupoid
\[
(C_{G}(A) \ltimes U_{\bb{N}}(S)) \ltimes T,
\]
where $C_{G}(A)$ is the centralizer of $A$. By taking isomorphism classes, we recover the following classification result. 
\begin{corollary} \label{EquivNormalForm}
There is a bijection between isomorphism classes of flat logarithmic $G$-connections on the affine line whose residue is conjugate to a fixed element $A = S + N \in \frak{g}$, and the set of elements $M \in  C_{G}(S) \ltimes U_{\bb{N}}(S)$ such that $\chi(M) = \exp( 2 \pi i A)$, modulo conjugation by $(C_{G}(A) \ltimes U_{\bb{N}}(S))$. 
\end{corollary}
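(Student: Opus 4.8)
The plan is to obtain the corollary as a straightforward decategorification of Theorem \ref{RH}, namely by taking isomorphism classes of objects in the appropriate full subcategory of $F((\bb{C},0),G)$ and transporting them across the equivalence $(\mathcal{L},\mathcal{R})$. First I would invoke the non-canonical equivalence $F((\bb{C},0),G) \cong (G \rtimes G) \ltimes \cal{B}$ established just before the corollary, which replaces $F((\bb{C},0),G)$ by the full subcategory whose underlying $G$-torsors $K_{1}, K_{0}$ are trivial; since every torsor over a point is trivializable, this subcategory is equivalent to the whole category, so it carries the same set of isomorphism classes. Composed with Theorem \ref{RH}, this gives a bijection between isomorphism classes of flat logarithmic $G$-connections on the affine line and isomorphism classes of objects in $(G \rtimes G) \ltimes \cal{B}$.

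Next I would cut down to the residue-$A$ locus. An object $(M, hU_{\bb{N}}(S), A' = S' + N')$ of $\cal{B}$ has residue conjugate to $A = S+N$ precisely when $A' = Ad(g)(A)$ for some $g \in G$; using the $G \rtimes G$-action (specifically the pair $(1, g^{-1})$, which sends $S'$ to $Ad(g^{-1})(S') = S$ and correspondingly conjugates $h$) every such object is isomorphic to one of the form $(M, hU_{\bb{N}}(S), A=S+N)$. Among these, a further application of the action by pairs $(1,g)$ with $g \in C_{G}(S)$ — which fixes $S$, hence fixes $U_{\bb{N}}(S)$ — can be used to absorb the coset representative $h$: replacing $h$ by $g^{-1}h$ shows every object with residue exactly $A$ is isomorphic to one with $h=1$, i.e.\ of the form $(M, U_{\bb{N}}(S), A)$. (Here one must check that the condition $M \in h(C_{G}(S)\ltimes U_{\bb{N}}(S))h^{-1}$, $\chi(h^{-1}Mh) = \exp(2\pi i A)$ is exactly what survives: with $h=1$ it becomes $M \in C_{G}(S)\ltimes U_{\bb{N}}(S)$ and $\chi(M) = \exp(2\pi i A)$, which is the set $T$ in the statement.) Thus the residue-$A$ objects, up to isomorphism, are parametrised by $T$ modulo the residual symmetries that preserve the normal form $h=1$, $A$ fixed.

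The final step is to identify this residual symmetry group. A morphism $(T_{1},T_{0}) = (kg,g)$ of $F((\bb{C},0),G)$ preserves $A = S+N$ iff $Ad(g)(A) = A$, i.e.\ $g \in C_{G}(A)$; it preserves the coset $U_{\bb{N}}(S)$ (with $h=1$) iff $k(ghg^{-1})U_{\bb{N}}(Ad(g)(S)) = U_{\bb{N}}(S)$, which since $g \in C_{G}(A) \subseteq C_{G}(S)$ reduces to $k \in U_{\bb{N}}(S)$; and then it acts on $M$ by $M \mapsto (kg)M(kg)^{-1}$. Since $C_{G}(A)$ normalises $U_{\bb{N}}(S)$ (as $C_{G}(A) \subseteq C_{G}(S)$ normalises $U_{\bb{N}}(S)$), the set of such pairs $(k,g)$ forms the group $C_{G}(A) \ltimes U_{\bb{N}}(S)$ acting on $T$ by conjugation through $kg$. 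This is precisely the action groupoid $(C_{G}(A) \ltimes U_{\bb{N}}(S)) \ltimes T$ appearing before the corollary, and taking $\pi_{0}$ yields the asserted bijection. The only mildly delicate point — and the step I expect to require the most care — is verifying that the stabiliser computations and the normal-form reductions interact correctly, i.e.\ that fixing $A$ and the coset $h=1$ genuinely leaves exactly $C_{G}(A)\ltimes U_{\bb{N}}(S)$ and not some larger or smaller subgroup; this is bookkeeping with the semidirect-product structure of $P(a) = C_{G}(S)\ltimes U_{\bb{N}}(S)$ from Proposition \ref{autofrep}, but it must be done honestly. Everything else is formal transport along the equivalences.
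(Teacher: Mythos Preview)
Your proposal follows the paper's approach exactly: pass through the equivalence $F((\bb{C},0),G) \cong (G \rtimes G) \ltimes \cal{B}$, restrict to residues conjugate to $A$, normalize to $h = 1$, and compute the residual symmetry group. The final stabilizer computation is correct and matches the paper's identification of the action groupoid $(C_{G}(A) \ltimes U_{\bb{N}}(S)) \ltimes T$.

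There is, however, a slip in your normalization of $h$. You propose acting by pairs $(1,g)$ with $g \in C_{G}(S)$, claiming this replaces $h$ by $g^{-1}h$. But from the action formula
\[
(k,g) \ast (M, hU_{\bb{N}}(S), A) = \big((kg)M(kg)^{-1},\, k(ghg^{-1})U_{\bb{N}}(Ad(g)(S)),\, Ad(g)(A)\big),
\]
the pair $(1,g)$ sends the coset representative to $ghg^{-1}$ (not $g^{-1}h$) and moves the residue to $Ad(g)(A)$, so if $g \in C_{G}(S) \setminus C_{G}(A)$ you have left the residue-$A$ locus. The correct move is to act by $(h^{-1}, 1)$: this fixes $A$, sends the coset to $h^{-1} h\, U_{\bb{N}}(S) = U_{\bb{N}}(S)$, and conjugates $M$ to $h^{-1}Mh$, which then lies in $C_{G}(S) \ltimes U_{\bb{N}}(S)$ with $\chi(h^{-1}Mh) = \exp(2\pi i A)$ as required. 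With this correction the argument goes through and coincides with the paper's.
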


\begin{remark}
We can write an element $M \in T$ in its JC decomposition as $\exp(2 \pi i S) \exp(2 \pi i N')$, where $N' = \sum_{i \geq 0} N_{i}$ is nilpotent, and $[S,N_{i}] = iN_{i}$. Applying the functor $\cal{R}$ from Theorem \ref{RH} and taking the associated differential equation yields the connection $1$-form
\[
A(z) = S + \sum_{i \geq 0} z^{i}N_{i},
\]
which is in Levelt normal form. Therefore, we obtain a new proof of the Levelt normal form. Furthermore, Corollary \ref{EquivNormalForm} can be understood as describing the holomorphic equivalence of different Levelt normal forms. This recovers the result of \cite{kleptsyn2004analytic} in the case $G = GL(n,\mathbb{C})$, as well as the result \cite[Theorem 8.5]{babbitt1983formal} for complex reductive $G$.  
\end{remark}

There is a redundancy in the definition of the category $F((\bb{C},0), G)$. Let $E((\bb{C},0), G)$ be the category whose objects $(M, K_{1}, \mu_{0}, K_{0}, A)$ are defined in the same way as $F((\bb{C},0), G)$, except that $\mu_{0}$ is a right $U(a)$-torsor, where $A = a + ib + N$ is the decomposition with $a, b$ real semisimple, $N$ nilpotent, and all mutually commuting. There is an obvious functor from $F((\bb{C},0), G)$ to $E((\bb{C},0), G)$, which replaces the $U_{\bb{N}}(S)$-torsor $\nu_{0}$ with its extension to a $U(a)$-torsor. Conversely, given the $U(a)$-torsor $\mu_{0}$ from an object of $E((\bb{C},0), G)$, there is a natural way to reduce it to a $U_{\bb{N}}(S)$-torsor, for $S = a + ib$. Namely, we define 
\[
\nu_{0} = \{ h \in \mu_{0} \ | \ h^{-1}M_{s}h = \exp(2 \pi i S) \},
\]
where $M_{s}$ is the semisimple component of $M$. Hence, the categories $F((\bb{C},0),G)$ and $E((\bb{C},0), G)$ are equivalent. 

By looking at the isomorphism classes of objects in $E((\bb{C},0), G)$ we recover the classification result of \cite[Theorem A]{boalch2011riemann}. Let $\mathcal{C} \subseteq C_{G}(a)$ be the conjugacy class of $\exp(2\pi i A)$. By \cite[Lemma 1]{boalch2011riemann}, this canonically determines a conjugacy class in the reductive quotient of any parabolic subgroup of $G$ conjugate to $P(a)$. We use the same notation $\mathcal{C}$ to denote all of these conjugacy classes. 

\begin{corollary}\cite[Theorem A]{boalch2011riemann}
There is a bijection between isomorphism classes of flat logarithmic $G$-connections on the affine line whose residue is conjugate to a fixed element $A = a + ib + N \in \frak{g}$, and conjugacy classes of pairs $(M, P)$, where $P$ is a parabolic conjugate to $P(a)$, and $M \in P$ such that $\chi(M) \in \mathcal{C}$. 
\end{corollary}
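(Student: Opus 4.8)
The plan is to deduce the statement from Theorem~\ref{RH} together with the equivalence $F((\bb{C},0),G)\cong E((\bb{C},0),G)$, and then to translate the resulting group-theoretic data into pairs $(M,P)$. First I would reduce to a combinatorial orbit set. By Lie's second theorem, isomorphism classes of flat logarithmic $G$-connections on $\bb{A}$ correspond to isomorphism classes of objects of $\Rep(\bb{C}\ltimes\bb{A},G)$, hence --- via Theorem~\ref{RH} followed by the equivalence $F((\bb{C},0),G)\cong E((\bb{C},0),G)$ --- to isomorphism classes of objects of $E((\bb{C},0),G)$ whose residue is conjugate to $A=a+ib+N$. Now I run the same chain of full-subcategory reductions that passes from Theorem~\ref{RH} to Corollary~\ref{EquivNormalForm}, but carried out inside $E((\bb{C},0),G)$, i.e. with $U(a)$ in place of $U_{\bb{N}}(S)$: one trivialises the underlying torsors, normalises the residue to equal $A$ exactly and the $U(a)$-torsor $\mu_{0}$ to equal $U(a)\subseteq G$, and reads off the remaining automorphisms. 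This identifies the relevant isomorphism classes with the orbit set $\mathcal{T}/(C_{G}(A)\ltimes U(a))$, where
\[
\mathcal{T}=\set{M\in P(a)}{\chi(M)=\exp(2\pi i A)},
\]
$\chi\colon P(a)\to C_{G}(a)$ is the canonical projection $\chi(g)=\lim_{z\to 0}z^{a}gz^{-a}$, and $C_{G}(A)\ltimes U(a)\subseteq P(a)$ acts on $\mathcal{T}$ by conjugation in $G$.

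Next I would match $\mathcal{T}/(C_{G}(A)\ltimes U(a))$ with conjugacy classes of admissible pairs by sending $M\in\mathcal{T}$ to $(M,P(a))$: since $\chi(M)=\exp(2\pi i A)$ represents $\mathcal{C}$, this is an admissible pair, and conjugating $M$ by an element of $C_{G}(A)\ltimes U(a)\subseteq G$ replaces $(M,P(a))$ by a $G$-conjugate pair, so the assignment descends to a map $\mathcal{T}/(C_{G}(A)\ltimes U(a))\to\{\text{admissible pairs}\}/G$. For surjectivity: given $(M,P)$ with $P=gP(a)g^{-1}$, the element $g^{-1}Mg$ lies in $P(a)$ with $\chi(g^{-1}Mg)$ in the $C_{G}(a)$-conjugacy class of $\exp(2\pi i A)$ --- this is where \cite[Lemma 1]{boalch2011riemann} is invoked, to know that $\mathcal{C}$ is transported between conjugate parabolics compatibly with $\chi$ --- so a further conjugation by an element of $C_{G}(a)\subseteq P(a)$ brings the representative into $\mathcal{T}$. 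For injectivity: if $M_{1},M_{2}\in\mathcal{T}$ give $G$-conjugate pairs, the conjugating element $g$ normalises $P(a)$, hence $g\in N_{G}(P(a))=P(a)$ since parabolics are self-normalising; writing $g=cu$ with $c\in C_{G}(a)$ and $u\in U(a)$, the identity $\exp(2\pi i A)=\chi(M_{2})=\chi(g)\chi(M_{1})\chi(g)^{-1}=c\,\exp(2\pi i A)\,c^{-1}$ forces $c\in C_{C_{G}(a)}(\exp(2\pi i A))$, and this group equals $C_{G}(A)$, so $g\in C_{G}(A)\ltimes U(a)$ and $M_{1},M_{2}$ already represent the same orbit. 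Assembling the two steps gives the claimed bijection.

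The main obstacle is the identity $C_{C_{G}(a)}(\exp(2\pi i A))=C_{G}(A)$, the only non-formal point in the argument: the inclusion $\supseteq$ is immediate, while for $\subseteq$ one uses that $\exp(2\pi i a)$ is central in $C_{G}(a)$ (as $a$ commutes with all of $C_{G}(a)$), that $\exp(-2\pi b)\exp(2\pi i N)$ is the multiplicative Jordan--Chevalley decomposition of $\exp(2\pi i A)\exp(2\pi i a)^{-1}$, and that the exponential is injective on real semisimple elements, in order to conclude that any $c\in C_{G}(a)$ commuting with $\exp(2\pi i A)$ must commute with each of $a$, $b$ and $N$. Beyond this, the work is bookkeeping: performing the full-subcategory reductions of the first step inside $E((\bb{C},0),G)$ exactly as was done for $F((\bb{C},0),G)$, and verifying that $(C_{G}(A)\ltimes U(a))$-conjugacy on $\mathcal{T}$ corresponds precisely to $G$-conjugacy of pairs under the map of the second step.
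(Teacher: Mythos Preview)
Your proposal is correct and follows essentially the same strategy as the paper's proof. Both arguments reduce via Theorem~\ref{RH} and the equivalence $F((\bb{C},0),G)\cong E((\bb{C},0),G)$ to a concrete action-groupoid presentation, and then compare this with $G$-conjugacy classes of pairs $(M,P)$. The paper keeps the coset $hU(a)$ in the data, obtaining a set $Q$ acted on by $G\rtimes C_G(A)$, and constructs a functor of action groupoids $(G\rtimes C_G(A))\ltimes Q\to G\ltimes S$ whose full faithfulness comes down to the equality of stabilizers $C_G(M)\cap(U(a)\rtimes C_G(A))=C_G(M)\cap P(a)$; you normalize one step further (setting $h=e$) to reach the orbit set $\mathcal{T}/(C_G(A)\ltimes U(a))$ and argue surjectivity/injectivity of the map $M\mapsto(M,P(a))$ directly. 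These are two packagings of the same computation: your injectivity step (the conjugating element lies in $C_G(A)\ltimes U(a)$) is exactly the paper's stabilizer identity, decategorified. Your explicit justification of the identity $C_{C_G(a)}(\exp(2\pi i A))=C_G(A)$ in fact supplies a detail the paper asserts without argument.
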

\begin{proof}
Let $S$ be the collection of pairs $(M, P)$ as described above. The group $G$ acts on $S$ by conjugation, and we can define an action groupoid $G \ltimes S$. Let $Q$ be the set of pairs $(M, hU(a))$ such that $M \in hP(a)h^{-1}$ and $\chi(h^{-1}Mh) = \exp(2\pi i A)$. The group $G \rtimes C_{G}(A)$ acts on $Q$, and the corresponding action groupoid is equivalent to the full subcategory of $E((\bb{C},0), G)$ consisting of elements whose residue is conjugate to $A$. By Theorem \ref{RH}, this category is equivalent to the category of logarithmic connections with residue conjugate to $A$. Therefore, it suffices to provide an equivalence between $G \ltimes S$ and $(G \rtimes C_{G}(A) ) \ltimes Q$. There is a natural functor given as follows
\begin{align*}
F : (G \rtimes C_{G}(A) ) \ltimes Q &\to G \ltimes S \\
 ( (k,g), (M, hU(a)) ) &\mapsto (kg, (M, hP(a)h^{-1})),
\end{align*}
and it is straightforward to see that it is essentially surjective. To show that $F$ is fully faithful, it suffices to show that it induces an isomorphism between stabilizer groups. But the stabilizer of $(M, U(a))$ is $C_{G}(M) \cap (U(a) \rtimes C_{G}(A))$, the stabilizer of $(M, P(a))$ is $C_{G}(M) \cap P(a) = C_{G}(M) \cap ( U(a) \rtimes C_{G}(a) )$, and these are equal. 
\end{proof}
\begin{remark}
Theorem \ref{RH}, stated in terms of the category $E((\mathbb{C}, 0), G)$, can also be used to recover the equivalence between logarithmic connections and filtered local systems from \cite{simpson1990harmonic, boalch2011riemann}. 
\end{remark}

\section{Morita equivalence}
Morita equivalence is an equivalence relation between Lie groupoids that is weaker than isomorphism. The approach that we take here, originally introduced in \cite{hilsum1987morphismes, haefliger1984groupoides, pradines1989morphisms}, is based on the use of principal groupoid bibundles.

Let $\cal{G}$ be a Lie groupoid over $M$, with target and source maps denoted by $t$ and $s$ respectively. This can act on a manifold $X$ equipped with a \emph{moment map} $\phi : X \to M$. Namely, a left action of $\cal{G}$ on $X$ is given by a map 
\[
\theta : \cal{G} \times_{s,\phi} X \to X, \qquad (g, x) \mapsto g.x, 
\]
subject to the compatibility conditions that $\phi(g.x) = t(g)$, $g.(g'.x) = (gg').x$ where defined, and $\epsilon(\phi(x)).x = x$, where $\epsilon : M \to \cal{G}$ is the embedding of identity arrows. A representation of $\cal{G}$ on a principal $G$-bundle $P \to M$ is equivalent to a left action of $\cal{G}$ on $P$ such that the action on the fibres is $G$-equivariant.

A left $\cal{G}$-bundle is defined by the data of a surjective submersion $\pi : Q \to N$ and a left action of $\cal{G}$ on $Q$ which preserves the fibers of $\pi$. The bundle is \emph{principal} if the action of $\cal{G}$ is free and fibrewise transitive, meaning that the following map is an isomorphism
\[
\cal{G} \times_{M} Q \to Q \times_{N} Q, \qquad (g,q) \mapsto (g.q, q). 
\]
The notions of right actions and right principal bundles are defined analogously. 

\begin{definition}\label{ME}
A \emph{Morita equivalence} between Lie groupoids $\cal{G}$ over $M$ and $\cal{H}$ over $N$ is a bi-principal $(\cal{G}, \cal{H})$ bi-bundle. This consists of a span
\[
\xymatrixcolsep{3pc} \xymatrix { & Q \ar[dl]^{p}\ar[dr]_{q}& \\ 
M & & N
}
\]
such that
\begin{enumerate}
\item $q : Q \to N$ is a left principal $\cal{G}$-bundle with moment map $p$;
\item $p : Q \to M$ is a right principal $\cal{H}$-bundle with moment map $q$;
\item the actions of $\cal{G}$ and $\cal{H}$ are compatible in the sense that $(g.z).h = g.(z.h)$, where $z \in Q, g \in \cal{G}, h \in \cal{H}$ and both actions are defined.
\end{enumerate}
We say that the groupoids $\cal{G}$ and $\cal{H}$ are \emph{Morita equivalent}. 
\end{definition}

The significance of Morita equivalences in this paper is that they provide a method for constructing equivalences between categories of groupoid representations.

\begin{proposition} \label{repeq}
Let $G$ be a Lie group. A Morita equivalence $Q$ between groupoids $\cal{G}$ and $\cal{H}$ induces an equivalence between their categories of representations
\[
\Rep(\cal{H}, G) \cong  \Rep(\cal{G},G).
\]
\end{proposition}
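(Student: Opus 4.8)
The plan is to construct the equivalence of categories directly from the bibundle $Q$ by a "tensor product" (or associated bundle) construction, mimicking the classical Morita theory for rings and the known statement that Morita equivalent groupoids have equivalent categories of ordinary representations. Given a representation of $\cal{H}$ on a principal $G$-bundle $P \to N$, which we view as a left $\cal{H}$-action on $P$ that is $G$-equivariant on fibres, I would form the quotient $Q \times_N P$ by the diagonal $\cal{H}$-action $(z,p) \cdot h = (z.h, h^{-1}.p)$, using that $p : Q \to M$ is a right principal $\cal{H}$-bundle so the action is free and proper and the quotient is a manifold. The residual left $\cal{G}$-action on $Q$ (which commutes with $\cal{H}$ by compatibility) and the residual right $G$-action (which commutes with $\cal{H}$ since $\cal{H}$ acts $G$-equivariantly) descend to the quotient, making $(Q \times_N P)/\cal{H}$ a principal $G$-bundle over $M$ equipped with a fibrewise $G$-equivariant left $\cal{G}$-action, i.e.\ a representation of $\cal{G}$. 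This construction is evidently functorial in morphisms of representations, so it defines a functor $\Rep(\cal{H},G) \to \Rep(\cal{G},G)$.

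Next I would verify that the $G$-bundle structure really is preserved: one must check that $\cal{G}$ acts fibrewise transitively and freely over $M$ on the quotient, which follows because $q : Q \to N$ is left principal for $\cal{G}$, so $\cal{G} \times_M Q \to Q \times_N Q$ is an isomorphism, and this descends through the $\cal{H}$-quotient. Symmetrically, the opposite bibundle $Q$ viewed with the roles of $\cal{G}$ and $\cal{H}$ interchanged (a Morita equivalence is symmetric) yields a functor $\Rep(\cal{G},G) \to \Rep(\cal{H},G)$ by the same recipe. To see these are quasi-inverse, I would use the standard fact that the "tensor product" $Q \otimes_{\cal{H}} \bar{Q}$ of $Q$ with its opposite bibundle over $\cal{H}$ is isomorphic, as a $(\cal{G},\cal{G})$-bibundle, to the unit bibundle $\cal{G}$ itself (and symmetrically over $\cal{G}$); tensoring a representation with the unit bibundle returns the representation up to canonical isomorphism. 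Chasing through these canonical isomorphisms gives natural isomorphisms between the composite functors and the identities.

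The main obstacle I expect is purely the bookkeeping of the two commuting actions and the smooth-manifold structure on the iterated quotients: one has to be careful that the $\cal{H}$-action on $Q \times_N P$ is free and proper (so the quotient is a manifold and the quotient map is a submersion), that the $\cal{G}$- and $G$-actions genuinely descend, and that principality of the $\cal{G}$-bundle over $M$ on the quotient follows from principality of $Q$ as a left $\cal{G}$-bundle. None of these steps is deep — they are the groupoid analogue of the classical Morita theorem — so I would state them and indicate the verifications rather than write them all out. For the reader's convenience I would note that one may alternatively package the whole argument by the equivalence $\Rep(\cal{G},G) \simeq \Hom(\cal{G}, \cal{G}(\text{pt})\text{-bundles})$ and the fact that the bicategory of Lie groupoids, bibundles, and bibundle morphisms sends Morita equivalences to equivalences, but the direct construction above is the more self-contained route.
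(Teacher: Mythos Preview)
Your proposal is correct and follows essentially the same route as the paper: the paper also constructs the functor by forming $q^{*}P = Q \times_{N} P$, quotienting by the diagonal $\cal{H}$-action $(q,v).h = (q.h, h^{-1}.v)$, and observing that the residual $\cal{G}$- and $G$-actions descend to make $Q \otimes P := (Q \times_{N} P)/\cal{H}$ a $\cal{G}$-representation on a principal $G$-bundle over $M$. In fact the paper's proof is terser than yours, only describing the functor on objects in one direction and leaving the inverse and the natural isomorphisms implicit.
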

\begin{proof}
The equivalence is given by a pull-push procedure. We describe the functor on objects in one direction. Consider a representation of $\cal{H}$ on the principal $G$-bundle $P \to N$. Pulling back along $q$ we get the principal bundle $q^{*}P = Q \times_{N} P$ over $Q$. This has a right $\cal{H}$ action given by 
\[
(q,v).h := (q.h, h^{-1}.v),
\]
and a left $\cal{G}$ action given by 
\[
g.(q,v) = (g.q, v).
\]
These actions are compatible, $G$-equivariant, and lift the respective actions on $Q$. Hence, taking the quotient by $\cal{H}$ gives a principal $G$-bundle $Q \otimes P := (Q \times_{N} P)/\cal{H}$ over $M$, and it inherits a left action of $\cal{G}$, making it into a representation of $\cal{G}$. 
%
%
\end{proof}
\begin{example}
The fundamental groupoid $\Pi(M)$ of a manifold $M$ is Morita equivalent to the fundamental group $\pi(M,m)$ based at a point $m \in M$. The Morita equivalence is given by the universal cover $\widetilde{M}$, defined as the space of homotopy classes of paths starting at the point $m$. The groupoid actions on $\widetilde{M}$ are given by concatenation of paths. Applying Proposition \ref{repeq} yields the equivalence of categories 
\[
\Rep(\Pi(M), G) \cong \Rep(\pi(M,m), G).
\]
When combined with the equivalence $\Rep(\Pi(M), G) \cong \Rep(TM, G)$ given by Lie's second theorem, this recovers the classical Riemann-Hilbert correspondence. 
\end{example}

Our main tool for constructing Morita equivalences is the following result, whose proof is left to the reader. 

\begin{lemma} \label{MRL} Let $\cal{G} \rightrightarrows M$ be a Lie groupoid, with target and source maps denoted $t$ and $s$ respectively, and let $N \subseteq M$ be an embedded submanifold. If $t|_{s^{-1}(N)} : s^{-1}(N) \to M$ is a surjective submersion,
then $\cal{G}|_{N} = t^{-1}(N) \cap s^{-1}(N)$ is a Lie subgroupoid of $\cal{G}$, which is Morita equivalent to $\cal{G}$ via the following Morita equivalence 
\[
\xymatrixcolsep{3pc} \xymatrix { { \cal{G}|_{N}} \ar@<-0.5ex>[d]\ar@<0.5ex>[d]  &
t^{-1}(N)\ar[dl]^{t}\ar[dr]_{s} &
{\cal{G}} \ar@<-0.5ex>[d]\ar@<0.5ex>[d]\\
N & & M
 }
\]
\end{lemma}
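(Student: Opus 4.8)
\textbf{Proof plan for Lemma \ref{MRL}.}

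The plan is to verify the three defining conditions of a Morita equivalence (Definition \ref{ME}) for the span with apex $Q = t^{-1}(N)$, moment maps $t : Q \to N$ and $s : Q \to M$, carrying the obvious left $\cal{G}|_N$-action and right $\cal{G}$-action by groupoid multiplication. The first task, before that, is to check that $\cal{G}|_N = t^{-1}(N)\cap s^{-1}(N)$ is actually a Lie subgroupoid, i.e.\ that the intersection is a smooth submanifold and the structure maps restrict appropriately. For this I would use the hypothesis that $t|_{s^{-1}(N)} : s^{-1}(N)\to M$ is a surjective submersion: since $s^{-1}(N)$ is a submanifold of $\cal{G}$ (as $s$ is a submersion and $N$ is embedded), the preimage $(t|_{s^{-1}(N)})^{-1}(N)$ of the embedded submanifold $N$ under a submersion is again an embedded submanifold, and this preimage is exactly $\cal{G}|_N$. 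Symmetrically $t^{-1}(N)$ is a submanifold since $t$ is a submersion. The source and target maps of $\cal{G}$ restrict to $\cal{G}|_N$ with image in $N$ by construction, multiplication and inversion restrict since $N$ is $t$- and $s$-saturated within $\cal{G}|_N$ by definition, so $\cal{G}|_N \rightrightarrows N$ is a Lie groupoid.

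Next I would set up the actions on $Q = t^{-1}(N)$. The right $\cal{G}$-action is right multiplication: $q\cdot g := qg$, defined when $s(q) = t(g)$, with moment map $s : Q \to M$; it preserves the fibres of $t : Q \to N$ since $t(qg) = t(q)$. The left $\cal{G}|_N$-action is left multiplication $h\cdot q := hq$, defined when $s(h) = t(q)$ — note $t(q)\in N$, which is why we need the source of $h$ to lie in $N$, i.e.\ why the acting groupoid is $\cal{G}|_N$ and not all of $\cal{G}$; its moment map is $s : Q \to M$ and it preserves the fibres of the other moment map $t: Q\to N$ since $t(hq) = t(h) \in N$. Compatibility $(h\cdot q)\cdot g = h\cdot(q\cdot g)$ is just associativity of groupoid multiplication.

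Then I would verify principality. For the right $\cal{G}$-action: it is free because $qg = q$ forces $g = s(q)$ by the groupoid axioms, and the map $Q\times_M \cal{G}\to Q\times_N Q$, $(q,g)\mapsto(qg,q)$, is a diffeomorphism with inverse $(q',q)\mapsto (q, q^{-1}q')$ — note that if $t(q')=t(q)$ then $q^{-1}q'$ is defined and $t(q^{-1}q')=s(q)$; smoothness of the inverse uses smoothness of multiplication and inversion in $\cal{G}$, and one checks $s : Q\to M$ is a surjective submersion (surjectivity because every $m\in M$ is $s(g)$ for some $g$ with $t(g)\in N$, using surjectivity of $t|_{s^{-1}(N)}$ — actually one wants: for $m\in M$ there is $g$ with $s(g)=m$, $t(g)\in N$, which follows by taking $g^{-1}$ for a suitable element produced by the surjectivity hypothesis; submersivity because $s$ is a submersion on all of $\cal{G}$). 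For the left $\cal{G}|_N$-action: freeness is identical, and fibrewise transitivity of $\cal{G}|_N\curvearrowright Q$ over $N$ — the map $\cal{G}|_N\times_N Q\to Q\times_N Q$, $(h,q)\mapsto(hq,q)$, being an isomorphism — is where the surjective-submersion hypothesis does its real work: given $q,q'\in Q$ with $t(q)=t(q')=:n\in N$, the element $q'q^{-1}$ satisfies $t(q'q^{-1})=t(q')=n\in N$ and $s(q'q^{-1})=t(q)=n\in N$, so $q'q^{-1}\in\cal{G}|_N$; smoothness of $(q',q)\mapsto q'q^{-1}$ gives the smooth inverse. One should also confirm $t : Q\to N$ is a surjective submersion, which is exactly the restatement of the hypothesis $t|_{s^{-1}(N)}: s^{-1}(N)\to M$ surjective submersion after passing to the appropriate restriction — more directly, $t|_{t^{-1}(N)}: t^{-1}(N)\to N$ is a submersion since $t$ is, and it is surjective since $N\subseteq M$ and every point of $M$, a fortiori of $N$, is hit by $t$.

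The main obstacle I expect is bookkeeping around the hypothesis: making precise exactly where ``$t|_{s^{-1}(N)}$ is a surjective submersion'' is needed versus where the generic submersivity of $s$ and $t$ suffices, and ensuring all the domains of the various restricted multiplication maps are themselves manifolds (fibre products along submersions), so that the ``isomorphism'' claims in the definition of principal bundle are claims about smooth maps between honest manifolds. None of it is deep, but the symmetry between the two sides is slightly deceptive — the right $\cal{G}$-action is principal essentially formally, whereas the left $\cal{G}|_N$-action's fibrewise transitivity is precisely what forces the surjective-submersion assumption, so I would be careful to isolate that point.
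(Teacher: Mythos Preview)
The paper leaves this lemma to the reader, so your plan of directly verifying the bibundle axioms is the intended route, and the overall structure is right. However, there is a genuine muddle in your treatment of the left $\cal{G}|_{N}$-action that would need repair.

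For the left action $h\cdot q = hq$ on $Q = t^{-1}(N)$, the moment map is $t : Q \to N$ (since the action is defined when $s(h) = t(q)$) and the fibres preserved are those of $s : Q \to M$ (since $s(hq) = s(q)$). You have these swapped. Consequently, the principality condition you should be checking is that $\cal{G}|_{N} \times_{N} Q \to Q \times_{M} Q$, $(h,q)\mapsto (hq,q)$, is an isomorphism --- not $Q \times_{N} Q$. With your stated condition $t(q) = t(q')$, the product $q'q^{-1}$ is simply not defined in the groupoid (you would need $s(q') = t(q^{-1}) = s(q)$). Under the correct condition $s(q) = s(q')$, the element $h = q'q^{-1}$ is defined, and one sees $t(h) = t(q') \in N$, $s(h) = t(q) \in N$, so $h \in \cal{G}|_{N}$; fibrewise transitivity then follows formally from the groupoid axioms, just as it did on the right.

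This also means your diagnosis of \emph{where} the hypothesis is needed is slightly off. Fibrewise transitivity of the left action is automatic; the hypothesis is used in two places. First, to show $\cal{G}|_{N} = (t|_{s^{-1}(N)})^{-1}(N)$ is a submanifold, as you correctly argue. Second, and this is the point you nearly reach via your parenthetical about inverting arrows: the inversion map carries $s^{-1}(N)$ diffeomorphically onto $t^{-1}(N) = Q$ and swaps $t$ and $s$, so the hypothesis that $t|_{s^{-1}(N)} : s^{-1}(N) \to M$ is a surjective submersion is exactly the statement that the bundle map $s : Q \to M$ for the left action is a surjective submersion. That is what makes $Q \times_{M} Q$ a manifold and the left principal bundle structure well-posed. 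Once you correct the moment-map/bundle-map labeling and use inversion to transport the hypothesis to $Q$, the argument goes through.
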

It will be useful to have a criterion, phrased in terms of the Lie algebroid of $\cal{G}$, for checking that a submanifold $N \subseteq M$ satistifies the condition of Lemma \ref{MRL}. To this end, we recall a few definitions. First let $A \to M$ be the Lie algebroid of $\cal{G}$, and denote its anchor $\rho: A \to TM$. We say that a submanifold $N \subseteq M$ is \emph{transverse} to $A$ if $T_{z}M = T_{z}N + \im \rho_{z}$ for every point $z \in N$. Second, two points $z_{1}, z_{2} \in M$ lie in the same \emph{orbit} of $\cal{G}$ if there is an arrow $g \in \cal{G}$ such that $s(g) = z_{1}$ and $t(g) = z_{2}$. This defines an equivalence relation on $M$, and $M$ is partitioned into the orbits of $\cal{G}$. 

\begin{proposition}[Criterion for Morita equivalent subgroupoid] \label{criterion} Let $\cal{G} \rightrightarrows M$ be a Lie groupoid with Lie algebroid $A \to M$, and let $N \subseteq M$ be an embedded submanifold. If $N$ intersects every orbit of $\cal{G}$ and is transverse to $A$, then $\cal{G}|_{N}$ is a Lie subgroupoid of $\cal{G}$, which is Morita equivalent to $\cal{G}$.
\end{proposition}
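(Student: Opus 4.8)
The plan is to reduce Proposition~\ref{criterion} to Lemma~\ref{MRL} by verifying that the two hypotheses there---``$N$ intersects every orbit'' and ``$N$ is transverse to $A$''---together imply that $t|_{s^{-1}(N)} : s^{-1}(N) \to M$ is a surjective submersion. First I would observe that surjectivity of this map is immediate from the orbit-intersection hypothesis: given $x \in M$, pick an orbit representative $z \in N$ in the orbit of $x$ and an arrow $g$ with $s(g) = z$, $t(g) = x$; then $g \in s^{-1}(N)$ and $t(g) = x$. So the content is the submersion statement, which is a condition on the differential of $t$ at each point $g \in s^{-1}(N)$.

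The key step is the infinitesimal computation. Fix $g \in s^{-1}(N)$ with $s(g) = z \in N$ and $t(g) = x$. I want to show $d_g t : T_g(s^{-1}(N)) \to T_x M$ is surjective. The standard right-translation trick identifies $\ker d_g s$ with $A_x$ (the fibre of the Lie algebroid at the target), under which $d_g t$ restricted to $\ker d_g s$ becomes the anchor $\rho_x : A_x \to T_x M$; more precisely, using right translation by $g$ one has $T_g \cal{G} \cong T_z(\text{s-fibre through }g)' \oplus \ldots$, but the clean statement is: the image of $d_g t$ applied to the vertical-for-$s$ directions tangent to $s^{-1}(N)$ already contains $\rho_x(A_x)$, because $s^{-1}(z)$ is the source-fibre, it sits inside $s^{-1}(N)$, it is diffeomorphic to the source-fibre of $\cal{G}$ through any point of the $\cal{G}$-orbit, and $t$ restricted to it is the orbit map whose differential at $g$ has image exactly $\rho_x(A_x) \subseteq T_x M$ (the tangent space to the orbit through $x$). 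On the other hand, I also get the directions coming from moving $z$ inside $N$: choosing a local bisection or simply using that $t$ is a submersion on all of $\cal{G}$ and that $s^{-1}(N) = s^{-1}(N)$ fibres over $N$ via $s$ with fibres the source-fibres, the image of $d_g t$ contains $d_g t$ of a lift of $T_z N$. Combining, $\operatorname{img} d_g t \supseteq \rho_x(A_x) + (\text{image of a complement mapping onto } T_z N)$. Finally I invoke transversality: one must check that $T_z M = T_z N + \rho_z(A_z)$ at the point $z$ propagates to give surjectivity at $x$; this follows because the orbit through $z$ equals the orbit through $x$, the map $t$ sends the orbit through $z$ onto the orbit through $x$ submersively onto leaves, and transversality at $z$ says $N$ is transverse to the orbit foliation at $z$, a condition invariant along the orbit. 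Concretely, the image of $d_g t$ is $\rho_x(A_x) + (d_g t)(W)$ where $W \subseteq T_g(s^{-1}(N))$ is any subspace mapping isomorphically onto $T_z N$ under $d_g s$; translating $T_z N + \rho_z(A_z) = T_z M$ across the arrow $g$ yields $T_x M$.

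I expect the main obstacle to be making the last translation argument precise---carefully tracking how transversality at the point $z \in N$ on the source side transfers to surjectivity of $d_g t$ at the target point $x$, since $t$ and $s$ land in the same base but the Lie algebroid fibres $A_z$ and $A_x$ are a priori unrelated. The cleanest route is probably to work locally over the orbit: choose a local submersion chart for the orbit foliation near $x$, lift it through $t$ (a submersion) to a neighbourhood of $g$ in $\cal{G}$, and check that the resulting model exhibits $s^{-1}(N)$ near $g$ as a submanifold whose image under $t$ is open; alternatively, one can phrase everything in terms of the ``transport'' isomorphism between the normal spaces $T_x M / \rho_x(A_x)$ and $T_z M/\rho_z(A_z)$ induced by the arrow $g$ (these are canonically isomorphic for any arrow in a given orbit), under which transversality at $z$---i.e.\ $T_z N$ surjects onto $T_z M/\rho_z(A_z)$---becomes exactly the statement that $d_g t(W)$ surjects onto $T_x M/\rho_x(A_x)$, giving $\operatorname{img} d_g t = T_x M$. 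Once the submersion claim is established, Lemma~\ref{MRL} applies verbatim to give that $\cal{G}|_N$ is a Lie subgroupoid Morita equivalent to $\cal{G}$, completing the proof. The remaining verification that $\cal{G}|_N = t^{-1}(N)\cap s^{-1}(N)$ is genuinely a submanifold and a subgroupoid is also handled by Lemma~\ref{MRL}, so nothing further is needed.
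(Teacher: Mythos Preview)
Your reduction to Lemma~\ref{MRL} and your treatment of surjectivity are exactly what the paper does. The only difference is in how you verify that $t|_{s^{-1}(N)}$ is a submersion. You work directly at a general arrow $g$ and invoke the normal representation to transport transversality from $z=s(g)$ to $x=t(g)$; this is correct, and your outline of the argument via the canonical isomorphism $T_zM/\rho_z(A_z)\cong T_xM/\rho_x(A_x)$ is the right way to make it precise. The paper instead checks the submersion condition only along the identity bisection, where the computation is immediate: at $\epsilon(n)$ one has $T_{\epsilon(n)}s^{-1}(N)\cong A_n\oplus T_nN$, on which $dt$ acts as the anchor on the first summand and the identity on the second, so surjectivity is exactly the transversality hypothesis. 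It then propagates to an arbitrary $g\in s^{-1}(N)$ by left translation by a local bisection through $g$, which preserves $s^{-1}(N)$ and intertwines $t$ with a local diffeomorphism of $M$. This sidesteps precisely the obstacle you flagged (relating $A_z$ and $A_x$) and is somewhat cleaner to write down; your approach has the virtue of being more self-contained, since it does not require introducing bisections.
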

\begin{proof}It suffices to check the condition of Lemma \ref{MRL}, namely that $t|_{s^{-1}(N)} : s^{-1}(N) \to M$ is a surjective submersion. That this map is surjective follows from the fact that $N$ intersects every orbit. For $n \in N$, we have the decomposition $T_{\epsilon(n)}s^{-1}(N) \cong A_{n} \oplus T_{n}N$, where $\epsilon(n)$ is the identity arrow at $n$. The map $dt_{\epsilon(n)}$ acts as the anchor on the first summand and the identity on the second. It is therefore surjective because $A$ is transverse to $N$. Hence $t|_{s^{-1}(N)}$ is a submersion along the identity bisection. Using left translation by local bisections, we can then show that $t|_{s^{-1}(N)}$ is a submersion everywhere.
\end{proof}

\section{Logarithmic connections: Global theory} \label{Globalsection}
Let $X$ be a complex manifold with a smooth closed hypersurface $D \subset X$ which is not necessarily connected, and let $G$ be a connected complex reductive group. In this section, we will study the $G$-representations of the twisted fundamental groupoid $\Pi(X,D)$. By Lie's second theorem, these representations are equivalent to principal $G$-bundles over $X$ equipped with flat connections which have a logarithmic singularity along $D$. The goal of this section is to establish a general functorial Riemann-Hilbert correspondence for logarithmic flat connections. 

\subsection{Logarithmic connections on line bundles}  \label{logconglinbun}
As a warm-up, we will start by studying the special case where $D$ is the zero section in the total space of a line bundle $p : L \to D$. This will serve as a local model for the general case which we will consider later. 

We start by describing in detail the structure of the twisted fundamental groupoid $\Pi(L, D)$. This groupoid has two orbits: the zero section $D$ and its complement $L^{\times}$. The restriction of the groupoid to the open orbit $L^{\times}$ is simply a fundamental groupoid 
\[
\Pi(L, D)|_{L^{\times}} = \Pi(L^{\times}).
\]
To determine the restriction of the groupoid to the orbit $D$, consider first the corresponding restriction of the Lie algebroid $T_{L}(- \log D)|_{D}$. This was observed in \cite{Gualtieri-Li-2012, tortella2016representations} to be isomorphic to the Atiyah algebroid $At(L^{\times})$, where $L^{\times}$ is viewed as a principal $\bb{C}^{*}$-bundle. To see this isomorphism, recall that the sections of $At(L^{\times})$ are defined to be the $\bb{C}^{*}$-invariant vector fields on $L$. Locally, these are spanned by $\bb{C}^{*}$-invariant lifts of vector fields on $D$, and the Euler vector field $z \partial_{z}$, where $z$ is a local linear coordinate on the fibers of $L$. These vector fields also define sections of $T_{L}(- \log D)$, and they are precisely the ones that survive when we restrict to the zero section. 

As a result of the isomorphism $T_{L}(-\log D)|_{D} \cong At(L^{\times})$, we have an isomorphism between the restriction $\Pi(L, D)|_{D}$ and the source simply connected integration of $At(L^{\times})$. This is given by the quotient $\Pi(L^{\times})/\bb{C}^{*}$, which is a groupoid over $D$. The composition of arrows is defined to be concatenation of paths, using the $\bb{C}^{*}$ action to match up the endpoints as required. 

There is a natural morphism of Lie algebroids $p_{!}: T_{L}(-\log D) \to At(L^{\times})$ which covers the projection map $p: L \to D$. Over $L^{\times}$ this map is given by the quotient projection $TL \to TL/\bb{C}^{*}$, and over $D$ it is given by the isomorphism of Lie algebroids described above. By Lie's second theorem, this map integrates to a morphism of Lie groupoids
\[
p : \Pi(L, D) \to \Pi(L^{\times})/\bb{C}^{*}.
\]
Combining $p$ with the source map $s$, we arrive at the following map 
\[
\phi = (p, s) : \Pi(L, D) \to \Pi(L^{\times})/\bb{C}^* \times_{D} L,
\]
which is an isomorphism. This defines a linear left action of $\Pi(L^{\times})/\bb{C}^{*}$ on $L$, which is given by the following formula 
\[
g.v = t( \phi^{-1}(g,v)),
\]
where $g \in \Pi(L^{\times})/\bb{C}^{*}$ and $v \in L$, such that $s(g) = p(v)$. As a result, $\phi$ defines an isomorphism between the twisted fundamental groupoid and the resulting action groupoid:
\[
\Pi(L, D) \cong \Pi(L^{\times})/\bb{C}^{*} \ltimes L.
\]

Given any point $y \in D$, the fibre $L_{y}$ is transverse to the Lie algebroid $T_{L}(- \log D)$ and intersects both orbits of $\Pi(L,D)$. Therefore, by Proposition \ref{criterion}, $\Pi(L,D)$ is Morita equivalent to the restriction 
\[
\Pi(L, D)|_{L_{y}} \cong A(L_{y}) \ltimes L_{y},
\]
where $A(L_{y}) = \Pi(L^{\times})/\bb{C}^{*} |_{y}$ is the \emph{isotropy group} of $\Pi(L^{\times})/\bb{C}^{*}$ at $y$, consisting of homotopy classes of paths in $L^{\times}$ which start and end on the given fibre $L_{y}^{\times}$. By Proposition \ref{repeq}, we therefore get an equivalence of representation categories 
\[
\Rep(\Pi(L,D), G) \cong \Rep(A(L_{y}) \ltimes L_{y}, G).
\]

We now take a closer look at this category of representations. In \cite[Lemma 4.1.3]{tortella2016representations} the isotropy group $A(L_{y})$ was shown to sit in the following short exact sequence 
\begin{equation} \label{Atisotropyseq}
0 \to \bb{Z} \to \pi(L^{\times}, v) \times \bb{C} \to A(L_{y}) \to 0, 
\end{equation}
where $\pi(L^{\times}, v)$ is the fundamental group of $L^{\times}$ based at a point $v \in L_{y}^{\times}$. Let $l$ denote the central element of $\pi(L^{\times}, v)$ which is represented by a counterclockwise loop contained in the fibre above $y$. The leftmost map in the sequence is defined by sending $n \in \bb{Z}$ to $(l^{-n}, 2 \pi i n) \in \pi(L^{\times}, v) \times \bb{C}$. This sequence can be upgraded to a short exact sequence of action groupoids 
\[
0 \to \bb{Z} \times L_{y} \to (\pi(L^{\times}, v) \times \bb{C}) \ltimes L_{y} \to A(L_{y}) \ltimes L_{y} \to 0.
\]
By pulling back representations along the maps in the sequence, we obtain functors between the corresponding representation categories which go in the opposite direction. The category $\Rep(A(L_{y}) \ltimes L_{y}, G)$ is thus embedded into $\Rep((\pi(L^{\times}, v) \times \bb{C}) \ltimes L_{y} , G)$ as the subcategory of representations which pullback to a trivial $\bb{Z} \times L_{y}$ representation. 

The representations of $(\pi(L^{\times}, v) \times \bb{C}) \ltimes L_{y}$ are easy to describe. They are given by a pair of representations of $\bb{C} \ltimes L_{y}$ and $\pi(L^{\times}, v)$ on the same principal bundle $P$ over $L_{y}$. Furthermore, because $\pi(L^{\times}, v)$ acts trivially on $L_{y}$ and commutes with $\bb{C}$, it acts on $P$ by automorphisms of the $\bb{C}\ltimes L_{y}$-representation. In terms of this description, the representations of $A(L_{y}) \ltimes L_{y}$ are simply those for which the actions of $2 \pi i \in \bb{C}$ and $l \in \pi(L^{\times}, v)$ agree. We summarise these observations in the following lemma. 

\begin{lemma} \label{furthersimplification}
The category of representations $\Rep(A(L_{y}) \ltimes L_{y}, G)$ is isomorphic to a category consisting of tuples $(P, \Phi, \gamma)$, where $(P, \Phi)$ is a representation of $\bb{C} \ltimes L_{y}$, and $\gamma : \pi(L^{\times}, v) \to \Aut(P, \Phi)$ is a homomorphism such that $\gamma(l) = \Phi(2 \pi i, z)$. 
\end{lemma}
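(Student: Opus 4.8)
The plan is to unwind the two reductions already in place and then match up the data on both sides. Recall that we have the isomorphism of groupoids $\Pi(L,D)|_{L_y} \cong A(L_y) \ltimes L_y$, and that the short exact sequence of action groupoids
\[
0 \to \bb{Z} \times L_{y} \to (\pi(L^{\times}, v) \times \bb{C}) \ltimes L_{y} \to A(L_{y}) \ltimes L_{y} \to 0
\]
exhibits $\Rep(A(L_y)\ltimes L_y, G)$ as the full subcategory of $\Rep((\pi(L^\times,v)\times\bb{C})\ltimes L_y, G)$ consisting of those representations whose pullback along $\bb{Z}\times L_y \to (\pi(L^\times,v)\times\bb{C})\ltimes L_y$ is trivial. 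So the first step is simply to record that it suffices to identify $\Rep((\pi(L^\times,v)\times\bb{C})\ltimes L_y, G)$ with the category of tuples $(P,\Phi,\gamma)$ where $(P,\Phi)\in\Rep(\bb{C}\ltimes L_y, G)$ and $\gamma:\pi(L^\times,v)\to\Aut_G(P)$ is any homomorphism landing in automorphisms of $\Phi$, and then to check that the triviality-on-$\bb{Z}$ condition translates exactly into $\gamma(l) = \Phi(2\pi i, z)$.

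For the identification of $\Rep((\pi(L^\times,v)\times\bb{C})\ltimes L_y, G)$, I would argue as follows. Since $\pi(L^\times,v)$ is a discrete group acting trivially on the base $L_y$ and the action of the direct factor $\bb{C}$ on $L_y$ is the multiplicative one, the action groupoid $(\pi(L^\times,v)\times\bb{C})\ltimes L_y$ is, as a groupoid, the fibre product over $L_y$ of the transformation groupoid $\bb{C}\ltimes L_y$ with the (trivial-base) bundle of groups $\pi(L^\times,v)\times L_y$. A representation on a principal $G$-bundle $P\to L_y$ is a homomorphism into the gauge groupoid $\cal{G}(P)$; restricting to the two subgroupoids $\bb{C}\ltimes L_y$ and $\pi(L^\times,v)\times L_y$ gives on one hand a representation $\Phi$ of $\bb{C}\ltimes L_y$, and on the other hand, since arrows of $\pi(L^\times,v)\times L_y$ start and end at the same point, a bundle homomorphism over $L_y$, i.e. for each $\alpha\in\pi(L^\times,v)$ an automorphism $\gamma(\alpha)\in\Aut_G(P)$, with $\gamma$ a group homomorphism. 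The commutativity relation $(\alpha, 0)\ast(0,\lambda) = (0,\lambda)\ast(\alpha,0)$ in the product groupoid, applied under the representation, says precisely that $\gamma(\alpha)$ commutes with $\Phi(\lambda, z)$ in the appropriate sense, i.e. $\gamma(\alpha)$ is an automorphism of the representation $(P,\Phi)$; conversely any such pair $(\Phi,\gamma)$ assembles into a homomorphism on the product groupoid because every arrow factors uniquely as a product of one arrow from each factor. This gives the desired isomorphism of categories, functorially in morphisms (a morphism of representations is a $G$-bundle isomorphism intertwining both restrictions, hence a pair intertwining $\Phi$ and $\gamma$).

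Finally I would nail down the subcategory condition. The map $\bb{Z}\times L_y \to (\pi(L^\times,v)\times\bb{C})\ltimes L_y$ sends $n$ to $(l^{-n}, 2\pi i\, n)$; pulling back a representation corresponding to $(\Phi,\gamma)$ along it yields the $\bb{Z}\ltimes L_y$-representation whose value on $n$ is $\gamma(l^{-n})\circ\Phi(2\pi i\, n, z) = \big(\gamma(l)^{-1}\Phi(2\pi i, z)\big)^{n}$ (using that $\gamma(l)$ and $\Phi(2\pi i,z)$ commute and that $\Phi(2\pi i\,n,z) = \Phi(2\pi i, z)^n$ by the groupoid relation). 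This pullback is trivial if and only if $\gamma(l)^{-1}\Phi(2\pi i, z) = \mathrm{id}$, that is $\gamma(l) = \Phi(2\pi i, z)$, which is exactly the stated constraint. Chaining the isomorphisms $\Rep(A(L_y)\ltimes L_y, G) \cong \{(P,\Phi,\gamma)\}$ completes the proof. The only point requiring any care is the factorization-and-commutativity bookkeeping in the middle step — making sure that "$\gamma(\alpha)$ commutes with $\Phi$" is correctly interpreted as "$\gamma(\alpha)\in\Aut(P,\Phi)$" rather than a naive pointwise commutation — but this is routine once one writes out the gauge-groupoid relations, so there is no substantive obstacle.
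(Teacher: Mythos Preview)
Your proposal is correct and follows essentially the same approach as the paper: the paper's proof is the paragraph immediately preceding the lemma, which likewise passes through the short exact sequence of action groupoids, describes representations of $(\pi(L^{\times},v)\times\bb{C})\ltimes L_y$ as pairs $(\Phi,\gamma)$ with $\gamma$ landing in $\Aut(P,\Phi)$, and then reads off the triviality-on-$\bb{Z}$ condition as $\gamma(l)=\Phi(2\pi i,z)$. Your write-up simply makes each of those steps explicit.
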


We can now apply the functors from Theorem \ref{RH}  to give a description of the category of $\Pi(L,D)$ representations in terms of generalized monodromy data. To this end, we define a category $F((L,D), G)$, which is similar to the one defined in Section \ref{Definingthecat}. The objects of this category consist of tuples $(\gamma, K_{1}, \nu_{0}, K_{0}, A)$, where $K_{1}$ and $K_{0}$ are right $G$-torsors, $A \in \frak{aut}_{G}(K_{0})$ is an infinitesimal automorphism admitting the JC decomposition $A = S + N$, $\nu_{0} \subset \spc{Hom}_{G}(K_{0},K_{1})$ is a right $U_{\bb{N}}(S)$-torsor, and finally, $\gamma : \pi(L^{\times}, v) \to \text{St}(\nu) \subseteq Aut_{G}(K_{1})$ is a homomorphism. The compatibility condition for this tuple is that $\chi(\gamma(l)) = \exp(2 \pi i A)$, where $l$ is the central element represented by a counterclockwise loop in the fibre $L_{y}^{\times}$, and $\chi$ is the map defined by the sequence \ref{canonicalprojection}. The morphisms of this category are defined to be morphisms between the underlying $G$-torsors which preserve all of the structure. 

\begin{theorem} \label{RHforLB}
Let $L \to D$ be a holomorphic line bundle. Choose points $y \in Y$ and $v \in L_{y} \setminus \{ 0 \}$. Then there is an equivalence of categories
\[
\Rep(\Pi(L,D), G) \cong F((L,D), G). 
\]
\end{theorem}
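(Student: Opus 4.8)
The plan is to assemble the equivalence by composing three equivalences already established in the excerpt, and then to verify that the composite lands in the category $F((L,D),G)$ as described. First I would invoke the Morita equivalence $\Pi(L,D) \sim A(L_{y}) \ltimes L_{y}$ obtained by applying Proposition \ref{criterion} to the fibre $L_{y}$ (which is transverse to $T_{L}(-\log D)$ and meets both orbits), and then Proposition \ref{repeq} to get
\[
\Rep(\Pi(L,D), G) \cong \Rep(A(L_{y}) \ltimes L_{y}, G).
\]
Next I would apply Lemma \ref{furthersimplification} to identify the right-hand side with the category of tuples $(P, \Phi, \gamma)$ where $(P,\Phi)$ is a $\bb{C} \ltimes L_{y}$-representation and $\gamma : \pi(L^{\times}, v) \to \Aut(P,\Phi)$ satisfies $\gamma(l) = \Phi(2\pi i, z)$. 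Since $L_{y} \cong \bb{A}$ after a choice of linear coordinate sending $v$ to $1$, the pair $(P,\Phi)$ is an object of $\Rep(\bb{C} \ltimes \bb{A}, G)$, and Theorem \ref{RH} gives the equivalence $(\mathcal{L},\mathcal{R})$ with $F((\bb{C},0),G)$.

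The substance of the proof is then bookkeeping: translating the extra datum $\gamma$ across the equivalence of Theorem \ref{RH}. Under $\mathcal{L}$, a representation $(P,\Phi)$ maps to $(M(\Phi), P_{1}, \nu_{0}(\Phi_{s}), P_{0}, \Res\Phi)$, and an automorphism of $(P,\Phi)$ induces, by restriction to the fibre over $1$, an element of $\Aut_{G}(P_{1})$; I need to check that this element lies in $\text{St}(\nu)$ — i.e. that automorphisms of $\Phi$ preserve the torsor of strict linearizations of $\Phi_{s}$ (they preserve $\Phi_{s}$ since the JC decomposition of Theorem \ref{JCdecomp} is functorial, hence preserve $L(\Phi_{s})$ and the set $\nu_{0}(\Phi_{s})$ of isomorphisms $L(\Phi_{s}) \to \Phi_{s}$). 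This realizes $\gamma$ as a homomorphism $\pi(L^{\times},v) \to \text{St}(\nu) \subseteq \Aut_{G}(K_{1})$ with $K_{1} = P_{1}$. The condition $\gamma(l) = \Phi(2\pi i, \cdot)$ translates, after applying $\chi$ from the sequence \ref{canonicalprojection} and using $M(0) = \exp(2\pi i \Res\Phi)$ together with the fact that $\chi$ records the value at $0 \in \bb{A}$ (cf.\ Proposition \ref{autofrep}, where $\Aut(\iota^{*}(\Phi_{s}))$ is the Levi and $\chi$ is the projection), into exactly $\chi(\gamma(l)) = \exp(2\pi i A)$. Conversely, starting from $(\gamma, K_{1}, \nu_{0}, K_{0}, A)$, the functor $\mathcal{R}$ produces $(P,\Phi) = \mathcal{R}(M,K_{1},\nu_{0},K_{0},A)$ for any one element $M = \gamma(l)$, and the homomorphism $\gamma$ valued in $\text{St}(\nu)$ acts on $P = \nu_{0} \otimes K_{0}$ through the identification of $\text{St}(\nu)$ with automorphisms of $\Phi_{s}$ of the relevant parabolic type, producing the required $\gamma : \pi(L^{\times},v) \to \Aut(P,\Phi)$; one checks $\gamma(l)$ acts as $\Phi(2\pi i, \cdot)$ by the construction of $U$ in $\mathcal{R}$.

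Finally I would note that all three equivalences are genuine equivalences of categories (the Morita step by Proposition \ref{repeq}, the second step by Lemma \ref{furthersimplification}, the third by Theorem \ref{RH}), and that the matching of morphisms is automatic: a morphism in $F((L,D),G)$ is a pair of $G$-torsor maps preserving all structure including $\gamma$, which corresponds under $\mathcal{R}$ precisely to a $\bb{C}\ltimes\bb{A}$-equivariant map intertwining the $\pi(L^{\times},v)$-actions, i.e.\ a morphism in the category of Lemma \ref{furthersimplification}. I expect the main obstacle to be purely notational: keeping straight the several groups $U_{\bb{N}}(S)$, $\text{St}(\nu_{0})$, $\text{St}(\nu)$, their identifications from Proposition \ref{autofrep}, and verifying that the homomorphism $\gamma$ really does take values in $\text{St}(\nu)$ rather than merely in $\Aut_{G}(K_{1})$ — this is where the compatibility $\gamma(l) = \Phi(2\pi i,\cdot)$ and functoriality of the JC decomposition must be used carefully. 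No genuinely new ideas beyond the machinery already developed are needed.
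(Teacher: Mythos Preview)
Your proposal is correct and follows essentially the same route as the paper: compose the Morita equivalence from Proposition~\ref{criterion}, Lemma~\ref{furthersimplification}, and Theorem~\ref{RH}, then track the extra datum $\gamma$ through the last equivalence. The paper packages the final bookkeeping slightly differently: rather than restricting an automorphism of $(P,\Phi)$ to the fibre over $1$ and checking directly that it lands in $\text{St}(\nu)$, it first observes that functoriality of $\mathcal{L}$ sends $\gamma$ to a homomorphism $(\gamma_{1},\gamma_{0})$ into $\Aut(M,K_{1},\nu_{0},K_{0},A)$, and then shows that any automorphism $(T_{1},T_{0})$ of an object of $F((\bb{C},0),G)$ is determined by $T_{1}$ alone via $T_{0}=\chi(T_{1})$ (since $T_{1}\nu_{0}T_{0}^{-1}=\nu_{0}$ and $T_{0}\in C_{\Aut_{G}(K_{0})}(A)$). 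This makes the passage from the pair $(\gamma_{1},\gamma_{0})$ to $\gamma_{1}$ immediate, and simultaneously provides the inverse construction you sketch. The content is the same; the paper's formulation just avoids having to separately verify injectivity of the restriction-to-fibre-$1$ map and the well-definedness of the reverse action of $\text{St}(\nu)$ on $\nu_{0}\otimes K_{0}$.
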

\begin{proof}
The above discussion, Lemma \ref{furthersimplification}, and Theorem \ref{RH} combine to give an equivalence between $\Rep(\Pi(L,D), G)$ and a category consisting of tuples $(\gamma, M, K_{1}, \nu_{0}, K_{0}, A)$, where $(M, K_{1}, \nu_{0}, K_{0}, A)$ is an object of $F((\bb{C},0), G)$, and $\gamma = (\gamma_{1}, \gamma_{0}) : \pi(L^{\times}, v) \to \Aut(M, K_{1}, \nu_{0}, K_{0}, A)$ is a homomorphism such that $\gamma_{1}(l) = M$. The tuple $(\gamma_{1}, K_{1}, \nu_{0}, K_{0}, A)$ gives an object of $F((L,D), G)$. Therefore, it suffices to show that we can reconstruct $\gamma$ from $\gamma_{1}$ alone. 

Recall from Section \ref{Definingthecat} that an automorphism of the object $(M, K_{1}, \nu_{0}, K_{0}, A) \in F((\bb{C},0), G)$ consists of a pair of $G$-equivariant automorphisms $T_{i} : K_{i} \to K_{i}$, $i = 0, 1$, which preserve all of the structure. In particular, $T_{0} \in C_{\Aut_{G}(K_{0})}(A)$, and $T_{1} \nu_{0} T_{0}^{-1} = \nu_{0}$. As a result, $T_{1} \in St(\nu) \cap C_{\Aut_{G}(K_{1})}(M)$, and $\chi(T_{1}) = T_{0}$. This implies that the automorphism $(T_{1}, T_{0})$ is completely determined by $T_{1}$ alone. Therefore, the homomorphism $\gamma$ is fully determined by the component $\gamma_{1} : \pi(L^{\times}, v) \to St(\nu)$. 
\end{proof}

\subsection{Logarithmic connections on complex manifolds} 
In this section, we deal with the fully general case of a smooth hypersurface $D$ in a complex manifold $X$. The approach that we adopt for studying the representations of the twisted fundamental groupoid $\Pi(X,D)$ is to use a Morita equivalence to decompose the groupoid into more simple building blocks which can be analyzed using the methods of previous sections. These building blocks consist of two types: the fundamental group of the complement $X \setminus D$, and the twisted fundamental groupoid of the normal bundle $\Pi(N_{X,D}, D)$, which was studied in Section \ref{logconglinbun}.

\subsubsection{Construction of the Morita equivalence} \label{constructingME}
We construct a Morita equivalence relating $\Pi(X, D)$ to a groupoid which is defined over the normal bundle of $D$. The construction is inspired by Deligne's notion of \emph{tangential basepoints} \cite{deligne1989groupe}. The intuition is to consider paths connecting points in $X \setminus D$ to points in $D$, but we only identify paths if they are related by a homotopy that fixes the normal direction of the paths at their endpoints lying in $D$. Hence, we are in effect considering paths that have an endpoint lying in the normal bundle to $D$. In order to make this precise, we will construct a larger space $Z$ and a hypersurface $\tilde{D}$, which contains both $(X, D)$ and $(N_{X,D}, 0_{D})$, where $0_{D}$ is the zero section of the normal bundle $N_{X,D}$. The twisted fundamental groupoid $\Pi(Z, \tilde{D})$ of this larger pair of spaces will then allow us to consider paths going between $N_{X,D}$, and $X$. 

First, for each connected component $D_{i}$ of the hypersurface $D$, choose a tubular neighbourhood $U_{i} \subset X$. These must be chosen so that they are pairwise disjoint. Next, consider the deformation to the normal cone of $(U_{i}, D_{i})$ (see \cite{fulton2013intersection} for a general account of this construction). This is a family $p : \cal{D}_{i} \to \bb{C}$ which interpolates between $p^{-1}(t) = U_{i}$, for $t \neq 0$, and the normal bundle $p^{-1}(0) = N_{X, D_{i}}$. The space $\cal{D}_{i}$ can be constructed as follows 
\[
\cal{D}_{i} = \Bl(U_{i} \times \bb{C}, D_{i} \times \{ 0 \}) \setminus \overline{(U_{i} \setminus D_{i}) \times \{ 0 \} },
\]
where $\Bl(U_{i} \times \bb{C}, D_{i} \times \{ 0 \}) $ is the blow-up of $U_{i} \times \bb{C}$ at the codimension $2$ submanifold $D_{i} \times \{ 0 \}$, and $ \overline{(U_{i} \setminus D_{i}) \times \{ 0 \} }$ is the proper transform of the fibre $U_{i} \times \{0 \}$. The map $p$ is then given by the blow-down map followed by the projection to $\bb{C}$. 

The deformation space $\cal{D}_{i}$ contains a smooth hypersurface $\widetilde{D}_{i}$ which is isomorphic to $D_{i} \times \bb{C}$, and whose normal bundle is isomorphic to $N_{X,D_{i}} \times \bb{C}$. This hypersurface intersects the non-zero fibres at $D_{i}$, and the fibre over $0$ at the zero section. Hence, the pair $(\cal{D}_{i}, \widetilde{D}_{i})$ provides an interpolation between $(U_{i}, D_{i})$ and $(N_{X,D}, 0_{D})$. Furthermore, because of the existence of tubular neighbourhood embeddings in the smooth category, $(\cal{D}_{i}, \widetilde{D}_{i})$ is topologically equivalent to $(N_{X,D} \times \bb{C}, 0_{D} \times \bb{C})$. 

The disjoint union $\sqcup_{i} (\cal{D}_{i} \setminus p^{-1}(0))$ embeds holomorphically into $X \times \bb{C}^*$. We define 
\[
Z = (\sqcup_{i} \cal{D}_{i}) \cup (X \times D(1,r)),
\]
where $D(1, r)$ is a disc of radius $0 < r \ll 1$ centred at $1 \in \bb{C}$. There is a natural map $p : Z \to \bb{C}$ such that $p^{-1}(0) = \sqcup_{i} N_{X, D_{i}}$, and $p^{-1}(1) = X$. We also define the smooth hypersurface $\widetilde{D} = \sqcup_{i} \widetilde{D}_{i}$. Then $\widetilde{D} \cong D \times \bb{C}$, and it intersects $p^{-1}(1)$ along $D$, and $p^{-1}(0)$ along the disjoint union of the zero sections. 

Next consider the twisted fundamental groupoid $\Pi(Z, \widetilde{D})$, which is the source simply connected integration of $T_{Z}(- \log \widetilde{D})$. The orbits of this groupoid consist of $Z \setminus  \widetilde{D}$ and the hypersurfaces $\widetilde{D}_{i}$. The fibre $p^{-1}(1) = X$ intersects each orbit and is transverse to $T_{Z}(- \log \widetilde{D})$. Therefore, by Proposition \ref{criterion}, $\Pi(Z, \widetilde{D})$ is Morita equivalent to $\Pi(Z, \widetilde{D})|_{X}$. 
\begin{lemma}
The groupoid $\Pi(Z, \widetilde{D})|_{X}$ is canonically isomorphic to $\Pi(X, D)$. 
\end{lemma}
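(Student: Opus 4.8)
The plan is to show that the inclusion $X = p^{-1}(1) \hookrightarrow Z$ induces the desired isomorphism $\Pi(Z,\widetilde D)|_X \cong \Pi(X,D)$ by exhibiting a suitable isomorphism of log tangent algebroids and then invoking Lie's second theorem. First I would observe that $p^{-1}(1) = X$ is an open subset of $Z$ (since $D(1,r)\times X$ is open in $Z$ and $X = \{1\}\times X$ sits inside $X\times D(1,r)$ — more precisely, a neighbourhood of $X$ in $Z$ is contained in $X\times D(1,r)$), so that near $X$ the pair $(Z,\widetilde D)$ looks like $(X\times D(1,r), D\times D(1,r))$. The restriction $T_Z(-\log\widetilde D)|_X$ is then canonically $T_{X\times D(1,r)}(-\log (D\times D(1,r)))|_{X\times\{1\}}$, and since $1\in D(1,r)$ is away from the parameter-singularity, the factor $D(1,r)$ contributes only an ordinary tangent direction; restricting to $X\times\{1\}$ kills that direction and leaves exactly $T_X(-\log D)$. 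This gives a canonical isomorphism of Lie algebroids $T_Z(-\log\widetilde D)|_X \cong T_X(-\log D)$ over $X$.

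Next I would promote this to groupoids. By Proposition \ref{criterion}, since $X$ meets every orbit of $\Pi(Z,\widetilde D)$ and is transverse to $T_Z(-\log\widetilde D)$, the restriction $\Pi(Z,\widetilde D)|_X$ is a Lie groupoid with Lie algebroid $T_Z(-\log\widetilde D)|_X$. The key point is that $\Pi(Z,\widetilde D)|_X$ has connected and simply connected source fibres: this is where the construction of $Z$ matters, and I would argue it as follows. The source fibre of $\Pi(Z,\widetilde D)|_X$ over a point $x\in X$ consists of homotopy classes of paths in $Z$ (suitably restricted relative to $\widetilde D$) from $x$ to points of $X$; because $X\times D(1,r)$ is an open tubular-type piece and each $\cal D_i$ is, topologically, $N_{X,D_i}\times\bb C$ which deformation retracts onto the fibre $p^{-1}(1)$-part, the inclusion $X\hookrightarrow Z$ is a homotopy equivalence, and likewise the inclusion of $s$-fibres of $\Pi(X,D)$ into $s$-fibres of $\Pi(Z,\widetilde D)|_X$ is a homotopy equivalence preserving the simply-connectedness. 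Hence $\Pi(Z,\widetilde D)|_X$ is the source simply connected integration of $T_X(-\log D)$, which by definition and uniqueness is precisely $\Pi(X,D)$.

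Finally I would make the isomorphism canonical: Lie's second theorem (as used throughout the excerpt) says the source simply connected integration is unique up to unique isomorphism, and the isomorphism is induced by the identity on Lie algebroids, which in turn comes from the canonical inclusion $X\hookrightarrow Z$ and the canonical identification of log tangent algebroids above. So no choices are involved.

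I expect the main obstacle to be the homotopy-theoretic verification that the source fibres of $\Pi(Z,\widetilde D)|_X$ are simply connected — equivalently, that adding the deformation-to-the-normal-cone pieces $\cal D_i$ and the collar $X\times D(1,r)$ does not introduce new loops or fail to be simply connected relative to $\widetilde D$. This requires understanding the homotopy type of $(Z,\widetilde D)$ relative to $(X,D)$, using the stated fact that $(\cal D_i,\widetilde D_i)$ is topologically $(N_{X,D}\times\bb C, 0_D\times\bb C)$ together with the tubular neighbourhood identification $U_i\cong N_{X,D_i}$; one must check the gluing of these pieces along $X\times\bb C^*$ is compatible so that the resulting inclusion of $X$ into $Z$ (and of the corresponding monodromy-groupoids) is an equivalence. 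The rest — the algebroid identification and the appeal to Lie II — is routine.
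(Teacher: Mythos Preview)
Your strategy coincides with the paper's: identify the restricted log tangent algebroid with $T_X(-\log D)$, invoke Lie's second theorem, and reduce the remaining work to a homotopy-theoretic check using the topological triviality of $(\cal{D}_i,\widetilde D_i)\cong(N_{X,D_i}\times\bb C,0_{D_i}\times\bb C)$.

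The paper packages the last step a bit differently, and more cleanly, than your proposal. Rather than trying to verify directly that the source fibres of $\Pi(Z,\widetilde D)|_X$ are simply connected, it uses Lie~II to produce a morphism $\psi:\Pi(X,D)\to\Pi(Z,\widetilde D)|_X$ and then checks $\psi$ is an isomorphism orbit by orbit: on each $D_i$ this follows because the normal bundle of $\widetilde D$ is $N_{X,D}\times\bb C$, and on the open orbit it amounts to showing $\pi_1(X\setminus D)\cong\pi_1(Z\setminus\widetilde D)$, which is van Kampen plus the topological triviality of the deformation pieces. Your route ultimately reduces to the same two checks, but your phrasing has two soft spots worth fixing. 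First, for $x\in D$ the source fibre of $\Pi(Z,\widetilde D)|_X$ is \emph{not} describable as ``homotopy classes of paths in $Z$ from $x$ to points of $X$''; the log groupoid has nontrivial isotropy along $\widetilde D$ and the source fibre there is governed by the Atiyah-type description from Section~\ref{logconglinbun}, which is why the paper invokes the normal bundle comparison separately. Second, speaking of an ``inclusion of $s$-fibres of $\Pi(X,D)$ into $s$-fibres of $\Pi(Z,\widetilde D)|_X$'' presupposes a groupoid map, which is exactly what Lie~II supplies; once you have $\psi$, checking it is an isomorphism on each orbit is both more direct and avoids the awkward task of computing $\pi_0,\pi_1$ of $t^{-1}(X)\cap s^{-1}(x)$ inside an abstractly defined source fibre.
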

\begin{proof}
The groupoid $\Pi(Z, \widetilde{D})|_{X}$ is an integration of the Lie algebroid $T_{Z}(- \log \widetilde{D})|_{X} = T_{X}(-\log D)$. Therefore, the identity Lie algebroid morphism integrates by Lie's second theorem to a morphism of Lie groupoids $\psi : \Pi(X,D) \to \Pi(Z, \widetilde{D})|_{X}$. This map is an isomorphism when restricted to each component $D_{i}$, since the normal bundle of $\widetilde{D}$ is isomorphic to $N_{X,D} \times \bb{C}$. Hence, to show that $\psi$ is an isomorphism, it suffices to check that it induces an isomorphism between the fundamental groups of $X \setminus D$ and $Z \setminus \widetilde{D}$. This fact follows from a combination of the van Kampen theorem, and the fact that each pair $(\cal{D}_{i}, \widetilde{D}_{i})$ is homeomorphic to $(N_{X,D} \times \bb{C}, 0_{D} \times \bb{C})$. 
\end{proof}

Choose a basepoint $x_{0} \in X \setminus D$, and for each component $D_{i}$ choose a point $d_{i} \in D_{i}$. Let 
\[
V =\{(x_{0}, 1) \} \cup (\sqcup_{i} N_{X,D_{i}}|_{d_{i}}) \subset Z,
\]
and let $\cal{N} = \Pi(Z, \widetilde{D})|_{V}$. The submanifold $V$ satisfies the conditions of Proposition \ref{criterion}. Therefore $\cal{N}$ is a Lie subgroupoid which is Morita equivalent to $\Pi(Z, \widetilde{D})$. Composing the two Morita equivalences constructed in this section yields a Morita equivalence between $\Pi(X,D)$ and $\cal{N}$. This Morita equivalence is given by
\[
Q = t^{-1}(V) \cap s^{-1}(X),
\]
where $t$ and $s$ are the target and source maps of $\Pi(Z, \widetilde{D})$, respectively. The elements of $\cal{N}$ and $Q$ may be interpreted as paths in $X$ whose endpoints may lie in the normal bundle to $D$.

\subsubsection{Decomposing the groupoid of paths with tangential basepoints}
In this section, we give a van Kampen style decomposition of the groupoid $\cal{N}$ into simple building blocks. To this end, choose non-zero vectors $v_{i} \in N_{X,D_{i}}|_{d_{i}}$ for each component $D_{i}$, and let $\Pi(X \setminus D)_{\bar{v}}$ denote the restriction of $\cal{N}$ to the finite set of basepoints $\bar{v}$ consisting of all $v_{i}$ and $(x_{0}, 1)$. The groupoid $\Pi(X \setminus D)_{\bar{v}}$ can be interpreted as the fundamental groupoid of $X \setminus D$ relative to the basepoint $x_{0}$ and the tangential basepoints $v_{i}$. Recall from Section \ref{logconglinbun} the definition of the group $A(N_{X,D_{i}}|_{d_{i}}) = \Pi(N^{\times}_{X,D_{i}})/\mathbb{C}^*|_{d_{i}}$, which acts linearly on the fibre $N_{X,D_{i}}|_{d_{i}}$ of the normal bundle. 

\begin{proposition} \label{pushoutdecomp}
Let $X$ be a complex manifold with a smooth closed hypersurface $D \subset X$. Choose a point $x_{0} \in X \setminus D$, and for each connected component $D_{i}$ of $D$, choose a point $d_{i} \in D_{i}$, and a non-zero normal vector $v_{i} \in N_{X,D_{i}}|_{d_{i}}$.  Then $\Pi(X,D)$ is Morita equivalent to a Lie groupoid $\cal{N}$ whose base manifold is $V =\{x_{0} \} \sqcup (\sqcup_{i} N_{X,D_{i}}|_{d_{i}})$. Furthermore, $\cal{N}$ is the colimit in the category of holomorphic Lie groupoids of the following diagram 
\[
\xymatrix{ {\pi(N_{X,D_{i}}^{\times}, v_{i})} \ar[r]\ar[d] & { \Pi(X \setminus D)_{\bar{v}}} \\
{A(N_{X,D_{i}}|_{d_{i}}) \ltimes N_{X,D_{i}}|_{d_{i}} } }
\]
\end{proposition}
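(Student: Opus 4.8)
The plan is to establish the Morita equivalence and the colimit description separately. The Morita equivalence between $\Pi(X,D)$ and $\cal{N}$ is exactly the content of the construction in Section \ref{constructingME}: one checks that $V$ meets every orbit of $\Pi(Z,\widetilde D)$ (the open orbit $Z \setminus \widetilde D$ is met by $(x_0,1)$, and each hypersurface orbit $\widetilde D_i$ is met by the zero section point $d_i \in N_{X,D_i}|_{d_i} \cap \widetilde D$) and that $V$ is transverse to $T_Z(-\log\widetilde D)$, so that Proposition \ref{criterion} applies; composing with the already-established Morita equivalence $\Pi(X,D) \cong \Pi(Z,\widetilde D)|_X$ gives the claim. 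So the real work is identifying $\cal{N}$ as the colimit of the displayed diagram.

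For the colimit statement, I would first identify the three groupoids in the diagram as restrictions of $\cal{N}$ to subsets of its base $V$. Writing $V = \{x_0\} \sqcup \bigsqcup_i N_{X,D_i}|_{d_i}$, the restriction of $\cal{N}$ to the fibre $N_{X,D_i}|_{d_i}$ is $\Pi(Z,\widetilde D)|_{N_{X,D_i}|_{d_i}}$, which by the analysis of Section \ref{logconglinbun} (applied to the line bundle $N_{X,D_i} \to D_i$, using that $(\cal{D}_i,\widetilde D_i)$ restricts over $0$ to $(N_{X,D_i},0_{D_i})$) is canonically $A(N_{X,D_i}|_{d_i}) \ltimes N_{X,D_i}|_{d_i}$; the restriction of $\cal{N}$ to the finite set $\bar v = \{(x_0,1)\} \cup \{v_i\}$ is by definition $\Pi(X\setminus D)_{\bar v}$; and the top-left corner $\pi(N^\times_{X,D_i},v_i)$ is the isotropy group of $\cal{N}$ at $v_i$, which is simultaneously a subgroupoid of both of the other two (inside $A(N_{X,D_i}|_{d_i})\ltimes N_{X,D_i}|_{d_i}$ it sits as the isotropy at $v_i$ via the sequence \ref{Atisotropyseq}, and inside $\Pi(X\setminus D)_{\bar v}$ it sits as the isotropy at $v_i$ since a small punctured disc transverse to $D_i$ includes into $X\setminus D$). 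The two maps in the diagram are these two inclusions. I would then argue that $\cal{N}$ is generated, as a groupoid, by arrows of these two types: any arrow of $\cal{N}$ is a homotopy class of a path in $Z\setminus\widetilde D$ (or a limiting tangential path) between points of $V$, and by subdividing the path using the disc $D(1,r)$ and the tubular/deformation regions $\cal{D}_i$ one writes it as a concatenation of a path staying in $X\setminus D$ (contributing to $\Pi(X\setminus D)_{\bar v}$, after sliding endpoints to $\bar v$ along rays in the normal fibres) and loops/arcs confined to a single normal fibre (contributing to $A(N_{X,D_i}|_{d_i})\ltimes N_{X,D_i}|_{d_i}$). This is precisely a van Kampen-type decomposition, and I would invoke the groupoid van Kampen theorem \cite{brown1967groupoids, brown1984van} at the level of topological groupoids, then upgrade to the holomorphic Lie-groupoid category by noting that all three building blocks are source-simply-connected holomorphic integrations of the evident Lie algebroids and that the colimit in the algebroid picture (a pushout of log tangent algebroids corresponding to the open cover of $Z$ by $X\times D(1,r)$ and the $\cal{D}_i$, intersected with a transversal) integrates uniquely by Lie's second theorem.

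The main obstacle is making the "upgrade to the holomorphic category" precise: colimits of Lie groupoids do not exist in general, so the assertion that the diagram has a colimit and that it equals $\cal{N}$ must be justified by hand. I expect the cleanest route is to prove the universal property directly — given a holomorphic groupoid morphism out of each of $\Pi(X\setminus D)_{\bar v}$ and $A(N_{X,D_i}|_{d_i})\ltimes N_{X,D_i}|_{d_i}$ agreeing on $\pi(N^\times_{X,D_i},v_i)$, one differentiates to the Lie algebroid level where the pushout genuinely exists (log tangent algebroids glue along an open cover), assembles a morphism out of the algebroid of $\cal{N}$, and reintegrates by Lie's second theorem using source-simple-connectedness of $\cal{N}$; uniqueness follows because $\cal{N}$ is generated by the images of the two inclusions. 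Checking that the algebroid-level pushout really is $T_Z(-\log\widetilde D)$ restricted to $V$, and that source-simple-connectedness survives the restriction of Proposition \ref{criterion}, are the two points that need care.
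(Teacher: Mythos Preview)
Your treatment of the Morita equivalence and your identification of the three groupoids in the diagram as restrictions of $\cal{N}$ to subsets of $V$ match the paper's argument exactly. The paper also argues, as you do, that every arrow of $\cal{N}$ factors as a product of arrows from $\Pi(X\setminus D)_{\bar v}$ and from the action groupoids $A(N_{X,D_i}|_{d_i}) \ltimes N_{X,D_i}|_{d_i}$, with the ambiguity in this factorization controlled precisely by $\pi(N_{X,D_i}^{\times}, v_i)$. At that point the paper simply concludes: this generators-and-relations description \emph{is} the pushout. No separate verification of the universal property in the holomorphic category is carried out; the implicit point is that the only non-discrete building block is the action groupoid, whose holomorphic structure is already prescribed, so once the underlying groupoid is identified as the set-theoretic pushout there is nothing further to check.

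Your proposed resolution of the ``main obstacle'' via differentiating to the algebroid level and reintegrating has a genuine gap. The groupoid $\Pi(X\setminus D)_{\bar v}$ lives over the finite discrete set $\bar v$, so its Lie algebroid is the zero bundle; likewise for $\pi(N_{X,D_i}^{\times}, v_i)$. Differentiating therefore discards exactly the monodromy data you are trying to glue, and no algebroid pushout can recover it. Lie's second theorem applied to the algebroid of $\cal{N}$ sees only the one-dimensional algebroid over each normal fibre $N_{X,D_i}|_{d_i}$ and nothing over $x_0$; a morphism out of that algebroid carries no information about the map on $\Pi(X\setminus D)_{\bar v}$. The route that actually works is the one you sketched first and then abandoned: the factorization-plus-relations statement directly gives the universal property, since compatible morphisms out of the two building blocks assemble uniquely into a groupoid morphism out of $\cal{N}$, and holomorphicity is automatic because away from the discrete pieces the map is already the given holomorphic morphism on the action groupoids.
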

\begin{proof}
The first part of the proof follows from the discussion in Section \ref{constructingME}. It then remains for us to show that $\cal{N}$ is the colimit of the above diagram. For each component $D_{i}$, we have $N_{X,D_{i}} \subset Z$, and restricting the log tangent algebroid, we get the identification $T_{Z}(- \log \widetilde{D})|_{N_{X,D_{i}} } = T_{N_{X,D_{i}}}(- \log \ 0_{D_{i}})$. Hence, by Lie's second theorem we have a morphism $\Pi(N_{X,D_{i}}, D_{i}) \to \Pi(Z, \widetilde{D})$. Using the results of Section \ref{logconglinbun}, we see that this map can be restricted to the following morphism
\[
\phi_{i} : A(N_{X,D_{i}}|_{d_{i}}) \ltimes N_{X,D_{i}}|_{d_{i}}  \to \cal{N}.
\]
If we restrict $\phi_{i}$ to the origin $0_{d_{i}}$ we get an isomorphism. If we restrict instead to the point $v_{i}$, then we get the map $\pi(N_{X,D_{i}}^{\times}, v_{i}) \to \Pi(X \setminus D)_{\bar{v}}$, and this sits in the following commutative square
\[
\xymatrix{ {\pi(N_{X,D_{i}}^{\times}, v_{i})} \ar[r]\ar[d] & { \Pi(X \setminus D)_{\bar{v}}} \ar[d] \\
{A(N_{X,D_{i}}|_{d_{i}}) \ltimes N_{X,D_{i}}|_{d_{i}} } \ar[r] & {\cal{N}}.}
\]
Every arrow in $\cal{N}$ is a product of arrows from $\Pi(X \setminus D)_{\bar{v}}$ and arrows of the form $\phi_{i}(u)$, for $u \in A(N_{X,D_{i}}|_{d_{i}}) \ltimes N_{X,D_{i}}|_{d_{i}}$. Furthermore, the non-uniqueness of the factorization arises precisely from elements $\phi_{i}(\gamma)$, for $\gamma \in \pi(N_{X,D_{i}}^{\times}, v_{i})$. It follows from this that $\cal{N}$ is the desired colimit. 
\end{proof}
\begin{remark}
It is possible to give similar Morita equivalent models for the other integrations of $T_{X}(- \log D)$. For example, we can describe the maximal Hausdorff integration in the following way. Let $K_{i}$ be the kernel of $\pi(N_{X,D_{i}}^{\times}, v_{i}) \to \Pi(X \setminus D)_{\bar{v}}$. Then in the diagram from Proposition \ref{pushoutdecomp}, replace $\pi(N_{X,D_{i}}^{\times}, v_{i})$  by the quotient $\pi(N_{X,D_{i}}^{\times}, v_{i})/K_{i}$, and replace $A(N_{X,D_{i}}|_{d_{i}}) \ltimes N_{X,D_{i}}|_{d_{i}}$ by the quotient $\big(A(N_{X,D_{i}}|_{d_{i}})/K_{i} \big) \ltimes N_{X,D_{i}}|_{d_{i}}$. The corresponding colimit is Morita equivalent to the maximal Hausdorff integration of $T_{X}(-\log D)$. 

Alternatively, we can replace $\pi(N_{X,D_{i}}^{\times}, v_{i})$ by the trivial group, $A(N_{X,D_{i}}|_{d_{i}}) \ltimes N_{X,D_{i}}|_{d_{i}}$ by $\mathbb{C}^{*} \ltimes N_{X,D_{i}}|_{d_{i}}$, and $\Pi(X \setminus D)_{\bar{v}}$ by the groupoid over $\bar{v}$ which has a unique arrow between each pair of points. In this case, the colimit is Morita equivalent to the \emph{twisted pair groupoid} $Pair(X,D)$, which is the minimal integration of $T_{X}(- \log D)$ (see \cite[Section 3.3]{gualtieri2018stokes} for the definition). 
\end{remark}

\begin{remark}
A groupoid linearization is an isomorphism between a Lie groupoid, restricted to a neighborhood of one of its orbits, and an appropriately defined linear model. In the smooth category, Weinstein's trick \cite{weinstein2002linearization, crainic2013linearization} states that a groupoid is linearizable if and only if it is Morita equivalent to a linear model. This result fails in the holomorphic category where the existence of groupoid linearizations is obstructed by the lack of tubular neighborhood embeddings. 
Therefore, Morita equivalences between a groupoid and it's linear models provide the natural replacement for linearizations in the holomorphic category. Proposition \ref{pushoutdecomp} can be interpreted as a linearization result for $\Pi(X,D)$ in a neighbourhood of the orbits $D_{i}$. 
\end{remark}

\subsubsection{Riemann-Hilbert Correspondence} \label{RHsection}
In this section we prove a functorial Riemann-Hilbert classification theorem for logarithmic connections on $(X,D)$ in terms of generalized monodromy data. The Morita equivalence constructed in Section \ref{constructingME} induces, by Proposition \ref{repeq}, an equivalence of categories between $\Rep(\Pi(X,D), G)$ and $\Rep(\cal{N}, G)$. Proposition \ref{pushoutdecomp} implies that a representation of $\cal{N}$ decomposes into a representation of $\Pi(X \setminus D)_{\bar{v}}$ and a collection of representations of $A(N_{X,D_{i}}|_{d_{i}}) \ltimes N_{X,D_{i}}|_{d_{i}}$, subject to compatibility conditions. We can then apply the functors from Theorem \ref{RHforLB} to obtain a final classification result. 

We define a category $F((X,D), G)$ of generalized monodromy data, which we will use to classify logarithmic connections. In order to define this category, we first choose some auxiliary data. Let $\bar{d} = \{ d_{1}, ..., d_{n} \}$ be the collection of chosen points $d_{i} \in D_{i}$. For each point $d_{i}$, choose a path $p_{i} : [0,1] \to X$, such that $p_{i}(0) = d_{i}$, $p_{i}'(0) = v_{i}$, $p_{i}(1) = x_{0}$, and $p_{i}((0,1]) \subset X \setminus D$. Let $U_{i}$ be a tubular neighbourhood of $D_{i}$, and let $\lambda_{i}$ denote the small loop in $U_{i}$ which rotates once around $D_{i}$ in the counterclockwise direction. The path $p_{i}$ induces a homomorphism from $\pi(U_{i} \setminus D_{i})$ to $\pi(X \setminus D, x_{0})$. Denote the image of this homomorphism by $\pi_{i} \subset \pi(X \setminus D, x_{0})$, and denote the image of $\lambda_{i}$ by $l_{i} \in \pi_{i}$. 

The objects of $F((X,D), G)$ are defined to be tuples $(K, \Phi, A, \nu_{0})$, where $K \to \bar{d} \cup \{ x_{0} \}$ is a principal $G$-bundle (i.e. a collection of right $G$-torsors), $\Phi : \pi(X \setminus D, x_{0}) \to \Aut_{G}(K_{x_{0}})$ is a homomorphism, $A = (A_{1}, ..., A_{n})$ is a collection of infinitesimal symmetries $A_{i} \in \frak{aut}_{G}(K_{d_{i}})$ admitting JC decompositions $A_{i} = S_{i} + N_{i}$, and $\nu_{0} = (\nu_{0}^{(1)}, ..., \nu_{0}^{(n)})$ is a collection of torsors, where $\nu_{0}^{(i)} \subset \spc{Hom}_{G}(K_{d_{i}},K_{x_{0}})$ is a right $U_{\bb{N}}(S_{i})$-torsor. These data satisfy the compatibility conditions that $\Phi|_{\pi_{i}} : \pi_{i} \to \text{St}(\nu^{(i)})$ and $\chi_{i}(\Phi(l_{i})) = \exp(2 \pi i A_{i})$, for all $i$. Recall that the subgroups $\text{St}(\nu^{(i)}) \subseteq \Aut_{G}(K_{x_{0}})$ and the homormophisms $\chi_{i} : \text{St}(\nu^{(i)})  \to C_{\Aut_{G}(K_{d_{i}})}(S_{i})$ are defined in Section \ref{Definingthecat}. The morphisms in $F((X,D), G)$ are defined to be morphisms of $G$-bundles which intertwine all of the data. 

\begin{theorem} \label{FinalRH}
Let $X$ be a complex manifold with a smooth hypersurface $D \subset X$. Choose a basepoint $x_{0} \in X \setminus D$, and for each connected component $D_{i} \subseteq D$, choose a path $p_{i} : [0, 1] \to X$, such that $p_{i}(0) = d_{i} \in D_{i}$, $p_{i}'(0) = v_{i} \in N_{X, D_{i}}|_{d_{i}}$ is a non-zero normal vector, $p_{i}(1) = x_{0}$, and $p_{i}((0,1]) \subset X \setminus D$. Then there is an equivalence of categories 
\[
\Rep(T_{X}(-\log D), G) \cong F((X,D), G),
\]
where $\Rep(T_{X}(-\log D), G)$ is the category of flat connections on principal $G$-bundles over $X$ with logarithmic singularities along $D$, and $F((X,D), G)$ is the category of generalized monodromy data defined above. 
\end{theorem}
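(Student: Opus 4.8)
The plan is to chain together the equivalences already established, peeling $\Pi(X,D)$ apart with the Morita equivalence of Section~\ref{constructingME} and then feeding each resulting piece into the local classification of Section~\ref{Firstsection}. First, Lie's second theorem gives $\Rep(T_{X}(-\log D),G)\cong\Rep(\Pi(X,D),G)$, and Proposition~\ref{repeq} applied to the Morita equivalence $Q$ constructed in Section~\ref{constructingME} gives $\Rep(\Pi(X,D),G)\cong\Rep(\cal{N},G)$. By Proposition~\ref{pushoutdecomp}, $\cal{N}$ is the pushout, in holomorphic Lie groupoids, of $\Pi(X\setminus D)_{\bar v}$ and $\coprod_{i}\big(A(N_{X,D_{i}}|_{d_{i}})\ltimes N_{X,D_{i}}|_{d_{i}}\big)$ along $\coprod_{i}\pi(N_{X,D_{i}}^{\times},v_{i})$. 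Since a representation is a homomorphism into a gauge groupoid, the universal property of this colimit upgrades to an equivalence of categories exhibiting $\Rep(\cal{N},G)$ as the $2$-fibre product (pullback of categories)
\[
\Rep(\Pi(X\setminus D)_{\bar v},G)\;\times_{\prod_{i}\Rep(\pi(N_{X,D_{i}}^{\times},v_{i}),G)}\;\prod_{i}\Rep\big(A(N_{X,D_{i}}|_{d_{i}})\ltimes N_{X,D_{i}}|_{d_{i}},\,G\big);
\]
concretely, an object is a representation of the central factor, a representation of each leaf factor, and isomorphisms identifying their restrictions to the edge factors, the coherence being automatic because the diagram is a pushout. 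This is exactly the van Kampen description of $\cal{N}$.

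Next I would identify each factor. Rigidifying a representation of the connected groupoid $\Pi(X\setminus D)_{\bar v}$ --- replacing it, up to canonical isomorphism, by one whose parallel transport along each chosen path $p_{i}$ is the identity --- makes it the same as a right $G$-torsor $K_{x_{0}}$ together with a monodromy homomorphism $\Phi:\pi(X\setminus D,x_{0})\to\Aut_{G}(K_{x_{0}})$, with the fibre over the tangential basepoint $v_{i}$ now literally equal to $K_{x_{0}}$. For each $i$, Lemma~\ref{furthersimplification} (applied to the line bundle $L=N_{X,D_{i}}$) identifies $\Rep(A(N_{X,D_{i}}|_{d_{i}})\ltimes N_{X,D_{i}}|_{d_{i}},G)$ with the category of triples $(P_{i},\Phi_{i},\gamma_{i})$, where $(P_{i},\Phi_{i})$ is a representation of $\mathbb{C}\ltimes N_{X,D_{i}}|_{d_{i}}\cong\mathbb{C}\ltimes\mathbb{A}$ and $\gamma_{i}:\pi(N_{X,D_{i}}^{\times},v_{i})\to\Aut(P_{i},\Phi_{i})$ satisfies $\gamma_{i}(l)=\Phi_{i}(2\pi i,\cdot)$. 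Theorem~\ref{RH} (equivalently Theorem~\ref{RHforLB}) then encodes $(P_{i},\Phi_{i})$ by its monodromy $M_{i}=\Phi_{i}(2\pi i,v_{i})$, its residue $A_{i}\in\frak{aut}_{G}(P_{i}|_{0})$ with JC decomposition $A_{i}=S_{i}+N_{i}$, and the torsor $\nu_{0}^{(i)}\subset\spc{Hom}_{G}(P_{i}|_{0},P_{i}|_{v_{i}})$, subject to $M_{i}\in\text{St}(\nu^{(i)})$ and $\chi_{i}(M_{i})=\exp(2\pi i A_{i})$. Moreover, the proof of Theorem~\ref{RHforLB} shows an automorphism of $(P_{i},\Phi_{i})$ is determined by its value on $P_{i}|_{v_{i}}$, embedding $\Aut(P_{i},\Phi_{i})$ into $\text{St}(\nu^{(i)})\subseteq\Aut_{G}(P_{i}|_{v_{i}})=\Aut_{G}(K_{x_{0}})$.

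It then remains to unwind the gluing condition. After the $p_{i}$-rigidification, the edge map $\pi(N_{X,D_{i}}^{\times},v_{i})\to\Pi(X\setminus D)_{\bar v}$ becomes the composite $\pi(N_{X,D_{i}}^{\times},v_{i})\twoheadrightarrow\pi_{i}\hookrightarrow\pi(X\setminus D,x_{0})$ (recall $\pi_{i}$ is defined as this image), which carries the fibre loop $l$ to $l_{i}$. So the pushout compatibility forces $\gamma_{i}$ to factor as $\Phi|_{\pi_{i}}$ precomposed with the surjection onto $\pi_{i}$; since $\gamma_{i}$ lands in $\Aut(P_{i},\Phi_{i})\subseteq\text{St}(\nu^{(i)})$ this is precisely the requirement that $\Phi|_{\pi_{i}}:\pi_{i}\to\text{St}(\nu^{(i)})$, and evaluating on $l_{i}$ gives $M_{i}=\Phi(l_{i})$, so that the constraint $\chi_{i}(M_{i})=\exp(2\pi i A_{i})$ becomes $\chi_{i}(\Phi(l_{i}))=\exp(2\pi i A_{i})$. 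Collecting the torsors $K_{d_{i}}:=P_{i}|_{0}$ together with $K_{x_{0}}$ into a $G$-bundle $K$ over $\bar d\cup\{x_{0}\}$, the data $(K,\Phi,(A_{i})_{i},(\nu_{0}^{(i)})_{i})$ subject to these conditions is exactly an object of $F((X,D),G)$; tracing morphisms through the same chain of identifications shows the correspondence is an equivalence. Running the whole construction backwards --- applying $\mathcal{R}$ from Theorem~\ref{RH} to each factor and gluing along the pushout --- produces the inverse functor.

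I expect the main obstacle to lie in the first paragraph's bookkeeping: carefully verifying that $\Rep(\cal{N},G)$ really is the $2$-fibre product displayed above, rather than merely that $\cal{N}$ is the underlying colimit of groupoids, and then carrying the chosen paths $p_{i}$ consistently through every subsequent identification so that the abstract pushout compatibility lands on the nose as the stated conditions ``$\Phi|_{\pi_{i}}:\pi_{i}\to\text{St}(\nu^{(i)})$ and $\chi_{i}(\Phi(l_{i}))=\exp(2\pi i A_{i})$''. Everything else is an assembly of results already in hand.
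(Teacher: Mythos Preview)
Your proposal is correct and follows essentially the same route as the paper: reduce to $\Rep(\cal{N},G)$ via the Morita equivalence, use the pushout decomposition of Proposition~\ref{pushoutdecomp} together with Theorem~\ref{RHforLB} to split into local pieces, and then use the chosen paths $p_{i}$ to identify the fibres at $v_{i}$ with the fibre at $x_{0}$. The only detail the paper makes more explicit is that the $p_{i}$ must first be lifted to arrows $\tilde{p}_{i}\in\Pi(X\setminus D)_{\bar v}$ via $t\mapsto(p_{i}(t),t)$ in $Z$, which is precisely what your ``rigidification'' step amounts to.
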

\begin{proof}
From the discussion in the present section, we have an equivalence \[ \Rep(T_{X}(-\log D), G) \cong \Rep(\cal{N}, G).\] It thus remains for us to prove the equivalence between $\Rep(\cal{N}, G)$ and ${F(\pi(X \setminus D,x_{0}), G)}$. By Proposition \ref{pushoutdecomp} and Theorem \ref{RHforLB}, we get an equivalence between $\Rep(\cal{N}, G)$ and a category $\mathcal{C}$ whose objects consist of the data of a principal bundle $K$ over $\bar{d} \cup \{x_{0}, v_{1}, ..., v_{n}\}$, a representation $\Psi$ of $\Pi(X \setminus X)_{\bar{v}}$ on this bundle, a collection of Lie algebra elements $A_{i} \in \frak{aut}_{G}(K_{d_{i}})$, and a collection of torsors $\mu_{0}^{(i)} \subset \spc{Hom}_{G}(K_{d_{i}},K_{v_{i}})$, all subject to appropriate compatibility conditions. 

For each path $p_{i}$, the path $(p_{i}(t), t) : [0,1] \to X \times \bb{C}$ can be lifted to a path $\tilde{p}_{i} \in \Pi(Z, \widetilde{D})$ going from $v_{i} \in N_{X,D_{i}}|_{d_{i}}$ to $(x_{0},1)$. Hence $\tilde{p}_{i} \in \Pi(X \setminus D)_{\bar{v}}$. Conjugating by this path sends the image of $\pi(N_{X,D_{i}}^{\times}, v_{i})$ in $\Pi(X \setminus D)_{\bar{v}}$ onto the subgroup $\pi_{i} \subseteq \pi(X \setminus D, x_{0})$, and the distinguished loop in the fibre $N_{X,D_{i}}^{\times}|_{d_{i}}$ to the element $l_{i} \in \pi_{i}$. Using these paths to identify $K_{v_{i}} \cong K_{x_{0}}$ we then get the desired equivalence between $\mathcal{C}$ and $F((X,D), G)$. 
\end{proof}

 \bibliographystyle{plain}

 \bibliography{bibliography.bib}

\end{document}